\tikzset{>=stealth',
  head/.style = {fill = white, text=black},
  plaque/.style = {draw, rectangle, minimum size = 10mm}, 
  pil/.style={->,thick},
  junct/.style = {draw,circle,inner sep=0.5pt,outer sep=0pt, fill=black}
  }
\newcommand{\g}{\mathfrak{g}}
\newcommand{\G}{\mathfrak{G}}
\newcommand{\xx}{\mathbf{x}}
\newcommand{\HH}{\mathcal{H}}
\newcommand{\inner}[2]{\left\langle #1, #2 \right\rangle}
\newcommand{\iso}{\cong}
\newcommand{\bbb}{\mathsf{b}}
\newcommand{\poly}{\mathrm{Poly}}
\newcommand{\sgn}{\operatorname{sgn}}
\newcommand{\fsl}{\mathfrak{sl}}
\newcommand{\sym}{S}
\newcommand{\imapsto}[1]{\; \overset{b_{#1}}{\longmapsto} \;}  
\DeclareMathOperator{\Supp}{Supp} 
\DeclareMathOperator{\wt}{wt} 
\DeclareMathOperator{\excess}{ex} 
\DeclareMathOperator{\ssyt}{SST} 
\DeclareMathOperator{\svssyt}{SV} 
\DeclareMathOperator{\EYD}{EYD} 
\DeclareMathOperator{\GL}{GL} 
\DeclareMathOperator{\Gr}{Gr} 
\DeclareMathOperator{\rect}{rect} 
\DeclareMathOperator{\Inf}{Inf} 
\newcommand{\ZZ}{\mathbb{Z}}
\newcommand{\CC}{\mathbb{C}}
\newcommand{\bon}{\overline{1}}
\newcommand{\btw}{\overline{2}}
\newcommand{\bth}{\overline{3}}
\newcommand{\bplus}{{\color{blue}+}}
\newcommand{\bminus}{{\color{blue}-}}
\newcommand{\cbone}{{\color{darkred}\mathbf{1}}}
\newcommand{\cbtwo}{{\color{darkred}\mathbf{2}}}
\newcommand{\cbthr}{{\color{darkred}\mathbf{3}}}
\lstdefinelanguage{Sage}[]{Python}
{morekeywords={False,sage,True},sensitive=true}
\definecolor{dblackcolor}{rgb}{0.0,0.0,0.0}
\definecolor{dbluecolor}{rgb}{0.01,0.02,0.7}
\definecolor{dgreencolor}{rgb}{0.2,0.4,0.0}
\definecolor{dgraycolor}{rgb}{0.30,0.3,0.30}
\protected\def\specialmergetwolists{%
  \begingroup
  \@ifstar{\def\cnta{1}\@specialmergetwolists}
    {\def\cnta{0}\@specialmergetwolists}%
}
\def\@specialmergetwolists#1#2#3#4{%
  \def\tempa##1##2{%
    \edef##2{%
      \ifnum\cnta=\@ne\else\expandafter\@firstoftwo\fi
      \unexpanded\expandafter{##1}%
    }%
  }%
  \tempa{#2}\tempb\tempa{#3}\tempa
  \def\cnta{0}\def#4{}%
  \foreach \x in \tempb{%
    \xdef\cnta{\the\numexpr\cnta+1}%
    \gdef\cntb{0}%
    \foreach \y in \tempa{%
      \xdef\cntb{\the\numexpr\cntb+1}%
      \ifnum\cntb=\cnta\relax
        \xdef#4{#4\ifx#4\empty\else,\fi\x#1\y}%
        \breakforeach
      \fi
    }%
  }%
  \endgroup
}
\DeclareDocumentCommand\rpp{ m m g }{
	\foreach \x [count=\s from 1] in {#1}{
	        {\ifnum\s=1
	                \draw (0,-\s)--(\x,-\s);
	                \fi}
	   \draw (0,-\s-1) to (\x,-\s-1);
	   \foreach \y in {0, ..., \x} {\draw (\y,-\s)--(\y,-\s-1);}
	}
	\specialmergetwolists{/}{#1}{#2}\ziplist
	\foreach \x/\y [count=\yi from 1] in \ziplist{
	    \node[anchor=west,font=\small] at (\x,-\yi - .5) {$\y$};
	}
	\IfValueT {#3}
	{\foreach \z [count=\zi from 1] in {#3} {\node[anchor=east,font=\small] at (0,-\zi - .5) {$\z$};}}
	{}
}
\definecolor{darkred}{rgb}{0.7,0,0} 
\newcommand{\defn}[1]{{\color{darkred}\emph{#1}}} 
\theoremstyle{plain}
\newtheorem{thm}{Theorem}[section]
\newtheorem{lemma}[thm]{Lemma}
\newtheorem{conj}[thm]{Conjecture}
\newtheorem{prop}[thm]{Proposition}
\newtheorem{cor}[thm]{Corollary}
\theoremstyle{definition}
\newtheorem{dfn}[thm]{Definition}
\newenvironment{ex}
  {\pushQED{\qed}\examplex}
  {\popQED\endexamplex}
\newtheorem{remark}[thm]{Remark}
\newtheorem{prob}[thm]{Open Problem}
\numberwithin{equation}{section}
\begin{document}
\title[Symmetric Grothendieck crystals]{Crystal structures for symmetric Grothendieck polynomials}

\author[C.~Monical]{Cara Monical}
\address[C.~Monical]{Department of Mathematics, University of Illinois at Urbana-Champaign, Urbana, IL 61801, USA}
\email{caramonicalmath@gmail.com}
\urladdr{https://faculty.math.illinois.edu/~cmonica2/}

\author[O.~Pechenik]{Oliver Pechenik}
\address[O.~Pechenik]{Department of Mathematics, University of Michigan, Ann Arbor, MI 48109, USA}
\email{pechenik@umich.edu}
\urladdr{http://www-personal.umich.edu/~pechenik/}

\author[T.~Scrimshaw]{Travis Scrimshaw}
\address[T.~Scrimshaw]{School of Mathematics and Physics, The University of Queensland, St.\ Lucia, QLD 4072, Australia}
\email{tcscrims@gmail.com}
\urladdr{https://people.smp.uq.edu.au/TravisScrimshaw/}

\keywords{Grothendieck polynomial, crystal, Lascoux polynomial, quantum group, set-valued tableau}
\subjclass[2010]{05E05, 05E10, 17B37, 14M15}

\thanks{OP was partially supported by a National Science Foundation Graduate Research Fellowship and a National Science Foundation Mathematical Sciences Postdoctoral Research
Fellowship (\#1703696).
TS was partially supported by the National Science Foundation RTG grant NSF/DMS-1148634 and the Australian Research Council DP170102648.}

\begin{abstract}
The symmetric Grothendieck polynomials representing Schubert classes in the $K$-theory of Grassmannians are generating functions for semistandard set-valued tableaux. We construct a type $A_n$ crystal structure on these tableaux. This crystal yields a new combinatorial formula for decomposing symmetric Grothendieck polynomials into Schur polynomials. For single-columns and single-rows, we give a new combinatorial interpretation of Lascoux polynomials (K-analogs of Demazure characters) by constructing a K-theoretic analog of crystals with an appropriate analog of a Demazure crystal. We relate our crystal structure to combinatorial models using excited Young diagrams, Gelfand--Tsetlin patterns via the $5$-vertex model, and biwords via Hecke insertion to compute symmetric Grothendieck polynomials.
\end{abstract}

\maketitle

\section{Introduction}
\label{sec:introduction}

The set of $k$-dimensional linear subspaces in $\CC^n$ is known as the Grassmannian $X = \Gr_k(\CC^n)$.  Grassmannians are naturally smooth projective varieties and have been well studied from numerous viewpoints; for background, see, \textit{e.g.},~\cite{Fulton,Gillespie,Kleiman72,Pechenik:thesis,Schubert79} and references therein. One approach to studying the Grassmannian is through its cohomology ring, where a basis of cohomology classes appears as the Poincar\'e duals to the Schubert varieties $X_{\lambda}$ that decompose $X$ into a cell complex. By corresponding the Schubert classes with those Schur polynomials $s_{\lambda}$ whose defining partition $\lambda$ fits inside a $k \times (n-k)$ rectangle, the cohomology $H^*(X)$ is isomorphic to a projection of the algebra of symmetric functions. That is to say, the Schubert structure coefficients of $H^*(X)$ are either Littlewood--Richardson coefficients or $0$. This identification permits the application of many combinatorial tools to the study of Grassmannian Schubert calculus.

Modern Schubert calculus strives for a richer understanding of the Grassmannian through use of generalized cohomology theories, such as K-theory. In the K-theory ring $K(X)$ of algebraic vector bundles over $X$, there is a canonical basis of Schubert classes given by the structure sheaves of the Schubert varieties. As in the usual cohomology, these Schubert classes may be represented by certain polynomials; in this case the symmetric Grothendieck polynomials $\G_{\lambda}$. The combinatorics of K-theoretic Grassmannian Schubert calculus have also been well-studied by numerous authors using diverse tools; \textit{e.g.},~\cite{Buch02,BKSTY08,BS16,GMPPRST16,FK94,IIM17,IS14,Lenart00,LMS16,MS13,MS14,PP16,PS18,PY:genomic,PY18,TY09,Vakil06,WZJ16,Yel17} and references therein.

Remarkably, the product $\G_{\mu} \G_{\lambda}$ of symmetric Grothendieck polynomials  (even in an infinite number of variables) can be written in a unique way as a \emph{finite} sum of other symmetric Grothendieck polynomials $\G_{\nu}$.
There are many known positive combinatorial rules to compute the Schubert structure coefficients $C_{\lambda\mu}^{\nu}$, many of which are natural K-theory analogs of rules for Littlewood--Richardson coefficients. 
In particular, $C_{\lambda\mu}^{\nu}$ is the number of set-valued tableaux of weight $\nu$ with skew shape given by the shapes $\lambda$ and $\mu$ touching at their corners such that a particular reading word is Yamanouchi~\cite{Buch02}. Other such rules have been given in \cite{BS16,LMS16,PP16,PY:genomic,TY09,Vakil06}.

One important interpretation of Schur functions is as characters of the special linear Lie algebra $\fsl_n$.
To study the representation theory of a Kac--Moody Lie algebra $\g$ using combinatorics, M.~Kashiwara introduced the $q \to 0$ limit of representations of the quantum group $U_q(\g)$ in the seminal papers~\cite{K90, K91}. As $q$ corresponds to absolute temperature of certain physical systems and $q \to 0$ corresponds to taking temperature to absolute zero, he coined these bases crystal bases. For $\g = \fsl_n$, this construction gave a natural (algebraic) interpretation to many well-known combinatorial constructions. This includes that coplactic operators are crystal operators (see, \textit{e.g.},~\cite[Ch.~8]{BS17}), Sch\"utzenberger's evacuation involution~\cite{Sch72} is the $q \to 0$ limit of the Lusztig involution on the quantum group~\cite{Lenart07}, and that tableau promotion reflects a Dynkin diagram automorphism for the corresponding affine type $\widehat{\fsl}_n$~\cite{Shimozono02}.

Our aim is to apply M.~Kashiwara's crystal theory to the study of symmetric Grothendieck polynomials.\footnote{We will work in the slight extra generality of the Fomin--Kirillov $\beta$-deformations of symmetric Grothendieck polynomials~\cite{FK94}; geometrically, this corresponds to moving from ordinary K-theory into connective K-theory~\cite{Hudson}.} By work of A.~Buch \cite{Buch02}, $\G_\lambda$ is a generating function for the set $\svssyt^n(\lambda)$ of semistandard set-valued tableaux of shape $\lambda$ with entries at most $n$.
Our main result shows that this combinatorial set carries a $U_q(\fsl_n)$-crystal structure in the sense that it is isomorphic to a direct sum of the crystals $B(\mu)$ of irreducible highest weight representations. 
This implies we can write a symmetric Grothendieck polynomial $\G_{\lambda}$ as a positive sum of Schur functions $s_{\mu}$, where the multiplicities $M_{\lambda}^{\mu}$ are given by counting highest weight (\textit{i.e.} Yamanouchi) semistandard set-valued tableaux. 
C.~Lenart~\cite[Thm.~2.2]{Lenart00}  has given a different combinatorial formula for the multiplicities $M_{\lambda}^{\mu}$ in terms of certain flagged increasing tableaux. Using our crystal  structure and the ``uncrowding'' operation of~\cite{Buch02} (see also~\cite{RTY18}), we further obtain a new proof of Lenart's formula.

The goal of this project was to construct a K-theoretic analog of (combinatorial) crystal theory. Indeed, we believe there are additional K-theory crystal operators such that when added to our $U_q(\fsl_n)$-crystal structure on $\svssyt^n(\lambda)$, we obtain a connected component such that the characters of the K-theory analog of a Demazure crystal are equal to the so-called Lascoux polynomials~\cite{Kirillov:notes,Lascoux01,Monical16,MPS18,RY15}, the K-theory analog of the well-studied key polynomials (or Demazure characters for type $A_n$)~\cite{AS18:Demazure,AS18:Kohnert,Demazure74,Kohnert91,LS90,Mason09,RS95}. Towards this goal, when $\lambda$ is a single row or column, we construct such a K-theory Demazure crystal yielding a new combinatorial interpretation of the associated Lascoux polynomials.

Furthermore, we also expect a suitable notion of a tensor product such that the connected components are uniquely determined by what we call minimal highest weight and so that the multiplicity of $\svssyt^n(\nu)$ in $\svssyt^n(\mu) \otimes \svssyt^n(\lambda)$ gives $C_{\lambda\mu}^{\nu}$. We expect this structure to connect with the column insertion given in~\cite{Buch02} and to provide a K-theory analog of jeu de taquin (K-jdt) on semistandard set-valued tableaux. We make some progress in this direction by introducing a K-jdt for semistandard set-valued tableaux and showing that it is a $U_q(\fsl_n)$-crystal isomorphism. We note that such a tensor product rule would suggest a K-theory analog of the quantum group $U_q(\fsl_n)$ (or would be a consequence of such an algebra), where the characters of certain irreducible representations are symmetric Grothendieck polynomials, and would lead to an algebraic interpretation of the Lascoux polynomials.

Now, we discuss how our results connect with some other related work and how these connections may be useful in the effort to construct K-crystal structures.
Another set of combinatorial objects, known as excited Young diagrams, used to compute symmetric Grothendieck polynomials were independently introduced by T.~Ikeda--H.~Naruse~\cite{IN09} and V.~Kreiman~\cite{Kreiman05} for the usual ($T$-equivariant, where $T$ is a maximal torus) cohomology classes of Schubert varieties and then extended to the K-theory setting in~\cite{GK15,IN13} (see also, \cite{KY04}).
Excited Young diagrams are convenient for computing localizations of the equivariant Schubert classes to torus fixed-points (on set-valued tableaux, this is obtained by flagging conditions) and without localizations appeared in~\cite{KMY09} under guise of reduced pipe dreams.
We show that our crystal structure translated to excited Young diagrams under the natural bijection is a restriction of the elementary excitations of~\cite{IN09} (the Type~$1$ excitations of~\cite{GK15}) when the crystal operator acts in the usual way on semistandard tableaux. We require a modified elementary excitation in order to obtain the full crystal structure on semistandard set-valued tableaux; however, this does not change the resulting set, even when imposing the localization.

Another approach to construct $\G_{\lambda}$ is by using the $5$-vertex model~\cite{GK17,MS13,MS14}.
Configurations of the $5$-vertex model with certain boundary conditions are in natural bijection with Gelfand--Tsetlin (GT) patterns.
This gives an expression for $\G_{\lambda}$ as a sum over GT patterns of a product of single variable Grothendieck polynomials (corresponding to the rows of the GT pattern).
Using the single variable symmetric Grothendieck polynomial, we add a marking to the GT pattern, and thus can write a $\G_{\lambda}$ as a sum over these marked GT patterns.
On the other hand, since GT patterns are naturally in bijection with semistandard tableaux, we can impose a natural crystal structure on the configurations of the $5$-vertex model, as was described explicitly in~\cite{EV17}.
This gives a ``coarse'' crystal structure of $B(\lambda)$ that can compute $\G_{\lambda}$ in analogy to the Tokuyama formulas for Whittaker functions (see, \textit{e.g.},~\cite{BBF11,BBCFG12}).

Let $\G_w$ denote the stable Grothendieck corresponding to a permutation $w$, a K-analog of the Stanley symmetric function $F_w$~\cite{Stanley84} (so $\G_w$ at $\beta = 0$ is $F_w$). One may write $\G_w$ as a sum over decreasing factorizations of words equivalent to $w$ in the $\beta$-deformed $0$-Hecke monoid~\cite{BKSTY08, Lam06}, where at $\beta = 0$ we obtain the nilCoxeter algebra and can express $F_w$ as decreasing factorizations of \emph{reduced} words of $w$. In~\cite{BKSTY08}, Hecke insertion was introduced to relate these decreasing factorizations with semistandard set-valued tableaux via a generalization of Edelman--Greene insertion~\cite{EG87} (for the general bi-word case, see, \textit{e.g.},~\cite[Thm.~4.11]{MS16}).
In~\cite{MS16}, (cyclically) decreasing factorizations of reduced words to compute $F_w$ were given a crystal structure that corresponds to the natural tableaux crystal structure under Edelman--Greene insertion.
To lift to $\G_w$ under Hecke insertion, we again we have to make a modification, but unlike in~\cite{MS16}, we provide an example exhibiting a change that involves more than two decreasing factors. A direct crystal structure on decreasing factorizations would yield a new combinatorial description of the decomposition given by~\cite{FG98} of $\G_w$ into Schur functions as a consequence of our main theorem.

This paper is organized as follows.
In Section~\ref{sec:background}, we recall the necessary background.
In Section~\ref{sec:crystal_set_valued}, we prove our main result: a $U_q(\fsl_n)$-crystal structure on semistandard set-valued tableaux, as well as a new proof of Lenart's Schur expansion formula.
Section~\ref{sec:diagrams} relates our crystal structure to excited Young diagrams and (marked) Gelfand--Tsetlin patterns.
In Section~\ref{sec:hecke}, we compute some examples of our crystal structure on biwords via Hecke insertion.
In Section~\ref{sec:K_jdt}, we define a jeu de taquin on semistandard set-valued tableaux.
In Section~\ref{sec:K_theory}, we construct a K-theory analog of a crystal structure on single-row and single-column semistandard set-valued tableaux and discuss extensions to the general case.

\subsection*{Acknowledgements}

The authors would like to thank Anders Buch, Zachary Hamaker, Rebecca Patrias, Kohei Motegi, Hiraku Nakajima, Vic Reiner, Kazumitsu Sakai, Alexander Yong, and Paul Zinn-Justin for useful discussions.
The authors would like to thank Darij Grinberg, Takeshi Ikeda, Cristian Lenart, Pavlo Pylyavskyy, and Alexander Yong for comments on an earlier draft of this manuscript.
The authors would like to thank Jianping Pan, Wencin Poh, and Anne Schilling for useful discussions and pointing out some typos and errors on an earlier draft of this manuscript.
TS would like to thank the University of Illinois at Urbana--Champaign for its hospitality in December 2015, where this work began.
Part of this work was done while OP was at Rutgers University and TS was at the University of Minnesota.
This work benefited from computations using \textsc{SageMath}~\cite{sage, combinat}.

\section{Background}
\label{sec:background}

In this section, we give background on crystals using semistandard tableaux, Grothendieck polynomials, and the associated combinatorics.

\subsection{Crystals}

Let $\fsl_n$ be the type $A_{n-1}$ Lie algebra (the Lie algebra of traceless $n \times n$ matrices over $\CC$) with index set $I = \{1, 2, \dotsc, n-1\}$, simple roots $\{\alpha_i \mid i \in I\}$, simple coroots $\{h_i \mid i \in I\}$, and fundamental weights $\{\Lambda_i \mid i \in I\}$.
Let $Q = \operatorname{span}_{\ZZ} \{ \alpha_i \mid i \in I \}$ be the root lattice, $Q^{\vee} = \operatorname{span}_{\ZZ} \{ h_i \mid i \in I\}$ be the coroot lattice, and $P = \operatorname{span}_{\ZZ} \{\Lambda_i \mid i \in I\}$ be the weight lattice. We denote the set of dominant weights by $P^+$.
Let $C = (C_{ij})_{i,j \in I}$ be the Cartan matrix, that is the matrix of the isomorphism $Q \to P$ given by
\[
C = \begin{bmatrix}
2 & -1 & & & & \\
-1 & 2 & -1 & & & \\
& \ddots & \ddots & \ddots & &\\
& & -1 & 2 & -1 \\
& & & -1 & 2
\end{bmatrix}
\]
and the canonical pairing $\inner{\cdot}{\cdot} \colon P \times Q^{\vee} \to \ZZ$ given by $\inner{\alpha_i}{h_j} = C_{ij}$. 
Let $U_q(\fsl_n)$ denote the corresponding Drinfeld--Jimbo quantum group.
Let $\sym_n$ denote the symmetric group on $\{1, \dotsc, n\}$ with simple transpositions $\{s_i \mid 1 \leq i < n \}$, where $s_i$ interchanges $i$ and $i+1$.

An \defn{abstract $U_q(\fsl_n)$-crystal} is a nonempty set $B$ together with maps
\begin{align*}
e_i, f_i & \colon B \to B\sqcup\{0\},
\\
\wt & \colon B \to P, 
\end{align*}
which satisfy, for all $i \in I$, the properties
\begin{enumerate}
\item $f_i b = b'$ if and only if $b = e_i b'$ for all $b,b' \in B$;
\item $\wt(f_i b) = \wt(b) - \alpha_i$ for all $b \in B$ such that $f_i b \neq 0$;
\item $\varphi_i(b) = \varepsilon_i(b) + \inner{h_i}{\wt(b)}$;
\end{enumerate}
where $\varepsilon_i, \varphi_i \colon B \to \ZZ_{\geq 0}$ are defined by
\begin{align*}
\varepsilon_i(b) & = \max \{k \mid e_i^k b \neq 0\},
\\ \varphi_i(b) & = \max \{k \mid f_i^k b \neq 0\}.
\end{align*}
\begin{remark}
We define our abstract $U_q(\fsl_n)$-crystals with the additional condition that they are regular (sometimes called seminormal) compared to the standard general axioms. For more details on the general case, we refer the reader to \cite{BS17,K91}.
\end{remark}

The maps $e_i$ and $f_i$, for $i \in I$, are called the \defn{crystal operators} or \defn{Kashiwara operators}.
We can express an entire $i$-string through an element $b \in B$ diagrammatically by
\[
e_i^{\max}b \overset{i}{\longrightarrow}
\cdots \overset{i}{\longrightarrow}
e_i^2 b \overset{i}{\longrightarrow}
e_ib \overset{i}{\longrightarrow}
b \overset{i}{\longrightarrow}
f_ib \overset{i}{\longrightarrow}
f_i^2 b \overset{i}{\longrightarrow}
\cdots \overset{i}{\longrightarrow}
f_i^{\max}b,
\]
where $e_i^{\max} b := e_i^{\varepsilon_i(b)} b$ and $f_i^{\max} b := f_i^{\varphi_i(b)} b$.

An element $u \in B$ is called \defn{highest weight} if $e_i u = 0$ for all $i \in I$. An abstract $U_q(\fsl_n)$-crystal is called a \defn{highest weight crystal} if there exists a highest weight element $u \in B$ and, for each $b \in B$, a finite sequence $(a_1, a_2, \dotsc, a_{\ell})$ such that $b = f_{a_1} f_{a_2} \cdots f_{a_{\ell}} u$.

Let $B_1$ and $B_2$ be two abstract $U_q(\fsl_n)$-crystals.  A \defn{crystal morphism} $\psi \colon B_1 \to B_2$ is a map $B_1\sqcup\{0\} \to B_2 \sqcup \{0\}$ such that
\begin{enumerate}
\item $\psi(0) = 0$;
\item if $b \in B_1$ and $\psi(b) \in B_2$, then $\wt(\psi(b)) = \wt(b)$, $\varepsilon_i(\psi(b)) = \varepsilon_i(b)$, and $\varphi_i(\psi(b)) = \varphi_i(b)$;
\item for $b \in B_1$, we have $\psi(e_i b) = e_i \psi(b)$ provided $\psi(e_ib) \neq 0$ and $e_i\psi(b) \neq 0$;
\item for $b\in B_1$, we have $\psi(f_i b) = f_i \psi(b)$ provided $\psi(f_ib) \neq 0$ and $f_i\psi(b) \neq 0$.
\end{enumerate}
A crystal morphism $\psi$ is called \defn{strict} if $\psi$ commutes with $e_i$ and $f_i$ for all $i \in I$.  Moreover, a crystal morphism $\psi \colon B_1 \to B_2$ is called an \defn{embedding} or \defn{isomorphism} if the induced map $B_1 \sqcup\{0\} \to B_2 \sqcup \{0\}$ is injective or bijective, respectively. If there is an isomorphism between $B_1$ and $B_2$, then we say they are (crystal) isomorphic and write $B_1 \iso B_2$.

We say an abstract $U_q(\fsl_n)$-crystal is simply a \defn{$U_q(\fsl_n)$-crystal} if it is crystal isomorphic to the crystal basis of a $U_q(\fsl_n)$-module. In particular, the highest weight $U_q(\fsl_n)$-module $V(\lambda)$ for $\lambda \in P^+$ has a crystal basis~\cite{K90,K91}. We denote the corresponding $U_q(\fsl_n)$-crystal by $B(\lambda)$ and its highest weight element by $u_{\lambda}$.

The tensor product $B_2 \otimes B_1$ is the crystal whose set is the Cartesian product $B_2\times B_1$ and the crystal structure given by
\begin{align*}
e_i(b_2 \otimes b_1) &= \begin{cases}
e_i b_2 \otimes b_1 & \text{if } \varepsilon_i(b_2) > \varphi_i(b_1), \\
b_2 \otimes e_i b_1 & \text{if } \varepsilon_i(b_2) \le \varphi_i(b_1),
\end{cases} \\
f_i(b_2 \otimes b_1) &= \begin{cases}
f_i b_2 \otimes b_1 & \text{if } \varepsilon_i(b_2) \ge \varphi_i(b_1), \\
b_2 \otimes f_i b_1 & \text{if } \varepsilon_i(b_2) < \varphi_i(b_1),
\end{cases} \\ 
\wt(b_2 \otimes b_1) &= \wt(b_2) + \wt(b_1).
\end{align*}

\begin{remark}
Our convention for tensor products follows~\cite{BS17}, which is opposite the convention given by Kashiwara in~\cite{K91}.
\end{remark}

Consider $U_q(\fsl_n)$-crystals $B_1, \dotsc, B_t$. The action of the crystal operators on the tensor product $B = B_t \otimes \cdots \otimes B_2 \otimes B_1$ can be computed by the \defn{signature rule}.  Let $b = b_t \otimes \cdots \otimes b_2 \otimes b_1 \in B$, and for $i \in I$, we write
\[
\underbrace{\bplus\cdots\bplus}_{\varphi_i(b_t)}\
\underbrace{\bminus\cdots\bminus}_{\varepsilon_i(b_t)}\ 
\cdots\ 
\underbrace{\bplus\cdots\bplus}_{\varphi_i(b_1)}\
\underbrace{\bminus\cdots\bminus}_{\varepsilon_i(b_1)}\ .
\]
Then by successively deleting any $(\bminus,\bplus)$-pairs (in that order) in the above sequence, we obtain a sequence
\[
\sgn_i(b) :=
\underbrace{\bplus\cdots\bplus}_{\varphi_i(b)}\ 
\underbrace{\bminus\cdots\bminus}_{\varepsilon_i(b)}
\ .
\]
Suppose $1 \leq j_\bminus, j_\bplus \leq t$ are such that $b_{j_\bminus}$ contributes the leftmost $\bminus$ in $\sgn_i(b)$ and $b_{j_\bplus}$ contributes the rightmost $\bplus$ in $\sgn_i(b)$.  Then 
\begin{align*}
e_i b &= b_t \otimes \cdots \otimes b_{j_\bminus+1} \otimes e_ib_{j_\bminus} \otimes b_{j_\bminus-1} \otimes \cdots \otimes b_1, \\
f_i b &= b_t \otimes \cdots \otimes b_{j_\bplus+1} \otimes f_ib_{j_\bplus} \otimes b_{j_\bplus-1} \otimes \cdots \otimes b_1.
\end{align*}

\subsection{Crystal structure on semistandard tableaux}

Let $\lambda$ be a dominant integral weight. We systematically identify $\lambda$ with a partition by taking the sequence of indices of its constituent fundamental weights and conjugating the sequence. We further identify partitions with their Young diagrams so that $\Lambda_i$ corresponds to a column of height $i$.
A \defn{semistandard tableau of shape $\lambda$} is a filling of the boxes of $\lambda$ by positive integers, so that 
\begin{itemize}
\item each box contains exactly one number,
\item the entries of each row weakly increase from left to right, and
\item the entries of each column strictly increase from top to bottom.
\end{itemize}
We will use English convention for our Young diagrams and tableaux.
For example,
\[
\ytableausetup{boxsize=1.5em}
\ytableaushort{1113,233,5}
\]
is a semistandard tableau of shape $\lambda = (4,3,1) = \Lambda_1 + 2 \Lambda_2 + \Lambda_3$.
Let $\ssyt^n(\lambda)$ denote the set of all semistandard tableaux of shape $\lambda$ with all entries at most $n$.

Next, we give a simplified version of the usual crystal structure for $\fsl_n$ on $\ssyt^n(\lambda)$. Note that this structure agrees with considering the embedding of $T$ (of shape $\lambda$) into $B(\Lambda_1)^{\otimes |\lambda|}$ by taking either the reverse Far-Eastern reading word (bottom-to-top and left-to-right) or reverse Middle-Eastern reading word (left-to-right and bottom-to-top).\footnote{We will also refer to this simplified rule as the \emph{signature rule} as it precisely corresponds to the signature rule for tensor products if we realize the tableau as a tensor product of single boxes. Note that all three rules produce exactly the same crystal structure.}
Furthermore, this is the crystal basis associated to the highest weight representation $V(\lambda)$ of $U_q(\fsl_n)$.

The crystal operator $f_i$ acts on $T \in \ssyt^n(\lambda)$ as follows: Write $\bplus$ above each column of $T$ containing $i$ but not $i+1$, and write $\bminus$ above each column containing $i+1$ but not $i$. Now cancel signs in ordered pairs $\bminus \bplus$. If every $\bplus$ thereby cancels, then $f_i(T) = 0$. Otherwise $f_i T$ is given by replacing the $i$ corresponding to the rightmost uncanceled $\bplus$ with an $i+1$; note that in this case, $f_i T \in \ssyt^n(\lambda)$. 
The action of $e_i$ is similar: After recording signs and canceling in pairs as above, if every $\bplus$ has been canceled, then $e_i T = 0$. Otherwise $e_i T$ is given by replacing the $i+1$ corresponding to the leftmost uncanceled $\bplus$ with an $i$.
The \defn{weight} $\wt(T)$ of $T \in \ssyt^n(\lambda)$ is the \defn{weak composition} $(a_1, \dotsc, a_n) \in \ZZ_{\geq 0}^n$, where $a_i$ records the number of $i$'s in the tableau $T$.
Note that the highest weight element $u_\lambda$ is the semistandard tableau given by filling the $i$-th row of $\lambda$ from the top with $i$'s.

\begin{ex}
The highest weight tableau of shape $\lambda = (4,2,1) = 2\Lambda_1 + \Lambda_2 + \Lambda_3$ is
\[ \pushQED{\qed}
\ytableaushort{1111,22,3}. \qedhere \popQED
\] \let\qed\relax
\end{ex}

\begin{ex}\label{ex:ordinary_crystal}
For type $A_2$, the crystal graph for $\lambda = (2,1) = \Lambda_1 + \Lambda_2$ is
\[\begin{tikzpicture}[>=latex]
\node (A) {\ytableaushort{11,2}};
\node[above right= 1 and 0.5 of A] (B) {\ytableaushort{12,2}};
\node[below right=1 and 0.5 of A] (C) {\ytableaushort{11,3}};
\node[right=1 of B] (D) {\ytableaushort{13,2}};
\node[right=1 of C] (E) {\ytableaushort{12,3}};
\node[right=1 of D] (F) {\ytableaushort{13,3}};
\node[right=1 of E] (G) {\ytableaushort{22,3}};
\node[below right=1 and 0.5 of F] (H) {\ytableaushort{23,3}};
\path (A) edge[pil,color=blue] node[above left,black]{$1$} (B);
\path (C) edge[pil,color=blue] node[below,black]{$1$} (E);
\path (E) edge[pil,color=blue] node[below,black]{$1$} (G);
\path (F) edge[pil,color=blue] node[above right,black]{$1$} (H);
\path (A) edge[pil,color=red] node[below left,black]{$2$} (C);
\path (B) edge[pil,color=red] node[above,black]{$2$} (D);
\path (D) edge[pil,color=red] node[above,black]{$2$} (F);
\path (G) edge[pil,color=red] node[below right,black]{$2$} (H);
\end{tikzpicture}\]
\end{ex}

Let $w_0 \in \sym_n$ be the reverse permutation $[n, n-1, \cdots, 2, 1]$ (also known as the longest element). The \defn{Lusztig involution} is an involution on a highest weight crystal $B(\lambda)$ defined by sending the highest weight element $u_{\lambda} \in B(\lambda)$ to the lowest weight element $u_{w_0\lambda} \in B(\lambda)$ and extending as a crystal isomorphism by
\begin{align*}
e_i(T^*) & \mapsto (f_{n+1-i} T)^*,
\\ f_i(T^*) & \mapsto (e_{n+1-i} T)^*,
\\ \wt(T^*) & = w_0 \wt(T).
\end{align*}
We extend this involution to a direct sum of crystals by acting independently on each summand.
On a semistandard tableau $T$, the image $T^*$ of the Lusztig involution from the quantum group is equal to the \defn{Sch\"utzenberger involution} (or \defn{evacuation}) of $T$~\cite{Lenart07}.
We can alternatively compute the Sch\"utzenberger involution using the \defn{Bender--Knuth involutions} $t_i$. To obtain the action of $t_i$ on $T$, for each row, interchange the number of $i$'s without an $i+1$ directly below with the number of $i+1$'s without an $i$ directly above (see, \textit{e.g.},~\cite{ECII} for more details).
Indeed by~\cite{BPS16,CGP16,Stanley09}, we can compute the Sch\"utzenberger involution as
\begin{equation}
\label{eq:evac_BK}
T^* = t_1 (t_2 t_1) \cdots (t_{n-1} t_{n-2} \cdots t_2 t_1) T.
\end{equation}

\subsection{Schur and Grothendieck functions}

Let $\xx = (x_1, x_2, \dotsc, )$ be a countable vector of indeterminants.
Importantly, the generating function for semistandard tableaux with a fixed shape is a Schur function. More precisely, the \defn{Schur function} $s_\lambda$ may be defined by
\[
s_\lambda = \sum_{T \in \ssyt^\infty(\lambda)} \xx^{\wt(T)}, 
\]
where $\xx^{a} = x_1^{a_1} \cdots x_k^{a_k}$ for a weak composition $a = (a_1, \dots, a_k)$. Note that if we restrict ourselves to semistandard tableaux with maximum entry $n$ and fixed shape $\lambda$, we obtain the Schur polynomial $s_{\lambda}(x_1, \ldots, x_n )$, which is hence the character of $V(\lambda)$. Moreover, these are the characters of the irreducible polynomial representations of $\GL_n(\CC)$.

Another important appearance of the Schur functions is in the geometry of Grassmannians. Let $X = \Gr_k(\mathbb{C}^n)$ be the \defn{Grassmannian} parametrizing $k$-dimensional linear subspaces of $\mathbb{C}^n$. The defining action of $\GL_n(\mathbb{C})$ on $\mathbb{C}^n$ passes to an action on $X$, which we may restrict to an action by the Borel subgroup $B$ of lower triangular invertible matrices. The $B$-orbits are affine cells called the \defn{Schubert cells} of $X$. Their closures are the \defn{Schubert varieties} $X_\lambda$ and are naturally indexed by certain partitions $\lambda$. In the cohomology $H^\star(X)$ of $X$, the classes $\sigma_\lambda$ of the Schubert varieties are a basis. The role of the Schur polynomials is as polynomial representatives for these cohomology classes. That is, the product of Schur polynomials may be uniquely expressed as a sum of Schur polynomials
\[
s_\lambda \cdot s_\mu = \sum_\nu c_{\lambda, \mu}^\nu s_\nu,
\]
where $c_{\lambda, \mu}^\nu$ also satisfies
\[
\sigma_\lambda \cdot \sigma_\mu = \sum_\nu c_{\lambda, \mu}^\nu \sigma_\nu
\]
in $H^\star(X)$ such that $\nu$ is contained in a $k \times n$ rectangle.

One might desire analogous polynomial representatives for Schubert classes in richer cohomology theories. The symmetric Grothendieck polynomials, described below, play exactly this role with respect to the (connective) K-theory of the Grassmannian. A \defn{semistandard set-valued tableau of shape $\lambda$} is a filling $T$ of the boxes of $\lambda$ by finite sets of positive integers so that
\begin{itemize}
\item each box contains at least one integer,
\item every tableau obtained by deleting all but one integer from each box is a semistandard tableau (in the sense of the previous section).
\end{itemize}
Note that for a set $A$ to the left of a set $B$ in the same row, we have $\max A \leq \min B$, and for $C$ below $A$ in the same column, we have $\max A < \min C$. We will abuse notation by saying an integer $a \in T$ if there exists a box with a set $A \in T$ such that $a \in A$.

Let $\svssyt^n(\lambda)$ denote the set of all semistandard set-valued tableaux of shape $\lambda$ with all entries at most $n$. We also need the \defn{excess} of $T \in \svssyt^n(\lambda)$, defined as
\[
\excess(T) = \sum_{A \in T} \big( |A| - 1 \big).
\]
In other words, the excess of $T$ is the number of extra integers in $T$ compared to a usual semistandard tableau of the same shape. We also have $\excess(T) = |\wt(T)| - |\lambda|$, where $\lambda$ is the shape of $T$.

Following A.~Buch's formulation~\cite{Buch02}, for a partition $\lambda$, we define the \defn{symmetric Grothendieck function} by 
\[
\G_\lambda(\xx; \beta) = \sum_{T \in \svssyt^\infty(\lambda)} \beta^{\excess(T)} \xx^{\wt(T)}.
\]
The \defn{symmetric Grothendieck polynomials} are the finite-variable truncations
\[
\G_\lambda(x_1, \ldots, x_n; \beta) = \sum_{T \in \svssyt^n(\lambda)} \beta^{\excess(T)} \xx^{\wt(T)}.
\]

\begin{remark}
Symmetric Grothendieck functions are often referred to as \defn{stable Grothendieck polynomials} as they are the stable limits as $n \to \infty$ of the original Grothendieck polynomials due to A.~Lascoux--M.-P.~Sch\"{u}tzenberger~\cite{LS82,LS83}. This terminology is somewhat unfortunate, as the stable limits are not polynomials, but rather power series. 

Moreover, strictly speaking, Grothendieck polynomials are usually defined by taking $\beta = -1$. The extension to general $\beta$ is due to S.~Fomin--A.~Kirillov~\cite{FK94}; geometrically, this extension corresponds to enriching ordinary K-theory to connective K-theory~\cite{Hudson}. 
The symmetric $\beta$-Grothendieck polynomials are the main object of this paper; for concision, when it will not cause confusion, we will simply refer to them as Grothendieck polynomials.
\end{remark}

For a Schubert variety $X_\lambda$ with a fixed Bott--Samelson resolution $Y_\lambda$, one obtains a class $[X_\lambda ]$ in the connective K-theory ring of $X$ by pushing forward the Bott--Samelson class, and together these classes form a basis. (By \cite{BE90}, these classes are independent of the choice of Bott--Samelson resolutions.) Thereby define structure coefficients 
\[
[ X_\lambda] \cdot [ X_\mu ] = \sum_\nu C_{\lambda, \mu}^\nu [ X_\nu ].
\]
Then the definition of Grothendieck functions is such that
\[
\G_\lambda \cdot \G_\mu = \sum_\nu C_{\lambda, \mu}^\nu \G_\nu,
\]
for $n, k$ sufficiently large.
When we take $\beta = 0$, we obtain $\G_{\lambda} = s_{\lambda}$, and so the Grothendieck functions are therefore \emph{K-theoretic analogs of Schur functions}. Similarly, the $\beta = -1$ specializations represent the classes of the structure sheaves of the Schubert varieties in the ordinary K-theory of $X$.

There is also a Pieri rule for Grothendieck polynomials.

\begin{thm}[{\cite{Lenart00}}]
Let $\lambda$ be a partition and $\ell \geq 1$. Then, we have
\[
\G_{1^{\ell}} \cdot \G_{\lambda} = \sum_{\nu} \beta^{|\nu / \lambda| - \ell} \binom{c(\nu/\lambda) - 1}{|\nu/\lambda| - \ell} \G_{\nu},
\]
where the sum is over all partitions $\nu \supset \lambda$ such that $\ell(\nu) \leq n$, $\nu/\lambda$ is a vertical strip and $c(\nu/\lambda)$ is the number of non-empty columns in $\nu/\lambda$.
\end{thm}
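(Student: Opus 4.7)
My approach would be to prove the formula bijectively, working directly from the generating-function definitions of $\G_{1^\ell}$ and $\G_\lambda$ and matching them via a K-theoretic enhancement of column RSK.

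First, I would give a canonical parametrization of single-column set-valued tableaux. A tableau $S \in \svssyt^n(1^\ell)$ is determined by its underlying set of entries $E \subseteq \{1, \dotsc, n\}$ together with a set-partition of $E$ into $\ell$ nonempty blocks $A_1, \dotsc, A_\ell$ that are intervals with respect to the induced order on $E$, i.e.\ $\max A_i < \min A_{i+1}$. If $|E| = \ell + e$, the number of such interval partitions is $\binom{\ell + e - 1}{\ell - 1}$ and each contributes $\beta^{e} \, \xx^E$ to $\G_{1^\ell}$. Thus the left-hand side is the generating function over pairs $(S, T) \in \svssyt^n(1^\ell) \times \svssyt^n(\lambda)$ weighted by $\beta^{\excess(S) + \excess(T)} \xx^{\wt(S) + \wt(T)}$.

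Next, I would construct an insertion $(S,T) \mapsto U \in \svssyt^n(\nu)$ by iteratively column Hecke-inserting the blocks $A_\ell, A_{\ell-1}, \dotsc, A_1$ of $S$ into $T$, leveraging the uncrowding framework of~\cite{Buch02} that the paper uses elsewhere. The resulting $\nu$ contains $\lambda$ with $\nu/\lambda$ a vertical strip, and the insertion preserves $\wt$ and total excess (modulo a controllable correction coming from blocks that fuse into an existing set). The inverse of this insertion should recover from $U$ both the tableau $T$ and a marking indicating which cells of $\nu/\lambda$ appeared as genuinely new boxes versus as spillover from multi-element sets that were bumped.

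The heart of the argument, and the main obstacle, is fiber enumeration: for fixed $\nu$ and $U \in \svssyt^n(\nu)$, I must show that exactly $\binom{c(\nu/\lambda) - 1}{|\nu/\lambda| - \ell}$ pairs $(S, T)$ insert to $U$, each contributing a common $\beta^{|\nu/\lambda| - \ell}$ beyond the excess of $U$ itself. The vertical-strip and column-strict conditions force a rigid distribution of the $\ell$ blocks of $S$ across the $c(\nu/\lambda)$ occupied columns of $\nu/\lambda$, with exactly $|\nu/\lambda| - \ell$ degrees of freedom for ``extra'' cells distributed among the $c(\nu/\lambda) - 1$ column boundaries; this yields the binomial coefficient. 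Verifying that the column Hecke insertion is well-defined, bijective onto the predicted image, and exactly preserves $\beta$-excess up to the claimed shift is the chief technical work, but all the needed machinery is a routine K-theoretic upgrade of the analogous argument for the classical Pieri rule $e_\ell \cdot s_\lambda = \sum_{\nu/\lambda \text{ vert.\ strip, } |\nu/\lambda| = \ell} s_\nu$.
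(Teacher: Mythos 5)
There is a genuine gap: your proposal is an outline whose decisive step is deferred rather than proved. The parametrization of $\svssyt^n(1^{\ell})$ by interval set-partitions and the resulting count $\binom{\ell+e-1}{\ell-1}$ are fine, and the $\beta$-degree bookkeeping is automatic once one knows the correspondence is weight-preserving (since $\excess(S)+\excess(T)=|\nu/\lambda|-\ell+\excess(U)$ follows from $\wt(S)+\wt(T)=\wt(U)$). But everything that actually constitutes the theorem is concentrated in the sentence you label ``the heart of the argument'': you never define the block-by-block column insertion precisely (what happens when a block $A_i$ bumps into, or fuses with, a multi-element cell of $T$; what extra recording data makes the map reversible), you do not verify that the resulting shape $\nu$ satisfies the vertical-strip condition, and the fiber count ``exactly $\binom{c(\nu/\lambda)-1}{|\nu/\lambda|-\ell}$ pairs $(S,T)$ map to each $U$'' is asserted, not derived. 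The heuristic about $|\nu/\lambda|-\ell$ degrees of freedom distributed over $c(\nu/\lambda)-1$ column boundaries does not explain why the count is independent of the filling $U$, nor where the $-1$ comes from; as stated it is a restatement of the theorem at the level of fibers. Note also that the paper itself offers no proof to compare against: the statement is quoted from Lenart, whose original argument proceeds by entirely different means (via the expansions relating the Grothendieck and Schur bases), so you cannot lean on the surrounding text to fill the hole.

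If you want to complete a proof along the lines you sketch, the efficient route is not to build a new insertion from scratch but to invoke Buch's Littlewood--Richardson rule for the $\G_\lambda$ (Theorem~5.4 of~\cite{Buch02}, proved by exactly the kind of set-valued column insertion you have in mind, with all well-definedness and bijectivity issues already handled there): specializing $\mu=(1^{\ell})$, the coefficient of $\G_\nu$ becomes the number of set-valued tableaux of a single column of length $\ell$ with prescribed lattice-word/content conditions, and that count reduces to choosing which $|\nu/\lambda|-\ell$ of the $c(\nu/\lambda)-1$ allowable columns of $\nu/\lambda$ carry a doubled entry, giving $\binom{c(\nu/\lambda)-1}{|\nu/\lambda|-\ell}$. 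Carrying out that enumeration explicitly (or reproducing Lenart's basis-change argument) is what is missing from your proposal.
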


In particular, we have
\begin{equation}
\label{eq:grothendieck_pieri_rule}
\G_1 \cdot \G_{\lambda} = \sum_{\nu} \beta^{|\nu/\lambda| - 1} \G_{\nu},
\end{equation}
where we sum over all $\nu$ formed from $\lambda$ by adding a box to addable corners in all possible ways with $\ell(\nu) \leq n$.

\subsection{Key and Lascoux polynomials}

Let $\poly = \mathbb{Z}[x_1, x_2, \dots, x_n]$ denote the ring of polynomials in $n$ variables. We will be interested in several families of polynomials, defined by the action of certain operators on $\poly$. Let the symmetric group $\sym_n$ act on $\poly$ by permuting variables. Recall that the length of an permutation $w$ is the length of the smallest word of $w$ in terms of simple transpositions $s_i$, and we say an expression (in terms of simple transpositions) is reduced if its length is equal to the length of $w$.

For $1 \leq i < n$, the \defn{Newton (divided difference) operator} $\partial_i$ acts on $\poly$ by 
\[ \partial_i f = \frac{f - s_i f}{x_i - x_{i+1}} ,\]
for $f \in \poly$. 
Then the \defn{Demazure operator} $\pi_i$ acts by
\[
\pi_i f = \partial_i(x_i \cdot f) = \frac{x_i \cdot f - x_{i+1} \cdot s_i f}{x_i - x_{i+1}},
\]
while the \defn{Demazure--Lascoux operator} $\varpi_i$ acts by
\begin{align*}
\varpi_i f 
= \pi_i\bigl( (1  + \beta x_{i+1}) \cdot f \bigr) = \partial_i\bigl( (x_i + \beta x_i x_{i+1}) \cdot f \bigr)
= \pi_i f + \beta \pi_i (x_{i+1} \cdot f).
\end{align*}
One may check that all three of these sets of operators satisfy the braid relations:
\begin{align*}
\partial_i \partial_j &= \partial_j \partial_i \hspace{1.5cm} \text{for $|i-j| > 1$}, \\
\partial_i \partial_{i+1} \partial_i &= \partial_{i+1} \partial_i \partial_{i+1},
\end{align*} 
and analogously for $\pi_i$ and $\varpi_i$. Hence for any permutation $w \in \sym_n$, one may unambiguously define $\partial_w := \partial_{s_1} \partial_{s_2} \cdots \partial_{s_k}$, where $s_1 s_2 \cdots s_k$ is any reduced expression for $w$, and similarly for $\pi_w$ and $\varpi_w$.

The \defn{key polynomials} $\{ \kappa_a \}$ (also known as \defn{Demazure characters} or \defn{standard bases}) are a family of polynomials (indexed by weak compositions) introduced by M.~Demazure~\cite{Demazure74}. Important early work on these polynomials includes~\cite{LS90,RS95}. They are defined by 
\[
\kappa_{a} = \pi_{w(a)} \mathbf{x}^{{\sf sort}(a)},
\]
where ${\sf sort}(a)$ is the unique partition obtained by permuting the $a_i$'s into weakly decreasing order and $w(a) \in \sym_n$ is the minimal length permutation that permutes $a$ to ${\sf sort}(a)$.
Moreover, Demazure characters are the characters of Demazure modules~\cite{Demazure74}, certain $U_q^+(\g)$-modules that are known to admit crystal bases~\cite{K93,L95-3} and are closely related to Schubert classes for the cohomology of the flag variety. Indeed, for a fixed reduced expression $w(a) = s_{i_1} \dotsm s_{i_{\ell}}$ we have
\[
\kappa_{a}(\xx) = \sum_{b \in B_w(\lambda)} \wt(b),
\]
where $\lambda = {\sf sort}(a)$ and the sum is over the \defn{Demazure crystal}
\[
B_w(\lambda) := \{ b \in B(\lambda) \mid e_{i_{\ell}}^{\max} \cdots e_{i_1}^{\max} b = u_{\lambda} \}.
\]

More mysterious are the polynomials
\[
L_a(\xx; \beta) = \varpi_{w(a)} \xx^{{\sf sort}(a)}.
\]
These polynomials have been studied recently by C.~Ross--A.~Yong~\cite{RY15}, A.~Kirillov~\cite{Kirillov:notes}, C.~Monical~\cite{Monical16}, and C.~Monical--O.~Pechenik--D.~Searles \cite{MPS18}. We follow~\cite{Monical16} in referring to the $L_a$'s as \defn{Lascoux polynomials}, in honor of A.~Lascoux who essentially introduced them in~\cite{Lascoux01}. Note that the specialization at $\beta = 0$ is  $L_a(\xx; 0) = \kappa_{a}(\xx)$. Algebraically, the relation between the Lascoux and key polynomials is precisely analogous to that between the Grothendieck and Schur functions in the sense that
\[
s_{\lambda}(\xx) = \pi_{w_0} \xx^{\lambda},
\hspace{80pt}
\G_{\lambda}(\xx; \beta) = \varpi_{w_0} \xx^{\lambda}.
\]
Hence we can think of the Lascoux polynomials as K-theoretic analogs of the key polynomials, although no such geometric or representation-theoretic interpretation is currently known. We think of the results of Section~\ref{sec:K_theory} as potential progress towards such a representation-theoretic interpretation.

\section{Crystal structure of semistandard set-valued tableaux}
\label{sec:crystal_set_valued}

We define an abstract $U_q(\fsl_n)$-crystal structure on $\svssyt^n(\lambda)$.

\begin{dfn}
The action of $f_i$ on $\svssyt^n(\lambda)$ is defined exactly as for the usual semistandard tableaux unless $i \in \bbb^{\rightarrow}$, where $\bbb^{\rightarrow}$ is the box immediately to the right of the box $\bbb$ that corresponds to the rightmost uncanceled $\bplus$. In this case, $f_i(T)$ is given by removing $i$ from $\bbb^{\rightarrow}$ and adding $i + 1$ to $\bbb$. Recall that if $i \notin \bbb^{\rightarrow}$, then we simply replace $i$ with $i+1$ in $\bbb$.

The action of $e_i$ is the reverse: We define the action of $e_i$ exactly as for the usual semistandard tableaux unless $i+1 \in \bbb^{\leftarrow}$, where $\bbb^{\leftarrow}$ is the box immediately to the left of the box $\bbb$ that corresponds to the leftmost uncanceled $\bminus$. In this case, $e_i(T)$ is given by removing $i+1$ from $\bbb^{\leftarrow}$ and adding $i$ to $\bbb$. Recall that if $i+1 \notin \bbb^{\leftarrow}$, then we simply replace $i+1$ with $i$ in $\bbb$.
\end{dfn}

\begin{lemma}
\label{lem:crystal_well_defined}
The action of $e_i$ and $f_i$ is well-defined. In particular, if $T \in \svssyt^n(\lambda)$ and $i < n$, then $f_i(T) = 0$ or $f_i(T) \in \svssyt^n(\lambda)$ and similarly for $e_i$.
\end{lemma}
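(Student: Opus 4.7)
The plan is to verify in each branch of the definition of $f_i$ that the set-valued semistandard conditions are preserved; the case of $e_i$ is the horizontal mirror after swapping the roles of $i$ and $i+1$. If the signature admits no uncanceled $\bplus$, the standard rule returns $0$, and there is nothing to check. Otherwise, let $\bbb$ be the unique box in the column supplying the rightmost uncanceled $\bplus$; this column contains $i$ (inside $\bbb$) and no $i+1$.

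In Case 1 ($\bbb$ is the last box in its row, or $i \notin \bbb^{\rightarrow}$), the result differs from $T$ only in $\bbb' := (\bbb \setminus \{i\}) \cup \{i+1\}$. Since $\min \bbb' \geq \min \bbb$, the constraints with the boxes above $\bbb$ and to the left of $\bbb$ are automatic. For the box directly below $\bbb$, its minimum exceeds $\max \bbb \geq i$, and cannot equal $i+1$ because the column of $\bbb$ has no $i+1$, hence is $\geq i+2$; this dominates $\max \bbb'$, which equals either $\max \bbb$ (already $\geq i+2$ when $\max \bbb > i$) or $i+1$ (when $\max \bbb = i$). For the box to the right (when present), $\min \bbb^{\rightarrow} \geq \max \bbb \geq i$ and $i \notin \bbb^{\rightarrow}$ give $\min \bbb^{\rightarrow} \geq i+1 \geq \max \bbb'$.

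Case 2 ($i \in \bbb^{\rightarrow}$) is the crux. Set $\bbb' := \bbb \cup \{i+1\}$ and $(\bbb^{\rightarrow})' := \bbb^{\rightarrow} \setminus \{i\}$. The row inequality $\max \bbb \leq \min \bbb^{\rightarrow}$ combined with $i \in \bbb \cap \bbb^{\rightarrow}$ pins $\max \bbb = \min \bbb^{\rightarrow} = i$. The main obstacle is showing $(\bbb^{\rightarrow})' \neq \emptyset$, i.e., $\bbb^{\rightarrow} \neq \{i\}$. Assume for contradiction that $\bbb^{\rightarrow} = \{i\}$. If the column of $\bbb^{\rightarrow}$ lacks $i+1$, it contributes a $\bplus$ immediately right of $\bbb$'s; since no $\bminus$ lies between and $\bbb$'s $\bplus$ is uncanceled, $\bbb^{\rightarrow}$'s $\bplus$ would remain uncanceled and strictly to the right, contradicting the choice of $\bbb$. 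Hence the column of $\bbb^{\rightarrow}$ contains $i+1$; entries above $\bbb^{\rightarrow}$ are strictly less than $i$ and minima of boxes below $\bbb^{\rightarrow}$ are strictly increasing, so the $i+1$ must sit in the box directly beneath $\bbb^{\rightarrow}$. By left-justification of Young diagrams, the box directly beneath $\bbb$ also exists; its minimum exceeds $\max \bbb = i$ and cannot equal $i+1$ (column of $\bbb$ excludes it), so is $\geq i+2$, forcing its max to be $\geq i+2$ and violating the row condition against its right neighbor whose min is $i+1$.

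With non-emptiness established, the remaining conditions are quick: $\max \bbb' = i+1 = \min (\bbb^{\rightarrow})'$ gives the horizontal condition across the altered pair; $\min \bbb' = \min \bbb$ together with $\min (\bbb^{\rightarrow})' \geq \min \bbb^{\rightarrow}$ and $\max (\bbb^{\rightarrow})' \leq \max \bbb^{\rightarrow}$ take care of the vertical relations and remaining horizontal ones; the column condition below $\bbb'$ reuses the $\min \geq i+2$ bound from Case 1. The argument for $e_i$ mirrors this, with the critical non-emptiness statement being $\bbb^{\leftarrow} \neq \{i+1\}$, ruled out by the symmetric row-condition conflict in the row immediately above.
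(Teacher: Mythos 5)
Your proof is correct and takes essentially the same route as the paper: the same case split on whether $i \in \bbb^{\rightarrow}$, the same signature observation that the column of $\bbb^{\rightarrow}$ cannot contribute a $\bplus$ and hence contains $i+1$, and the same row/column analysis forcing that $i+1$ adjacent to $\bbb^{\rightarrow}$ -- the paper merely packages the crux as proving $i+1 \in \bbb^{\rightarrow}$ outright (which gives both nonemptiness and $\min(\bbb^{\rightarrow}\setminus\{i\}) = i+1$), whereas you prove only $\bbb^{\rightarrow}\neq\{i\}$ by contradiction. Two of your stated bounds are stronger than what you justify (the claim $i+1 \geq \max \bbb'$ in Case 1, which fails when $\max\bbb \geq i+2$, and the equality $\min(\bbb^{\rightarrow})' = i+1$ in Case 2), but in both spots the weaker facts you already have ($\min \bbb^{\rightarrow} \geq \max\bbb$, and $\min(\bbb^{\rightarrow})' \geq i+1$ since $\min\bbb^{\rightarrow} = i$) suffice, so these are only cosmetic slips.
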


\begin{proof}
We consider the action of $f_i$.
We may assume that not every $\bplus$ cancels. Hence, there is a rightmost uncanceled $\bplus$, corresponding to box $\bbb$. Since $\bbb$ corresponds to an uncanceled $\bplus$, we have $i \in \bbb$ and $i+1 \notin \bbb$. 

Suppose $i \notin \bbb^\rightarrow$. Then we replace $i \in \bbb$ by $i+1 \in \bbb$ to form $f_i(T)$. We must check that the result is a semistandard set-valued tableau. Since $T \in \svssyt^n(\lambda)$, $i$ is strictly greater than all integers above $\bbb$ in its column and strictly less than all integers below $\bbb$ in its column. Since $i+1$ does not appear in $\bbb$'s column in $T$, it follows that $i+1$ is strictly greater than all integers above $\bbb$ in its column and strictly less than all integers below $\bbb$ in its column. Similarly, it is clear that $i+1$ is strictly greater than all integers appearing left of $\bbb$ in its row. Since by assumption $i \notin \bbb^\rightarrow$, every integer appearing right of $\bbb$ in its row is strictly greater than $i$, and thus weakly greater than $i+1$. Thus $f_i(T) \in \svssyt^n(\lambda)$.

Otherwise $i \in \bbb^\rightarrow$.  Since $\bbb$ corresponds to the rightmost uncanceled $\bplus$, there cannot be another $\bplus$ over the column of $\bbb^\rightarrow$. Hence $i+1$ appears in that column. By semistandardness, it must appear weakly below $\bbb^\rightarrow$. If it appears strictly below $\bbb^\rightarrow$, then $\bbb^\downarrow$ is nonempty and every integer in $\bbb^\downarrow$ is strictly greater than $i$ and weakly less than $i+1$. Hence $i+1 \in \bbb^\downarrow$, contradicting that there is a $\bplus$ over its column. Thus $i+1 \in \bbb^\rightarrow$.

Now we form $f_i(T)$ be deleting $i$ from $\bbb^\rightarrow$ and adding $i+1$ to $\bbb$. Since $i+1 \in \bbb^\rightarrow$, this leaves $\bbb^\rightarrow$ nonempty. It remains to check that $\bbb$ satisfies the semistandardness conditions in $f_i(T)$. Column strictness is argued exactly as in the previous case. It is clear that $i+1$ is strictly greater than all integers appearing left of $\bbb$ in its row. Since $T$ has $i+1 \in \bbb^\rightarrow$ and $f_i(T)$ has $i \notin \bbb^\rightarrow$, it follows that $i+1$ is weakly less than all integers appearing right of $\bbb$ in its row in $f_i(T)$. Thus $f_i(T) \in \svssyt^n(\lambda)$.

The proof that $e_i$ is well-defined is exactly dual to the proof for $f_i$.
\end{proof}

It is clear that $f_i T = T'$ if and only if $T = e_i T'$ for all $T,T' \in \svssyt^n(\lambda)$ by the signature rule.
We note that for $f_i$, if $i \in \bbb^{\rightarrow}$, we must also have $i+1 \in \bbb^{\rightarrow}$. Likewise, for $e_i$, if $i+1 \in \bbb^{\leftarrow}$, we must also have $i \in \bbb^{\leftarrow}$.

As for semistandard tableaux, define $\wt(T) = \sum_{i \in I} c_i \epsilon_i$, where $c_i$ is the number of boxes $A \in T$ such that $i \in A$. Thus, it is clear that the weight changes by $-\alpha_i$ when applying $f_i$. From the signature rule, we have that $\varphi_i(T) = \varepsilon_i(T) + \langle h_i, \wt(T) \rangle$. Hence, we obtain the following.

\begin{prop}
\label{prop:set_valued_crystal}
The set $\svssyt^n(\lambda)$ is an abstract $U_q(\fsl_n)$-crystal. \qed
\end{prop}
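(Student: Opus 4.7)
The plan is to verify each of the three defining axioms of an abstract $U_q(\fsl_n)$-crystal directly, relying throughout on the key structural observation that column-strictness of semistandard set-valued tableaux forces each integer $i$ to appear in at most one box per column. Lemma~\ref{lem:crystal_well_defined} already guarantees that $e_i$ and $f_i$ preserve $\svssyt^n(\lambda)\sqcup\{0\}$, so only the axioms remain.

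First, for the inverse relation $f_i T = T'$ iff $T = e_i T'$, I would split into the two defining cases in parallel. In the simple-replacement case ($i\notin\bbb^\rightarrow$), $f_i$ flips $\bbb$'s column from a $\bplus$-column to a $\bminus$-column, and applying $e_i$ to the result recognizes this as the leftmost uncanceled $\bminus$, with $i\notin\bbb^\leftarrow$ (unchanged), and reverses the replacement. In the set-valued case ($i\in\bbb^\rightarrow$), $f_i$ converts $\bbb$'s column into a canceled column and $\bbb^\rightarrow$'s column (which retains $i+1$ by the Lemma~\ref{lem:crystal_well_defined} argument) into a $\bminus$-column; then $e_i$ on the result sees $i+1\in\bbb^\rightarrow$ with $i\in(\bbb^\rightarrow)^\leftarrow = \bbb$, triggering precisely the reverse set-valued move. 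The symmetric check starting from $e_i$ is dual.

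Second, in either case the action of $f_i$ removes exactly one $i$ and inserts exactly one $i+1$ into the multiset of entries, so $\wt(f_i T) = \wt(T) - \epsilon_i + \epsilon_{i+1} = \wt(T) - \alpha_i$. For the third axiom, the column-uniqueness observation gives that the total number of $\bplus$'s equals $c_i - d$ and the total number of $\bminus$'s equals $c_{i+1}-d$, where $d$ counts columns containing both $i$ and $i+1$. Since each cancellation of a $(\bminus,\bplus)$-pair removes one symbol of each sign, the difference is preserved through the reduction, and hence (uncanceled $\bplus$'s) minus (uncanceled $\bminus$'s) equals $c_i - c_{i+1} = \langle h_i, \wt(T)\rangle$, provided we can identify $\varphi_i(T)$ and $\varepsilon_i(T)$ with these uncanceled counts.

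The main obstacle is precisely this last identification, namely that $\varphi_i(T) = \max\{k : f_i^k T \neq 0\}$ really equals the uncanceled $\bplus$-count. The point is to verify that one application of $f_i$ decreases the uncanceled $\bplus$-count by exactly one. In the simple-replacement case the affected column toggles sign; in the set-valued case one column's $\bplus$ is absorbed into a canceled state while the adjacent column simultaneously acquires a $\bminus$. A standard property of the parenthesis-matching procedure (confluence of the cancellation rule) then ensures that the affected sign in both cases is the rightmost uncanceled $\bplus$, so the reduced signature passes from $\bplus^{\varphi_i}\bminus^{\varepsilon_i}$ to $\bplus^{\varphi_i - 1}\bminus^{\varepsilon_i + 1}$, completing the verification.
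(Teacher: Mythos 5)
Your proposal is correct and takes essentially the same route as the paper: the paper likewise reduces everything to Lemma~\ref{lem:crystal_well_defined} plus the signature rule, merely recording the key observation that $i \in \bbb^{\rightarrow}$ forces $i+1 \in \bbb^{\rightarrow}$ (and dually for $e_i$) and leaving the sign-counting bookkeeping you spell out implicit. One minor slip worth fixing: in your simple-replacement case the condition governing $e_i$ on $f_iT$ is $i+1 \notin \bbb^{\leftarrow}$ (which holds by row semistandardness), not $i \notin \bbb^{\leftarrow}$ (which can fail, e.g.\ when $\min \bbb = i = \max \bbb^{\leftarrow}$); this does not affect the validity of the argument.
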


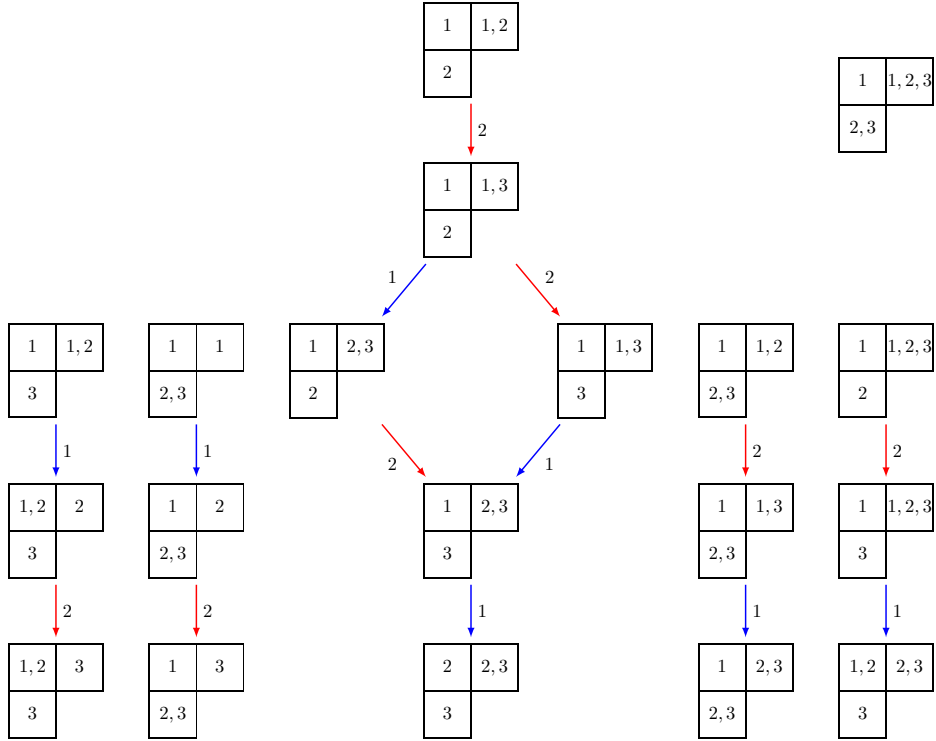
\begin{figure}
\[
\ytableausetup{boxsize=2.5em}
\scalebox{0.7}{
\begin{tikzpicture}[>=latex]
\node (A) {\ytableaushort{1{1,2},3}};
\node[below = 1 of A] (B) {\ytableaushort{{1,2}2,3}};
\node[below = 1 of B] (C) {\ytableaushort{{1,2}3,3}};
\path (A) edge[pil,color=blue] node[right,black]{$1$} (B);
\path (B) edge[pil,color=red] node[right,black]{$2$} (C);
\end{tikzpicture}
\hspace{4mm}
\begin{tikzpicture}[>=latex]
\node (A) {\ytableaushort{11,{2,3}}};
\node[below = 1 of A] (B) {\ytableaushort{12,{2,3}}};
\node[below = 1 of B] (C) {\ytableaushort{13,{2,3}}};
\path (A) edge[pil,color=blue] node[right,black]{$1$} (B);
\path (B) edge[pil,color=red] node[right,black]{$2$} (C);
\end{tikzpicture}
\hspace{4mm}
\begin{tikzpicture}[>=latex]
\node (A) {\ytableaushort{1{1,2},2}};
\node[below = 1 of A] (B) {\ytableaushort{1{1,3},2}};
\node[below left = 1 and 0.5 of B] (C) {\ytableaushort{1{2,3},2}};
\node[below right = 1 and 0.5 of B] (D) {\ytableaushort{1{1,3},3}};
\node[below right = 1 and 0.5 of C] (E) {\ytableaushort{1{2,3},3}};
\node[below = 1 of E] (F) {\ytableaushort{2{2,3},3}};
\path (A) edge[pil,color=red] node[right,black]{$2$} (B);
\path (B) edge[pil,color=red] node[above right,black]{$2$} (D);
\path (C) edge[pil,color=red] node[below left,black]{$2$} (E);
\path (B) edge[pil,color=blue] node[above left,black]{$1$} (C);
\path (D) edge[pil,color=blue] node[below right,black]{$1$} (E);
\path (E) edge[pil,color=blue] node[right,black]{$1$} (F);
\end{tikzpicture}
\hspace{4mm}
\begin{tikzpicture}[>=latex]
\node (A) {\ytableaushort{1{1,2},{2,3}}};
\node[below = 1 of A] (B) {\ytableaushort{1{1,3},{2,3}}};
\node[below = 1 of B] (C) {\ytableaushort{1{2,3},{2,3}}};
\path (A) edge[pil,color=red] node[right,black]{$2$} (B);
\path (B) edge[pil,color=blue] node[right,black]{$1$} (C);
\end{tikzpicture}
\hspace{4mm}
\begin{tikzpicture}[>=latex]
\node (A) {\ytableaushort{1{1,2,3},2}};
\node[below = 1 of A] (B) {\ytableaushort{1{1,2,3},3}};
\node[below = 1 of B] (C) {\ytableaushort{{1,2}{2,3},3}};
\node[above = 3 of A] (extra) {\ytableaushort{1{1,2,3},{2,3}}};
\path (A) edge[pil,color=red] node[right,black]{$2$} (B);
\path (B) edge[pil,color=blue] node[right,black]{$1$} (C);
\end{tikzpicture}
}
\]
\caption{The $U_q(\fsl_3)$-crystal structure on $\svssyt^3(\lambda) \setminus \ssyt^3(\lambda)$, with $\lambda = (2,1)$. (For comparison, the $U_q(\fsl_3)$-crystal structure on $\ssyt^3(\lambda)$ appears in Example~\ref{ex:ordinary_crystal}.)}
\label{fig:svt_crystal_ex}
\end{figure}

\begin{ex}\label{ex:set-valued_crystal}
For $\lambda = (2,1)$, the crystal $\svssyt^3(\lambda)$ has seven components. One component is illustrated in Example~\ref{ex:ordinary_crystal}; the other six components are shown in Figure~\ref{fig:svt_crystal_ex}.
\end{ex}

\begin{prop}
\label{prop:single_row}
Fix some $s \in \ZZ_{>0}$, $k \in \{1, \dotsc, n\}$, and $\mu = \Lambda_k + (s-1) \Lambda_1$.
We have $\svssyt^n_{\mu}(s\Lambda_1) \iso B(\mu)$, where $\svssyt^n_{\mu}(s\Lambda_1) \subseteq \svssyt^n(s\Lambda_1)$ is the closure under $e_i$ and $f_i$, for all $i \in I$, of the $1 \times s$ tableau
\[
U_{\mu} = 
\begin{array}{|c|c|c|c|c|}
\hline
1 & 1 & \cdots & 1 & 1, \ldots, k
\\\hline
\end{array}\ .
\]
\end{prop}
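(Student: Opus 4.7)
My plan is to build a weight-preserving, crystal-structure-preserving bijection, a version of ``uncrowding'' in the single-row case, between $\svssyt^n_\mu(s\Lambda_1)$ and the semistandard hook tableaux $\ssyt^n\bigl((s, 1^{k-1})\bigr) = B(\mu)$, noting that $\mu$ corresponds as a partition to the hook $(s, 1^{k-1})$.

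I would begin by directly verifying that $U_\mu$ has weight $\mu$ and is a highest weight element. The weight count is immediate from the description. For each $i \in I$, a signature calculation shows $\varepsilon_i(U_\mu) = 0$: an interior box $\{1\}$ contributes $\bplus$ only when $i=1$ and never $\bminus$; the terminal box $\{1, \ldots, k\}$ has both $i$ and $i+1$ for $1 \le i < k$ (no sign), contributes a lone $\bplus$ for $i=k$, and is silent for $i > k$.

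The heart of the construction is the uncrowding map $\Phi$. Given $T = A_1 A_2 \cdots A_s$ of excess $k-1$, set $u_a = \min A_a$ and let $e_1 < e_2 < \cdots < e_{k-1}$ denote the non-minimal entries (the ``extras'') in sorted order. Define $\Phi(T)$ to be the hook tableau of shape $(s, 1^{k-1})$ with top row $u_1, \ldots, u_s$ and left column below the top-left cell $e_1, \ldots, e_{k-1}$. I would verify: the extras are pairwise distinct (if $e$ lay in both $A_b$ and $A_{b'}$ with $b < b'$, then $e = \max A_b \le \min A_{b'} < e$, a contradiction); the leftmost column of $\Phi(T)$ is strictly increasing because $u_1 < e_1$ and extras are distinct; and the inverse places each $e_j$ in the unique box $A_a$ with $u_a < e_j \leq u_{a+1}$ (taking $u_{s+1} = \infty$). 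Thus $\Phi$ is a weight-preserving bijection onto $\ssyt^n\bigl((s, 1^{k-1})\bigr) = B(\mu)$, and $\Phi(U_\mu)$ is the standard highest-weight tableau of this hook shape.

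The technical heart is to show $\Phi$ intertwines $e_i$ and $f_i$. A useful structural lemma is that in a single-row set-valued tableau the $\bplus$-boxes for $f_i$ strictly precede the $\bminus$-boxes: if boxes $a < a'$ contributed $\bminus$ and $\bplus$ respectively, then $i+1 \le \max A_a \le \min A_{a'} \le i$, a contradiction. Hence the signature of $T$ admits no cancellations. In $\Phi(T)$, the signs coming from the first column (which accumulates all the extras) may cancel with signs from subsequent columns, but a case analysis (splitting on whether $i$, $i+1$, or both lie in $\{u_1\} \cup \{e_1, \ldots, e_{k-1}\}$) shows that after cancellation the resulting signature agrees with that of $T$. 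Moreover the ``bumping'' version of $f_i$ on $T$ (which removes $i$ from the box to the right of the active box and adds $i+1$ to the active box) corresponds under $\Phi$ to $f_i$ acting inside the leftmost column of the hook, while the non-bumping $f_i$ corresponds to $f_i$ acting within the top row; $e_i$ is handled dually. The main obstacle is the case analysis in verifying this operator compatibility; the modified set-valued rule was designed precisely so that when an extra equal to $i$ or $i+1$ interacts with the minimums $u_a$, the action aligns with the standard action on the hook tableau. Once $\Phi$ is a crystal isomorphism, the connected component $\svssyt^n_\mu(s\Lambda_1)$ containing $U_\mu$ maps isomorphically onto the connected component of $\Phi(U_\mu)$ in $B(\mu)$, which is all of $B(\mu)$, completing the proof.
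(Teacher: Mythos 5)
Your overall strategy is sound, and it is in fact the variant the paper itself records in Remark~\ref{remark:hook_shape}: your map $\Phi$ (minima along the top row, the sorted extras down the first column) is exactly the map $\Psi_\mu$ there, whereas the paper's own proof of Proposition~\ref{prop:single_row} instead sends $T$ to a pair (row of the maxima of the first $s-1$ boxes) $\otimes$ (column of all remaining entries) inside $B\bigl((s-1)\Lambda_1\bigr)\otimes B(\Lambda_k)$, where the signature comparison is a little cleaner because the tensor-product signature rule does the bookkeeping. Your bijection itself is correct (distinctness of the extras, the inverse via $u_a < e_j \leq u_{a+1}$, weight preservation, and the image of $U_\mu$ are all fine), as is your observation that the single-row signature has no $\bminus\bplus$ cancellations while the hook may have cancellations involving column~$1$.

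However, the sentence that is supposed to be the outcome of your case analysis is wrong: it is not true that the bumping $f_i$ corresponds to $f_i$ acting in the leftmost column of the hook and the non-bumping $f_i$ to the top row. Take $T = \{1\}\,\{2\}\,\{2,3\}$ and $i=2$. The rightmost uncanceled $\bplus$ is the second box and $2$ lies in the box to its right, so $f_2$ is a bumping move, giving $\{1\}\,\{2,3\}\,\{3\}$. Under $\Phi$ the hook goes from top row $(1,2,2)$ with column $(1,3)$ to top row $(1,2,3)$ with column $(1,3)$: the \emph{top row} changed, not the column (and on the hook side the $\bminus$ from column~$1$ cancels a $\bplus$ from column~$2$, so $f_2$ indeed acts on column~$3$ of the hook). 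The correct dichotomy is different: a bumping move always replaces the minimum $u_{a+1}=i$ of the bumped box by $i+1$ while the multiset of extras is unchanged (the extra $i+1$ just migrates from box $a+1$ to box $a$), so it always changes the top row of $\Phi(T)$; a non-bumping move changes the top row when the replaced $i$ is the minimum of its box and changes the column exactly when the replaced $i$ is a non-minimal (extra) entry. Since the intertwining of $\Phi$ with $e_i,f_i$ is the technical heart of your argument and you derive it from this misstated correspondence, the proof as written has a genuine gap; it is fixable by redoing the case analysis with the correct dichotomy (tracking minima versus extras rather than bumping versus non-bumping) and checking in each case that the surviving rightmost $\bplus$ (leftmost $\bminus$) in the hook sits over the column containing the entry that actually changes.
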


\begin{proof}
First consider the case $s=1$. In this case, $\mu = \Lambda_k$.
It is straightforward to see that the map $\Phi_k : \svssyt^n_{\Lambda_k}(\Lambda_1) \to B(\Lambda_k)$ given by
\[
\boxed{x_1 < \cdots < x_k} \longmapsto
\begin{array}{|c|} \hline x_1 \\\hline \vdots \\\hline x_k \\\hline \end{array}
\]
is a crystal isomorphism, as desired.

Now suppose $s > 1$. By the semistandardness conditions, it is straightforward to see that $U_{\mu}$ is the unique element of $\svssyt^n(s\Lambda_1)$ with weight $\mu$. Note that $U_\mu$ is highest weight and that there is a unique $B(\mu) \subseteq B\bigl( (s-1)\Lambda_1 \bigr) \otimes B(\Lambda_k)$.


We define a morphism $\Phi_\mu \colon \svssyt^n_{\mu}(s\Lambda_1) \to B(\mu)$ as follows. Let $T \in \svssyt^n_{\mu}(s\Lambda_1)$ be
\begin{equation}
\label{eq:T_defn}
T =
\begin{array}{|c|c|c|c|}
\hline
x_{1,1} < \cdots < x_{1,\ell_1} & x_{2,1} < \cdots < x_{2,\ell_2} & \cdots &  x_{s,1} < \cdots < x_{s,\ell_s} 
\\\hline
\end{array}\ .
\end{equation}
Then we have
\[
\Phi_\mu(T) =
\begin{array}{|c|c|c|c|}
\hline
x_{1,\ell_1} & x_{2,\ell_2} & \cdots & x_{s-1,\ell_{s-1}}
\\\hline
\end{array}
\otimes
\begin{array}{|c|} \hline x_{1,1} \\\hline \vdots \\\hline x_{1,\ell_1-1} \\\hline x_{2,1} \\\hline \vdots \\\hline x_{s-1,\ell_{s-1}-1} \\\hline x_{s,1} \\\hline \vdots \\\hline x_{s,\ell_s} \\\hline \end{array}\ .
\]
We note that $\Phi^{-1}(r \otimes c)$ can be considered as the unique insertion of $c$ into $r$ to obtain a semistandard set-valued tableau. More explicitly, for
\[
S = \begin{array}{|c|c|c|c|}
\hline
r_1 & r_2 & \cdots & r_{s-1}
\\\hline
\end{array}
\otimes
\begin{array}{|c|} \hline c_1 \\\hline c_2 \\\hline \vdots  \\\hline c_k \\\hline \end{array}\ ,
\]
we define $\Phi_{\mu}^{-1}$ by
\[
\Phi_{\mu}^{-1}(S) =
\begin{array}{|c|c|c|c|c|}
\hline
c_1 \cdots c_{i_1} r_1 & c_{i_1+1} \cdots c_{i_2} r_2 & \cdots & c_{i_{s-2} + 1} \cdots c_{i_{s-1}} r_{s-1} & c_{i_{s-1}+1} \cdots c_k
\\\hline
\end{array}\ ,
\]
where $c_{i_j} < r_j \leq c_{i_j+1}$ for all $1 \leq j < s$.
Note that any pairing of $\bminus \bplus$ in $T$ must come from two entries in the $j$-th cell. Hence, either this pair is $c_k c_{k+1}$, for some $c_{i_{j-1}} \leq k < c_{i_j}$ or $c_{i_j} r_j$. However, this clearly corresponds to a canceling $\bminus \bplus$ under $\Phi_{\mu}$, and hence $\Phi_{\mu}$ is a crystal isomorphism.
\end{proof}

\begin{ex}
\label{ex:single_row_bijection}
We consider the crystal isomorphism $\Phi_{311}$ for $\fsl_3$ given by Proposition~\ref{prop:single_row}, where we have marked the entries from the column in bold:
\[
\ytableausetup{boxsize=2.7em}
\scalebox{0.67}{
\begin{tikzpicture}[baseline=4,>=latex]
\node (A) {\ytableaushort{11{\cbone,\!\cbtwo,\!\cbthr}}};
\node[below = 1.9 of A] (B) {\ytableaushort{1{\cbone,\!2}{\cbtwo,\!\cbthr}}};
\node[below left = 1.5 and -0.3 of B] (C) {\ytableaushort{{\cbone,\!2}{2}{\cbtwo,\!\cbthr}}};
\node[below right = 1.5 and -0.3 of B] (D) {\ytableaushort{1{\cbone,\!\cbtwo,\!3}\cbthr}};
\node[below right = 1.5 and -0.3 of C] (E) {\ytableaushort{{\cbone,\!2}{\cbtwo,\!3}\cbthr}};
\node[below = 1.9 of E] (F) {\ytableaushort{{\cbone,\!\cbtwo,\!3}3\cbthr}};
\path (A) edge[pil,color=blue] node[right,black]{$1$} (B);
\path (B) edge[pil,color=red] node[above right,black]{$2$} (D);
\path (C) edge[pil,color=red] node[below left,black]{$2$} (E);
\path (B) edge[pil,color=blue] node[above left,black]{$1$} (C);
\path (D) edge[pil,color=blue] node[below right,black]{$1$} (E);
\path (E) edge[pil,color=red] node[right,black]{$2$} (F);
\end{tikzpicture}
\hspace{4mm}
\ytableausetup{boxsize=1.6em}
\begin{tikzpicture}[baseline=0,>=latex]
\node (A) {$\raisebox{-5pt}{\ytableaushort{11}} \otimes \raisebox{11pt}{\ytableaushort{\cbone,\cbtwo,\cbthr}}$};
\node[below = 1 of A] (B) {$\raisebox{-5pt}{\ytableaushort{12}} \otimes \raisebox{11pt}{\ytableaushort{\cbone,\cbtwo,\cbthr}}$};
\node[below left = 1 and 0.5 of B] (C) {$\raisebox{-5pt}{\ytableaushort{22}} \otimes \raisebox{11pt}{\ytableaushort{\cbone,\cbtwo,\cbthr}}$};
\node[below right = 1 and 0.5 of B] (D) {$\raisebox{-5pt}{\ytableaushort{13}} \otimes \raisebox{11pt}{\ytableaushort{\cbone,\cbtwo,\cbthr}}$};
\node[below right = 1 and 0.5 of C] (E) {$\raisebox{-5pt}{\ytableaushort{23}} \otimes \raisebox{11pt}{\ytableaushort{\cbone,\cbtwo,\cbthr}}$};
\node[below = 1 of E] (F) {$\raisebox{-5pt}{\ytableaushort{33}} \otimes \raisebox{11pt}{\ytableaushort{\cbone,\cbtwo,\cbthr}}$};
\path (A) edge[pil,color=blue] node[right,black]{$1$} (B);
\path (B) edge[pil,color=red] node[above right,black]{$2$} (D);
\path (C) edge[pil,color=red] node[below left,black]{$2$} (E);
\path (B) edge[pil,color=blue] node[above left,black]{$1$} (C);
\path (D) edge[pil,color=blue] node[below right,black]{$1$} (E);
\path (E) edge[pil,color=red] node[right,black]{$2$} (F);
\end{tikzpicture}
}
\]
\end{ex}

\begin{remark}
\label{remark:hook_shape}
We can also construct a bijection $\Psi_{\mu}$ to the hook shape $B(\mu)$ following~\cite[Sec.~6]{Buch02}. Specifically, given a semistandard set-valued tableau $T$, we take the minimal element of each entry of $T$ and place the remaining entries down the column. Specifically, keeping the same notation as in Equation~\eqref{eq:T_defn}, we have
\[
T \longmapsto
\begin{array}{|c|c|c|c|}
\hline
x_{1,1} & x_{2,1} & \cdots & x_{s,1}
\\\hline
x_{1,2}
\\\cline{1-1}
\vdots
\\\cline{1-1}
x_{1,\ell_1}
\\\cline{1-1}
\vdots
\\\cline{1-1}
x_{s,2}
\\\cline{1-1}
\vdots
\\\cline{1-1}
x_{s,\ell_2}
\\\cline{1-1}
\end{array}
\]
and that $\Psi_{\mu}$ is an isomorphism is similar to showing $\Phi_{\mu}$, the isomorphism given by Proposition~\ref{prop:single_row}, is an isomorphism.
We note that the Robinson--Schensted--Knuth (RSK) insertion $r \leftarrow c$ of the element $r \otimes c = \Phi_{\mu}(T)$ gives an explicit crystal isomorphism with the elements of $B(\mu)$ (see, \textit{e.g.},~\cite{BS17,LLT02}); in particular, the result is precisely $\Psi_{\mu}(T)$. For more on RSK insertion, we refer the reader to, \textit{e.g.},~\cite{ECII}.
\end{remark}

We also show that the $U_q(\fsl_n)$-crystal structure can be equally given by both the Far-Eastern and Middle-Eastern reading words as for $\ssyt^n(\lambda)$. Indeed, let $\svssyt_F^n(\lambda)$ (resp.~$\svssyt_M^n(\lambda)$) denote the semistandard set-valued tableaux, where the box we act on is given by the signature rule using the Far-Eastern (resp.\ Middle-Eastern) reading word of the entries.

\begin{prop}
\label{prop:reading_word_equality}
We have
\[
\svssyt^n(\lambda) = \svssyt_F^n(\lambda) = \svssyt_M^n(\lambda)
\]
as $U_q(\fsl_n)$-crystals.
\end{prop}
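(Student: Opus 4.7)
The plan is to show that for every $T \in \svssyt^n(\lambda)$ and every $i \in I$, the three signature rules---the column rule defining $\svssyt^n(\lambda)$, and the reading-word rules defining $\svssyt_F^n(\lambda)$ and $\svssyt_M^n(\lambda)$---identify the same box on which $f_i$ (and $e_i$) will act. Once that is established, the crystal operators, whose actions are then determined by this box together with the explicit rules of the definition, agree on the three sets, so the crystals coincide.

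First I would handle the Far-Eastern case. The crucial structural observation is that the entries in a single column of $T$, listed top-to-bottom across all boxes, form a strictly increasing sequence: entries are strictly increasing within each box by assumption, and the semistandardness condition $\max A < \min B$ enforces strict increase across the boundary between a box $A$ and its southern neighbor $B$. Consequently each column contains at most one occurrence of $i$ and at most one occurrence of $i+1$, and if both occur, $i$ appears strictly above $i+1$. In the Far-Eastern reading (bottom-to-top within a column, then left-to-right), these two entries are therefore adjacent and in the order $i+1, i$, contributing a canceling $\bminus\bplus$ pair. After these within-column cancellations, each column contributes at most a single $\bplus$ or $\bminus$, exactly matching its contribution under the column signature. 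Since columns are processed left-to-right in both rules, the two reduced sign sequences coincide and pick out the same box. This gives $\svssyt^n(\lambda) = \svssyt_F^n(\lambda)$.

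The Middle-Eastern case is more delicate. I would first use row semistandardness to show that within each row the $+$-flavored boxes (containing $i$ but not $i+1$) form a consecutive left block, the $-$-flavored boxes (containing $i+1$ but not $i$) form a consecutive right block, and the two blocks are either disjoint or overlap at a single $\pm$-flavored box containing both. Reading the row left-to-right and resolving any $\pm$-box via an internal $\bminus\bplus$ cancellation, row $r$ contributes $\bplus^{a_r} \bminus^{b_r}$ to the signature, where $a_r$ and $b_r$ count the $+$- and $-$-flavored boxes in that row. Concatenating bottom-to-top gives the Middle-Eastern signature
\[
\bplus^{a_N} \bminus^{b_N}\, \bplus^{a_{N-1}} \bminus^{b_{N-1}} \cdots \bplus^{a_1} \bminus^{b_1}.
\]
The main obstacle is to show that after the remaining $\bminus\bplus$ cancellations this sequence reduces to the column signature and selects the same box. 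The heart of the argument is a pairing along columns: for any column containing $i$ in an upper box $A$ and $i+1$ in a distinct lower box $B$, the $\bminus$ from $B$ is read strictly before the $\bplus$ from $A$ in the bottom-to-top row order, and one verifies that these two signs necessarily pair and cancel in the global reduction. The cleanest way to carry out the verification is to observe that the sequence $(a_r, b_r)$ depends only on the flavor pattern of $T$ for the index $i$, and that this pattern is combinatorially realized by an auxiliary ordinary semistandard tableau; the desired matching then reduces to the classical equivalence of Middle-Eastern and column signatures for ordinary semistandard tableaux. This yields $\svssyt^n(\lambda) = \svssyt_M^n(\lambda)$ and completes the proof.
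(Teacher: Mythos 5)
Your Far-Eastern half is correct and is essentially the paper's own (very terse) argument: column strictness forces each column of $T$ to contain at most one $i$ and at most one $i+1$, with the $i$ strictly above the $i+1$ when both occur, so after the internal $\bminus\bplus$ cancellation each column contributes at most one sign, in the same box that the column rule selects. No issue there.

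The Middle-Eastern half, however, has a genuine gap. All of the content sits in your assertion that for a column with $i$ in a box $A$ and $i+1$ in the box below it, the corresponding $\bminus$ and $\bplus$ are both cancelled in the global reduction of the Middle-Eastern word, and that the surviving signs (hence the selected box) agree with the column rule; you explicitly defer the verification to an ``auxiliary ordinary semistandard tableau'' realizing the flavor pattern of $T$ for the letters $i,i+1$, so as to invoke the classical equivalence for $\ssyt^n(\lambda)$. Such a tableau need not exist. Take the one-row tableau with boxes $\{i\}$, $\{i,i+1\}$, $\{i+1\}$: an ordinary filling of shape $(3)$ whose first box contains $i$ and whose third box contains $i+1$ must have its middle entry equal to $i$ or $i+1$, so the pattern ``$+$, flavorless, $-$'' is not realizable; in general a box containing both $i$ and $i+1$ cannot be replaced by a flavorless entry when it is wedged between an $i$ and an $i+1$ in its row. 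So the pivotal cancellation claim is left unproved, and with it the identification of the selected box. The paper closes exactly this gap by a direct observation: semistandardness forces the entry contributing the $\bminus$ of any cancelling pair to lie to the lower-left of the entry contributing the $\bplus$, and both the Far-Eastern and Middle-Eastern orders read such lower-left entries first; hence the cancelling pairs, the reduced signature, and the acted-upon box coincide for the two reading words. Replacing your auxiliary-tableau reduction by this direct argument (or by any direct proof of your column-pairing claim, e.g.\ an induction on the rows between $A$ and the surviving signs) repairs the proof; as written, the reduction you lean on is not available.
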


\begin{proof}
It is clear that $\svssyt^n(\lambda) = \svssyt_F^n(\lambda)$ by the column strictness. That $\svssyt^n_F(\lambda) = \svssyt_M^n(\lambda)$ follows from semistandardness: for any canceling pair, the entry contributing the $\bminus$ must be to the lower-left of the entry giving the $\bplus$.
\end{proof}

If we consider reading each box as a column, then highest weight elements in $\svssyt^n(\lambda)$ are characterized as the tableaux whose reading word is a highest weight element in $B(\Lambda_1)^{\otimes \lvert \mu \rvert}$, or a \defn{Yamanouchi} word. Hence, we call such highest weight elements in $\svssyt^n(\lambda)$ \defn{Yamanouchi set-valued tableaux}.

\begin{thm}
\label{thm:decomp_of_sv_crystal}
Let $\svssyt^n(T)$ denote the crystal closure of a Yamanouchi set-valued tableau $T$ of  weight $\mu$. Then 
\[
\svssyt^n(T) \iso B(\mu).
\]
Moreover, we have
\[
\svssyt^n(\lambda) \iso \bigoplus_{\mu} B(\mu)^{\oplus M_{\lambda}^{\mu}},
\]
where $M_{\lambda}^{\mu}$ denotes the number of Yamanouchi set-valued tableaux of shape $\lambda$ and weight $\mu$.
\end{thm}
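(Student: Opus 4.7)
The plan is to reduce the general statement to the single-row case handled by Proposition~\ref{prop:single_row}, via a row-by-row tensor product decomposition of semistandard set-valued tableaux. The second assertion follows immediately from the first once we observe that every connected component of the finite abstract crystal $\svssyt^n(\lambda)$ contains a unique highest weight element (a Yamanouchi set-valued tableau, by the definition of highest weight via the signature rule), so I focus on proving $\svssyt^n(T) \iso B(\mu)$ for a Yamanouchi set-valued tableau $T$ of weight $\mu$.

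First I would upgrade Proposition~\ref{prop:single_row} to the statement that, for each excess value $e$, the subcrystal of single-row set-valued tableaux of a fixed excess is a direct sum of crystals $B(\nu)$. Inspecting the proof of Proposition~\ref{prop:single_row}, the map $\Phi_\mu$ is in fact defined on all of $\svssyt^n(s\Lambda_1)$ (once the excess is fixed, so that the length of the column factor is determined) and gives an injective strict crystal morphism into $B\bigl((s-1)\Lambda_1\bigr) \otimes B(\Lambda_{e+1})$; the latter decomposes into a direct sum of $B(\nu)$'s by the standard theory of $U_q(\fsl_n)$-crystals, so every connected component of $\svssyt^n(s\Lambda_1)$ is some $B(\nu)$.

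For the induction on the number of rows, I would decompose any $T \in \svssyt^n(\lambda)$ as
\[
\Psi(T) = r_\ell \otimes r_{\ell-1} \otimes \cdots \otimes r_1,
\]
where $r_i$ is the $i$-th row of $T$ viewed as an element of $\svssyt^n(\lambda_i \Lambda_1)$, and show that $\Psi$ is an injective strict $U_q(\fsl_n)$-crystal morphism into the tensor product $\svssyt^n(\lambda_\ell \Lambda_1) \otimes \cdots \otimes \svssyt^n(\lambda_1 \Lambda_1)$. Injectivity is immediate, and the crystal morphism property follows from Proposition~\ref{prop:reading_word_equality} applied to the reverse Middle-Eastern reading word (which reads rows bottom-to-top), combined with the critical observation that the modified action of $f_i$ (and dually $e_i$) when $i \in \bbb^\rightarrow$ only ever shuffles elements between the cells $\bbb$ and $\bbb^\rightarrow$ within a single row. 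Hence the modification is consumed entirely inside one tensor factor, making $\Psi$ compatible with the tensor product signature rule between factors. Since each factor is a direct sum of $B(\mu)$'s by the base case, and tensor products of such crystals decompose as direct sums of $B(\nu)$'s, every connected component of $\svssyt^n(\lambda)$ is isomorphic to some $B(\nu)$; the weight $\nu$ must coincide with that of the component's (unique) highest weight element, giving $\svssyt^n(T) \iso B(\mu)$.

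The main obstacle is the verification that $\Psi$ is a strict crystal morphism. Concretely, one must check that locating the rightmost uncanceled $\bplus$ of $T$ by the global signature rule agrees with applying the tensor product rule to $\Psi(T)$, and moreover that the two possible cases in the definition of $f_i$ (standard, and the modified $i \in \bbb^\rightarrow$ case) descend correctly to the action inside a single row factor. The first point reduces to the content of Proposition~\ref{prop:reading_word_equality}, since the concatenation of per-row reading words is the full reading word; the second point is where the care is required, but is ultimately a local verification since the modification never crosses between rows.
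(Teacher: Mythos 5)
Your proposal is correct and follows essentially the same route as the paper: read the tableau row-by-row to get a strict embedding into a tensor product of single-row crystals (justified via Proposition~\ref{prop:reading_word_equality}, noting the modified operator action never leaves a row), apply Proposition~\ref{prop:single_row} to identify each single-row factor with a direct sum of highest weight crystals, and conclude by regularity that each component is some $B(\mu)$ with the multiplicity counted by Yamanouchi set-valued tableaux. The extra details you supply (upgrading Proposition~\ref{prop:single_row} to all of $\svssyt^n(s\Lambda_1)$ and the local check that $\Psi$ is strict) are exactly the points the paper leaves implicit.
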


\begin{proof}
Let $\lambda = (\lambda_1, \dotsc, \lambda_{\ell})$ as a partition.
We construct a map
\[
\Psi \colon \svssyt^n(\lambda) \to \svssyt^n(\lambda_1 \Lambda_1) \otimes \svssyt^n(\lambda_2 \Lambda_1) \otimes \dotsm \otimes \svssyt^n(\lambda_{\ell} \Lambda_1)
\]
by reading a tableau row-by-row, and $\Psi$ is a strict crystal embedding by Proposition~\ref{prop:reading_word_equality}, since $\Psi$ is compatible with the Middle-Eastern reading word. By Proposition~\ref{prop:single_row}, there exists an isomorphism
\[
\Phi_s \colon \svssyt^n(s \Lambda_1) \to \ \bigoplus_{k=1}^n B\bigl( (s-1) \Lambda_1 + \Lambda_k \bigr).
\]
Therefore, we construct a strict crystal embedding
\[
(\Phi_{\lambda_1} \oplus \dotsm \oplus \Phi_{\lambda_{\ell}}) \circ \Psi \colon \svssyt^n(\lambda) \to \bigotimes_{j=1}^{\ell} \left( \bigoplus_{k=1}^n B\bigl( (\lambda_j - 1) \Lambda_1 + \Lambda_k \bigr) \right),
\]
and since $\svssyt^n(\lambda)$ is a regular crystal, we have $\svssyt^n(T) \iso B(\mu)$ for $T \in \svssyt^n(\lambda)$ of weight $\mu$.

The second claim follows from the first, since each component of the crystal $\svssyt^n(\lambda)$ contains a unique Yamanouchi tableau as its highest weight element.
\end{proof}

\begin{ex}
The reader may check that the components illustrated in Example~\ref{ex:set-valued_crystal} are respectively crystal isomorphic to the crystal structures on $\ssyt^3(2,2)$, $\ssyt^3(2,1,1)$, $\ssyt^3(2,2,1)$ and $\ssyt^3(2,2,2)$.
\end{ex}

As a corollary, we obtain the following combinatorial formula for writing a symmetric Grothendieck polynomial in the basis of Schur polynomials.

\begin{cor}\label{cor:schur_expansion_of_Groth}
We have
\[
\G_{\lambda} = \sum_{\mu} \beta^{|\mu| - |\lambda|} M_\lambda^\mu s_{\mu},
\]
where $M^\lambda_\mu$ denotes the number of Yamanouchi set-valued tableaux of shape $\lambda$ and weight $\mu$.
\end{cor}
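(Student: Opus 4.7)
The plan is to derive the corollary directly by taking characters on both sides of the crystal isomorphism established in Theorem~\ref{thm:decomp_of_sv_crystal}, with the single substantive observation being that the excess statistic is constant on each connected crystal component, so that the factor $\beta^{\excess(T)}$ can be pulled out of each component sum.

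First I would expand the definition
\[
\G_\lambda(x_1,\dotsc,x_n;\beta) = \sum_{T \in \svssyt^n(\lambda)} \beta^{\excess(T)}\, \xx^{\wt(T)}
\]
and then partition this sum according to the connected components of the crystal $\svssyt^n(\lambda)$, using Theorem~\ref{thm:decomp_of_sv_crystal} to recognize that there are exactly $M_\lambda^\mu$ components isomorphic to $B(\mu)$.

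The key observation to make next is that $\alpha_i = \epsilon_i - \epsilon_{i+1}$ has coordinate sum zero, so every crystal operator $f_i$ (and $e_i$) preserves $|\wt(T)|$; hence $|\wt(T)|$ is constant on each connected component. For a component isomorphic to $B(\mu)$, its highest weight element is by definition a Yamanouchi set-valued tableau of weight $\mu$, so every element $T$ of that component satisfies $|\wt(T)| = |\mu|$, and therefore
\[
\excess(T) = |\wt(T)| - |\lambda| = |\mu| - |\lambda|
\]
is a constant depending only on $\mu$. This lets $\beta^{\excess(T)}$ be pulled out of the sum over that component, leaving $\sum_{b \in B(\mu)} \xx^{\wt(b)} = s_\mu(x_1,\dotsc,x_n)$ by the standard fact that the character of $B(\mu)$ is the Schur polynomial $s_\mu$. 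Combining these gives the finite-variable identity
\[
\G_\lambda(x_1,\dotsc,x_n;\beta) = \sum_{\mu} M_\lambda^\mu\, \beta^{|\mu|-|\lambda|} s_\mu(x_1,\dotsc,x_n),
\]
and passing to the stable limit $n \to \infty$ recovers the claimed equation, noting that $M_\lambda^\mu$ stabilizes since any Yamanouchi set-valued tableau of shape $\lambda$ and weight $\mu$ uses only the entries $1,\dotsc,\ell(\mu)$. There is no genuine obstacle here: all the work has been absorbed into Theorem~\ref{thm:decomp_of_sv_crystal}, and the only thing one needs to verify on the spot is the constancy of $\excess$ on each component, which is immediate from the weight-preserving/shifting behavior of the Kashiwara operators.
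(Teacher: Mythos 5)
Your proposal is correct and follows essentially the same route as the paper: decompose $\svssyt^n(\lambda)$ via Theorem~\ref{thm:decomp_of_sv_crystal}, note that the excess on a component isomorphic to $B(\mu)$ is $|\mu|-|\lambda|$, and identify the component characters with Schur polynomials. Your explicit verification that excess is constant on components (since $f_i$ shifts the weight by $\alpha_i$, which has coordinate sum zero) and the remark on stabilization as $n\to\infty$ merely spell out details the paper leaves implicit.
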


\begin{proof}
This follows from Theorem~\ref{thm:decomp_of_sv_crystal}, noting that the character for the whole crystal $\svssyt^n(\lambda)$ is the Grothendieck polynomial $\G_{\lambda}$, while the character for a component that is isomorphic to $B(\mu)$ is the Schur polynomial $s_\mu$. Note also that the excess is precisely $|\mu| - |\lambda|$.
\end{proof}

C.~Lenart~\cite[Theorem~2.2]{Lenart00} gives a different combinatorial formula for the coefficients $M_\lambda^\mu$ of Corollary~\ref{cor:schur_expansion_of_Groth}. An \defn{increasing tableau}  of shape $\mu / \lambda$ is a filling of the skew Young diagram $\mu / \lambda$ by positive integers that is strictly increasing from left to right across rows and strictly increasing from top to bottom down columns. We say a tableau is \defn{flagged} if for each $i$, every label in row $i$ is at most $i-1$ (counting the top row as row $1$). The formula of~\cite[Theorem~2.2]{Lenart00} is that $M_\lambda^\mu$ counts the number of flagged increasing tableaux of shape $\mu / \lambda$.

It follows then from Corollary~\ref{cor:schur_expansion_of_Groth} that flagged increasing tableaux of shape $\mu / \lambda$ are equinumerous with Yamanouchi set-valued tableaux of shape $\lambda$ and weight $\mu$. Although these objects are superficially very different, we now show that the \defn{uncrowding} bijection given in~\cite[Sec.~6]{Buch02} (see also \cite{RTY18}), recalled below, yields the desired crystal isomorphism. 

Let  $\mathcal{F}_{\mu / \lambda}$ denote the set of flagged increasing tableaux of shape $\mu / \lambda$. Following~\cite{Buch02}, define a bijection $\psi \colon \svssyt^n(\lambda) \to B(\mu) \times \mathcal{F}_{\mu / \lambda}$ recursively as follows. Fix some $T \in \svssyt^n(\lambda)$. Let $R$ be the top row of $T$ and $T'$ be the remaining rows. Suppose $\psi(T') = (S', F')$. Consider $R \mapsto r \otimes c$ given by Proposition~\ref{prop:single_row} (or a single hook shape, which equals the RSK insertion $r \leftarrow c$, by Remark~\ref{remark:hook_shape}), we perform RSK insertion $S = S' \leftarrow r \leftarrow c$ (we disregard the recording tableau). Let $F$ be the flagged increasing tableau of shape $\mu / \lambda$ obtained by copying the $i$-th row of $F'$ to the $(i+1)$-th row of $F$; if $F$ needs an additional box in this row, we put an $i$ in this box. The result is $\psi(T) = (S, F)$.

\begin{thm}
\label{thm:flagged_tableaux_isomorphism}
We have
\[
\svssyt^n(\lambda) \iso \bigoplus_{\mu} B(\mu)^{\oplus F^{\lambda}_{\mu}},
\]
where $F^{\lambda}_{\mu}$ denotes the number of flagged increasing tableaux of shape $\mu / \lambda$.
\end{thm}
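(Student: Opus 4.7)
The strategy is to show that the uncrowding bijection $\psi$ described just before the theorem statement is a crystal isomorphism
\[
\svssyt^n(\lambda) \iso \bigsqcup_{\mu} \bigl( B(\mu) \times \mathcal{F}_{\mu/\lambda}\bigr),
\]
where the crystal structure on the target acts trivially on the $\mathcal{F}_{\mu/\lambda}$ factor. Once this is established, the theorem is immediate, because each of the $F_\mu^\lambda = |\mathcal{F}_{\mu/\lambda}|$ flagged increasing tableaux of shape $\mu/\lambda$ indexes a distinct summand isomorphic to $B(\mu)$.

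I would proceed by induction on the number of rows $\ell(\lambda)$. For the base case $\ell(\lambda)=1$, note that $\mathcal{F}_{\mu/\lambda}$ is empty unless $\mu$ is the hook $(s-1)\Lambda_1 + \Lambda_k$ for some $k$ (with $s = |\lambda|$), and in that case $|\mathcal{F}_{\mu/\lambda}|=1$. Then $\psi$ coincides with the map $\Psi_\mu$ of Remark~\ref{remark:hook_shape}, which is the RSK-insertion of $r \otimes c$ given by Proposition~\ref{prop:single_row}; since RSK is a crystal isomorphism (see, \textit{e.g.}, \cite{BS17,LLT02}) and $\Phi_\mu$ is a crystal isomorphism by Proposition~\ref{prop:single_row}, the base case follows.

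For the inductive step, let $T \in \svssyt^n(\lambda)$ have top row $R$ and the remaining rows $T'$ of shape $\lambda'$. By Proposition~\ref{prop:reading_word_equality}, the crystal structure on $\svssyt^n(\lambda)$ is determined by the Middle-Eastern reading word, which reads $T'$ before $R$; hence $T \mapsto T' \otimes R$ is a strict crystal embedding $\svssyt^n(\lambda) \hookrightarrow \svssyt^n(\lambda') \otimes \svssyt^n(|R|\Lambda_1)$. The inductive hypothesis gives a crystal isomorphism $T' \mapsto (S', F')$, and Proposition~\ref{prop:single_row} gives a crystal isomorphism $R \mapsto r \otimes c$. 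The remaining step is RSK insertion $S = S' \leftarrow r \leftarrow c$, which is a crystal isomorphism from $B(\mathrm{sh}(S')) \otimes B(\Lambda_1)^{\otimes |r|+|c|}$ onto a direct sum of $B(\mu)$'s; in particular, the sequence of intermediate insertion shapes is constant on each crystal component. Since the construction of $F$ (copying rows of $F'$ down and filling each new box with its previous row index) is determined purely by these shapes, $F$ is a crystal invariant. It follows that $\psi(f_i T) = (f_i S, F)$ and $\psi(e_i T) = (e_i S, F)$, completing the induction and the proof.

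The main technical obstacle is justifying rigorously that the specific recipe for constructing $F$ under uncrowding genuinely depends only on the sequence of RSK insertion shapes (which are crystal-invariants), rather than on the particular values inserted. While this is intuitively clear from the crystal theory of RSK, making the match fully explicit requires carefully tracking how the added boxes at each insertion step are recorded. As a sanity check (not replacing the uncrowding argument but confirming the count), one can combine Corollary~\ref{cor:schur_expansion_of_Groth} with Lenart's Schur expansion to get $M_\lambda^\mu = F_\mu^\lambda$ by linear independence of Schur polynomials, which together with Theorem~\ref{thm:decomp_of_sv_crystal} already forces the numerical content of Theorem~\ref{thm:flagged_tableaux_isomorphism}; the uncrowding argument upgrades this equality to an explicit crystal isomorphism.
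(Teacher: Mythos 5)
Your proposal is correct and follows essentially the same route as the paper, whose proof is just the terse assertion that $\psi$ is a crystal isomorphism because RSK insertion and Proposition~\ref{prop:single_row} are crystal isomorphisms (with the crystal structure on $B(\mu)\times\mathcal{F}_{\mu/\lambda}$ acting only on the $B(\mu)$ factor); your row-by-row induction and the observation that $F$ is determined by the crystal-invariant insertion shapes simply make explicit the details the paper leaves implicit.
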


\begin{proof}
We give $B(\mu) \times \mathcal{F}_{\mu / \lambda}$ a crystal structure by the usual crystal operators acting on $B(\mu)$.
We have that $\psi$ is a crystal isomorphism since RSK insertion and Proposition~\ref{prop:single_row} are crystal isomorphisms.
\end{proof}

\begin{ex}
We have
\begin{align*}
\ytableausetup{boxsize=1.5em}
T & = \ytableaushort{{1,\!2}3{3,\!4},{3,\!5}{5,\!6}}\,,
&
\psi(T) & = \left(
  \raisebox{30pt}{$\ytableaushort{133,24,35,5,6}$}\,, \,
  \raisebox{30pt}{$\ytableaushort{\cdot\cdot\cdot,\cdot\cdot,12,2,4}$}
\right),
\allowdisplaybreaks \\
f_3 T & = \ytableaushort{{1,\!2}{3,\!4}4,{3,\!5}{5,\!6}}\,,
&
\psi(f_3 T) = f_3 \psi(T) & = \left(
  \raisebox{30pt}{$\ytableaushort{134,24,35,5,6}$}\,, \,
  \raisebox{30pt}{$\ytableaushort{\cdot\cdot\cdot,\cdot\cdot,12,2,4}$}
\right).
\end{align*}
\end{ex}

We show in the following example that the K-Bender--Knuth moves of~\cite{IS14} do not commute with the uncrowding bijection $\psi$. First, let us recall the \defn{K-Bender--Knuth} moves $K_i$ of~\cite{IS14}. A box $B$ is \defn{free} if it either $i \in B$ such that there is not an $i+1$ in the box below it or $i+1 \in B$ such that there is not an $i+1$ in the box above it. Note that if $i,i+1 \in B$, then $B$ is free. We define $K_i T$ row-by-row, reversing the free boxes in a row and interchanging $i$ and $i+1$ in each free box.

\begin{ex}
We have
\[
\ytableausetup{boxsize=1.5em}
K_2\!\left( \ytableaushort{1{1,\!2},3} \right) = \ytableaushort{1{1,\!3},2}\,,
\qquad
\psi\!\left( \ytableaushort{1{1,\!2},3} \right) = \left(\raisebox{12pt}{$\ytableaushort{11,2,3}$}\,, \; \raisebox{12pt}{$\ytableaushort{\cdot\cdot,\cdot,2}$} \right) = (S, F),
\]
but note that $t_2 S = S$. Indeed, we have
\pushQED{\qed}
\[
\psi\!\left( K_2\!\left( \ytableaushort{1{1,\!2},3} \right) \right) = \left( \ytableaushort{11,23}\,, \; \ytableaushort{\cdot\cdot,\cdot1} \right). \qedhere \popQED
\] \let\qed\relax
\end{ex}

There is a natural definition of \defn{K-evacuation} on $\svssyt^n(\lambda)$, given by applying K-Bender--Knuth moves in the analog of Equation~\eqref{eq:evac_BK} 
\[
K_1 (K_2 K_1) \cdots (K_{n-1} K_{n-2} \cdots K_2 K_1) T.
\]
For formal reasons (as in \cite{Stanley09}), K-evacuation is necessarily an involution.
As the next example shows, K-evacuation does not generally coincide with the Lusztig involution on $\svssyt^n(\lambda)$.

\begin{ex}
Consider $\svssyt^3(\lambda)$ for $\lambda = (2,1)$. Then we have
\begin{gather*}
K_1 K_2 K_1 \! \left( \ytableaushort{1{2,\!3},2} \right) = K_1 K_2 \! \left( \ytableaushort{1{1,\!3},2} \right)
= K_1 \! \left( \ytableaushort{1{1,\!2},3} \right)
= \ytableaushort{{1,\!2}2,3}\,,
\\
\left( \ytableaushort{1{2,\!3},2} \right)^* \! = \left( f_1 f_2 \, \ytableaushort{1{1,\!2},2} \right)^*
\! = e_2 e_1 \! \left( \ytableaushort{1{1,\!2},2} \right)^*
= e_2 e_1 \, \ytableaushort{2{2,\!3},3}
= \ytableaushort{1{2,\!3},2}\,.
\end{gather*}
\end{ex}

\section{Other combinatorial models}
\label{sec:diagrams}

In this section, we describe some other combinatorial models that can be used to construct Grothendieck polynomials. We also discuss crystal structures on these models and how they relate to the crystal structure on semistandard set-valued tableaux.

\subsection{Excited Young diagrams}

Let $D = \{1, \dotsc, n\} \times \ZZ_{>0}$, which we think of as a Young diagram of length $n$ with each row having an infinite number of boxes.\footnote{It is sufficient to consider $D$ as being an $n \times (\lambda_1 + n)$ rectangle. This is also effectively removing the flag condition on set-valued tableaux (other than a max-entry of $n$).} Note that we can identity partitions of length at most $n$ with subsets of boxes in $D$ by their Young diagrams. Next, we define \defn{elementary excitations} following~\cite{GK15}:
\[
\text{Type $1$:} \quad
\begin{tikzpicture}[xscale=0.5,yscale=-0.5,baseline=-17]
\fill[blue!30] (0,0) rectangle (1,1);
\draw[step=1] (0,0) grid (2,2);
\end{tikzpicture}
\longmapsto
\begin{tikzpicture}[xscale=0.5,yscale=-0.5,baseline=-17]
\fill[blue!30] (1,1) rectangle (2,2);
\draw[step=1] (0,0) grid (2,2);
\end{tikzpicture}\,,
\hspace{30pt}
\text{Type $2$:} \quad
\begin{tikzpicture}[xscale=0.5,yscale=-0.5,baseline=-17]
\fill[blue!30] (0,0) rectangle (1,1);
\draw[step=1] (0,0) grid (2,2);
\end{tikzpicture}
\longmapsto
\begin{tikzpicture}[xscale=0.5,yscale=-0.5,baseline=-17]
\fill[blue!30] (0,0) rectangle (1,1);
\fill[blue!30] (1,1) rectangle (2,2);
\draw[step=1] (0,0) grid (2,2);
\end{tikzpicture}\,.
\]
(Elementary excitations are closely related to the ``diagram marching moves'' of \cite{KY04}.)
We say the inverse of an elementary excitation is an \defn{elementary emission}.
An \defn{excited Young diagram} of shape $\lambda$ is a set of boxes in $D$ that can be obtained from $\lambda$ by a sequence of elementary excitations.
Let $\EYD^n(\lambda)$ denote the set of excited Young diagrams of shape $\lambda$.

Next, we recall the bijection $\Theta \colon \svssyt^n(\lambda) \to \EYD^n(\lambda)$ from~\cite{GK15}. Let $T(i,j)$ denote the box of $T \in \svssyt^n(\lambda)$ in the box in row $i$ and column $j$. Define
\[
\Theta(T) = \{ (i, i + c - r) \mid i \in T(r,c) \text{ for all } r \leq \ell(\lambda), c \leq \lambda_r \}.
\]
An immediate corollary is that we can compute the symmetric Grothendieck polynomial $\G_\lambda$ as a generating function for excited Young diagrams of shape $\lambda$.
Furthermore, note that the usual crystal operators on semistandard tableaux correspond to Type~$1$ excitations under $\Theta$.
Hence, the notion of ``reduced'' in~\cite{GK15} precisely identifies the excited Young diagrams that correspond to elements of the unique component $B(\lambda) \subseteq \svssyt^n(\lambda)$, which are also the set-valued tableaux of excess $0$.

To obtain the action of the crystal operator on semistandard set-valued tableaux under $\Theta$, we require the an additional type of elementary excitation:
\[
\text{Type $1'$:} \quad
\begin{tikzpicture}[xscale=0.5,yscale=-0.5,baseline=-17]
\fill[blue!30] (0,0) rectangle (2,1);
\fill[blue!30] (2,1) rectangle (3,2);
\draw[step=1] (0,0) grid (3,2);
\end{tikzpicture}
\longmapsto
\begin{tikzpicture}[xscale=0.5,yscale=-0.5,baseline=-17]
\fill[blue!30] (0,0) rectangle (1,1);
\fill[blue!30] (1,1) rectangle (3,2);
\draw[step=1] (0,0) grid (3,2);
\end{tikzpicture}\,.
\]
Thus we can make $\Theta$ into a crystal isomorphism by constructing a signature rule for $e_i$/$f_i$ by reading the $i$-th row of the excited Young diagram from left-to-right and adding a $\bplus$ (resp.~$\bminus$) if we can perform a Type~$1$ or Type~$1'$ elementary excitation (resp.\ emission). The proof is straightforward and left for the interested reader.

\begin{prop}
\label{prop:EYD_crystal}
With the crystal structure on $\EYD^n(\lambda)$ given above, the map $\Theta \colon \svssyt^n(\lambda) \to \EYD^n(\lambda)$ is a crystal isomorphism.
\end{prop}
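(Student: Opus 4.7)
The plan is to verify that the map $\Theta$ intertwines the crystal data, given that its bijectivity is already established in \cite{GK15}. Weight preservation is immediate: each $i$ appearing in $T$ corresponds to a unique box in row $i$ of $\Theta(T)$, so $\wt\bigl(\Theta(T)\bigr) = \wt(T)$ by the natural definition of the weight on $\EYD^n(\lambda)$.

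For the operator correspondence, let $\bbb = T(r,c)$ be the acting box for $f_i$ (so $i \in \bbb$, $i+1 \notin \bbb$), and write $j := i+c-r$ so that $(i,j) \in \Theta(T)$. In the case $i \notin \bbb^{\rightarrow}$, the operator $f_i$ replaces $i$ by $i+1$ in $\bbb$; under $\Theta$, the cell $(i,j)$ is deleted and $(i+1,j+1)$ is inserted, which is a Type~$1$ elementary excitation. A check using semistandardness (in the style of the proof of Lemma~\ref{lem:crystal_well_defined}) confirms the required empty neighbors $(i,j+1), (i+1,j), (i+1,j+1) \notin \Theta(T)$. In the case $i \in \bbb^{\rightarrow}$, the proof of Lemma~\ref{lem:crystal_well_defined} shows $i+1 \in \bbb^{\rightarrow}$ as well, so $f_i$ deletes $(i, j+1)$ and inserts $(i+1, j+1)$ while retaining $(i,j)$ and $(i+1, j+2)$, exactly realizing a Type~$1'$ elementary excitation; the remaining empty-neighbor conditions likewise follow from semistandardness. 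The analysis for $e_i$ is completely parallel via the corresponding emission moves.

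To show the signatures agree, the key observation is that for each $(i,j) \in \Theta(T)$ arising from $i \in T(r,c)$, semistandardness forces the tableau column $c$ to contain $i+1$ if and only if $(i+1,j) \in \Theta(T)$ or $(i+1,j+1) \in \Theta(T)$: any other would-be diagonal contributor to these EYD cells is blocked by row/column strictness in the preimage (with the subtle case of $(i+1,j+1)$ arising from $T(r+1,c+1)$ still forcing $T(r+1,c) = \{i+1\}$, hence $(i+1,j) \in \Theta(T)$). Consequently the ``column $c$ gives $\bplus$'' condition on $T$ translates precisely to $(i,j) \in \Theta(T)$ together with $(i+1,j), (i+1,j+1) \notin \Theta(T)$, which is exactly the EYD signature rule allowing a Type~$1$ or Type~$1'$ excitation at $(i,j)$; a dual statement holds for $\bminus$ and emissions. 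Since the left-to-right order on tableau columns containing $i$ (respectively $i+1$) is preserved by $\Theta$ into the left-to-right order in EYD row $i$ (respectively $i+1$), the two signature sequences agree after cancelling $\bminus\bplus$ pairs, so $\varepsilon_i$, $\varphi_i$, and the acting cell match.

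The main obstacle is this last signature-matching step, which hinges on enumerating the possible contributions to each EYD cell along a diagonal and ruling out the ``phantom'' configurations described above through careful row/column strictness arguments applied to the preimage set-valued tableau.
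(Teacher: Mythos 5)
The paper never records a proof of this proposition (it is explicitly ``left for the interested reader''), and your argument follows exactly the intended route: identify the two cases of the set-valued operators with Type~$1$ and Type~$1'$ moves under $\Theta$, then match signatures. Your case analysis of the moves is correct, and your key equivalence --- column $c$ contains $i+1$ if and only if $(i+1,j)\in\Theta(T)$ or $(i+1,j+1)\in\Theta(T)$, including the subtle contributor $T(r+1,c+1)$, which forces $T(r+1,c)=\{i+1\}$ --- is right and is indeed the crux of the verification.

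Two spots need tightening. First, the condition ``$(i,j)\in\Theta(T)$ with $(i+1,j),(i+1,j+1)\notin\Theta(T)$'' is \emph{not} literally ``a Type~$1$ or Type~$1'$ excitation can be performed at $(i,j)$'': for the one-row tableau $T$ with entries $1,1$ we have $\Theta(T)=\{(1,1),(1,2)\}$, and the cell $(1,1)$ satisfies your condition (row $2$ is empty) yet admits neither move, since $(1,2)$ is occupied and $(2,3)$ is not; meanwhile the tableau rule marks both columns $\bplus$ and $\varphi_1(T)=2$. So you should either adopt your row-$(i+1)$ emptiness condition as the precise formulation of the EYD signature rule (this is the reading under which $\varepsilon_i,\varphi_i$ transport correctly), or argue separately that the discrepancy never changes the rightmost uncancelled $\bplus$ or leftmost uncancelled $\bminus$. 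Second, preservation of the left-to-right order \emph{within} row $i$ and within row $i+1$ does not by itself give agreement of the signature sequences: the cancellation of $\bminus\bplus$ pairs depends on the interleaving of a $\bplus$ column $c$ (cell $(i,i+c-r)$) with a $\bminus$ column $c'$ (cell $(i+1,i+1+c'-r')$), which live in different rows, so you must check that $c<c'$ forces $i+c-r<i+1+c'-r'$ and dually. This is true and is proved by the same kind of semistandardness bound you use elsewhere (minima increase by at least one per downward step, so $i\in T(r,c)$ and $i+1\in T(r',c')$ with $c'\geq c$ forces $r'\leq r+1$, and the case $c'<c$ with $i+1+c'-r'\geq i+c-r$ is impossible), but as written this step is asserted rather than justified, and without it the claim that $\varepsilon_i$, $\varphi_i$ and the acting cell match is not yet complete.
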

 
\begin{remark}
\label{rem:crystal_excitations}
Type~$2$ excitations correspond to adding an extra entry to a box to $T$ such that the resulting semistandard set-valued tableaux is in $\svssyt^n(\lambda)$.
In addition, the crystal operators are a (generally proper) subset of all possible Type~$1$ and Type~$1'$ excitations and emissions due to the bracketing rule.
\end{remark}

Note that Type~$1'$ moves are required in order to construct the full crystal structure and correspond to changing the sizes of the sets within boxes. However, by~\cite[Lemma~4.17]{GK15}, we do not require Type~$1'$ moves in order to construct every element in $\svssyt^n(\lambda)$; in fact, the proof provides an algorithm to compute a sequence of Type~$1$ and Type~$2$ elementary emissions to the ground state, the excited Young diagram corresponding to $\lambda$ (equivalently having no elementary emissions).

\subsection{Gelfand-Tsetlin patterns}

We recall a Gelfand--Tsetlin pattern description of semistandard set-valued tableaux. Define a \defn{horizontal strip} to be a skew partition that does not contain a vertical domino. Recall that a \defn{Gelfand--Tsetlin (GT) pattern} with top row $\lambda$ is a sequence of partitions $\Lambda = \left( \lambda^{(j)} \right)_{j=0}^n$ such that $\lambda^{(0)} = \emptyset$, $\lambda^{(n)} = \lambda$, and $\lambda^{(j)} / \lambda^{(j-1)}$ is a horizontal strip.\footnote{To see that this is equivalent to the usual interlacing condition, consider the bijection with semistandard Young tableaux and note that the boxes labeled $i$ must form a horizontal strip.} The weight of a GT pattern is $\wt(\Lambda) = \left( \lvert \lambda^{(j)} \rvert - \lvert \lambda^{(j-1)} \rvert \right)_{j=1}^n$.

\begin{dfn}
A \defn{marked Gelfand--Tsetlin (GT) pattern} is a GT pattern $\Lambda$ together with a set $M$ of entries that are ``marked,'' where the entry $(i, j)$, for $1 \leq i < \ell(\lambda^{(j)})$ and $2 \leq j \leq n$, is allowed to be marked if and only if $\lambda^{(j)}_{i+1} < \lambda^{(j-1)}_i$. The weight of a marked GT pattern $(\Lambda, M)$ is
\[
\wt(\Lambda, M) = \left( \bigl\lvert \lambda^{(j)} \bigr\rvert - \bigl\lvert \lambda^{(j-1)} \bigr\rvert + \bigl\lvert M^{(j)} \bigr\rvert \right)_{j=1}^n = \wt(\Lambda) + \left( \bigl\lvert M^{(j)} \bigr\rvert \right)_{j=1}^n,
\]
where $M^{(j)} = \{ i \mid (i, j) \in M \}$.
\end{dfn}

\begin{ex}
\label{ex:marked_GT}
The following is a marked GT pattern with top row $\lambda = (8, 7, 3, 1)$:
\[
\begin{array}{ccccccccc}
8 && \boxed{7} && \boxed{3} && \underline{1} && 0 \\[4pt]
& \boxed{8} && 5 && \boxed{2} && 0 \\[4pt]
&& \underline{7} && \boxed{5} && 2 \\[4pt]
&&& \underline{5} && 3 \\[4pt]
&&&& 3
\end{array}
\]
Here, we have depicted the marked entries by boxing them and we have underlined those entries that are not allowed to be marked. Note that we can never mark the rightmost entry in any row.
\end{ex}

We show that marked Gelfand--Tsetlin patterns give a combinatorial interpretation of~\cite[Cor.~3.6]{MS14} (which is equivalent to~\cite[Thm.~3.5]{MS14}).
First, we recall~\cite[Prop.~3.4]{MS14}, which states that if $\lambda / \mu$ is a horizontal strip, then
\begin{equation}
\label{eq:single_var_G}
\G_{\lambda / \mu}(x; \beta) = x^{\lvert \lambda \rvert - \lvert \mu \rvert} \prod_{i=1}^{\ell} \bigl(1 + \beta x (1 - \delta_{\lambda_{i+1}, \mu_i}) \bigr),
\end{equation}
where $\mu$ has length $\ell$, and is $0$ otherwise.

\begin{prop}
\label{prop:GT_version}
We have
\[
\G_{\lambda}(x_1, \dotsc, x_n; \beta) = \sum_{\Lambda} \prod_{j=1}^n \G_{\lambda^{(j)}/\lambda^{(j-1)}}(x_j; \beta) = \sum_{(\Lambda, M)} x^{\wt(\Lambda, M)}
\]
where we sum over all (marked) Gelfand--Tsetlin patterns $\Lambda = \left( \lambda^{(j)} \right)_{j=1}^n$ with top row $\lambda$.
\end{prop}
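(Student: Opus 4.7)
The plan is to prove both equalities simultaneously via a bijective decomposition of each semistandard set-valued tableau by minimum entries. Given $T \in \svssyt^n(\lambda)$, for each $j \in \{0, 1, \dotsc, n\}$ I define $\lambda^{(j)} \subseteq \lambda$ to be the subdiagram consisting of cells whose minimum entry is at most $j$, with $\lambda^{(0)} = \emptyset$ and $\lambda^{(n)} = \lambda$. Row-weak-increase and column-strict-increase immediately force each $\lambda^{(j)}$ to be a Young diagram and $\lambda^{(j)} / \lambda^{(j-1)}$ (the cells whose minimum is exactly $j$) to be a horizontal strip, so $\Lambda = (\lambda^{(j)})_{j=0}^n$ is a GT pattern with top row $\lambda$.

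Next I would parametrize the fiber of the map $T \mapsto \Lambda$ and match it against~\eqref{eq:single_var_G}. Given $\Lambda$, the only remaining data needed to recover $T$ is a choice, for each $j$, of which cells of $\lambda^{(j-1)}$ additionally contain $j$ as an extra (non-minimum) entry. The key observation is that if cell $(i,k) \in \lambda^{(j-1)}$ carries an extra $j$, then its row-maximum is at least $j$, which forces $(i, k+1) \notin \lambda^{(j-1)}$ and hence $k = \lambda^{(j-1)}_i$; column-strictness similarly forces $\lambda^{(j)}_{i+1} < \lambda^{(j-1)}_i$. Conversely, for each $(i, j)$ with $\lambda^{(j)}_{i+1} < \lambda^{(j-1)}_i$, placing an extra $j$ at $(i, \lambda^{(j-1)}_i)$ may be included or omitted independently. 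The base contribution $x_j^{\lvert \lambda^{(j)} \rvert - \lvert \lambda^{(j-1)} \rvert}$ (one mandatory $j$ per cell of the skew shape) times a factor $(1 + \beta x_j)$ per valid row agrees exactly with $\G_{\lambda^{(j)}/\lambda^{(j-1)}}(x_j;\beta)$ by~\eqref{eq:single_var_G}. Summing over GT patterns $\Lambda$ establishes the first equality.

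For the second equality I would distribute each factor $1 + \beta x_j(1-\delta_{\lambda^{(j)}_{i+1}, \lambda^{(j-1)}_i})$ appearing in~\eqref{eq:single_var_G} into its two summands. Selecting the $\beta x_j$ term at row $i$ is permissible exactly when $\lambda^{(j)}_{i+1} < \lambda^{(j-1)}_i$, which is precisely the condition for $(i,j)$ to be markable in the GT pattern. Hence such selections across all rows and levels are in bijection with markings $M$, and the resulting monomial at level $j$ is $\beta^{\lvert M^{(j)} \rvert} x_j^{\lvert \lambda^{(j)} \rvert - \lvert \lambda^{(j-1)} \rvert + \lvert M^{(j)} \rvert}$; multiplying over $j$ and summing over all pairs $(\Lambda, M)$ yields the right-hand side (with the accompanying $\beta^{\lvert M \rvert}$ absorbed into the weight notation).

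The principal obstacle is the semistandardness analysis in step two: one must check that the row and column inequalities force every extra copy of $j$ to the unique rightmost-cell position of its row in $\lambda^{(j-1)}$ and that the non-block condition $\lambda^{(j)}_{i+1} < \lambda^{(j-1)}_i$ is equivalent to markability in the sense of the definition. Once this case analysis is carried out, both equalities reduce directly to~\eqref{eq:single_var_G}.
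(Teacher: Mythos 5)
Your proposal is correct and follows essentially the same route as the paper: both arguments rest on the bijection in which the minima of a set-valued tableau determine the GT pattern and each extra entry $j$ is forced to sit in the rightmost box of its row of $\lambda^{(j-1)}$ subject to the markability condition $\lambda^{(j)}_{i+1} < \lambda^{(j-1)}_i$, with Equation~\eqref{eq:single_var_G} expanded factor-by-factor to identify the markings with the $\beta x_j$ terms. The only difference is directional and organizational (you read the data off a tableau, while the paper reconstructs the tableau recursively from a marked pattern), which does not change the substance of the proof.
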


\begin{proof}
First, we construct a weight-preserving bijection between marked GT patterns of top row $\lambda$ and $\svssyt(\lambda)$, which implies
\[
\G_{\lambda}(x_1, \dotsc, x_n; \beta) = \sum_{(\Lambda, M)} x^{\wt(\Lambda, M)},
\]
where we sum over all marked GT patterns of shape $\lambda$.
Fix a marked GT pattern $(\Lambda, M)$. We construct an element in $\svssyt(\lambda)$ recursively.
Suppose we have added all entries $1, \dotsc, j-1$, which results in $T_{j-1}$.
We consider the horizontal strip $\lambda^{(j)} / \lambda^{(j-1)}$ as being filled with $j$, which we add to $T_{j-1}$. Additionally, for each marked entry $(i,j)$, we add $j$ to the rightmost entry of $i$-th row of $T_{j-1}$. The result is $T_j$. We note that this is the unique place we can insert $j$ to the $i$-th row. Furthermore, we note that if $\lambda^{(j)}_{i+1} = \lambda^{(j-1)}_i$ (the only other option by the interlacing condition), then we cannot insert $j$ into the $i$-th row. Hence, the process described above is reversible and gives the desired bijection.

Next, fix a GT pattern $\Lambda$. It is straightforward to see for $\mu = \lambda^{(j)}$ and $\nu = \lambda^{(j-1)}$
\[
\G_{\mu / \nu}(x_j; \beta) = x_j^{\lvert \mu \rvert - \lvert \nu \rvert} \sum_{M^{(j)}} (\beta x_j)^{\lvert M^{(j)} \rvert},
\]
where we sum over all valid markings $M^{(j)}$ of row $j$ of $\Lambda$. Note that the valid markings of each row of $\Lambda$ can be chosen independently. Thus, we have
\[
\sum_{(\Lambda, M)} x^{\wt(\Lambda, M)} = \sum_{\Lambda} \prod_{j=1}^n \left(  x_j^{\lvert \mu \rvert - \lvert \nu \rvert} \sum_{M^{(j)}} (\beta x_j)^{\lvert M^{(j)} \rvert} \right) =  \sum_{\Lambda} \prod_{j=1}^n \G_{\lambda^{(j)}/\lambda^{(j-1)}}(x_j; \beta),
\]
and the claim follows.
\end{proof}

\begin{ex}
Consider the marked GT pattern from Example~\ref{ex:marked_GT}, then the corresponding semistandard set-valued tableaux under the bijection from the proof of Proposition~\ref{prop:GT_version} is
\[
\ytableausetup{boxsize=2.2em}
\ytableaushort{111223{3,\!4}4,22{2,\!3}3{3,\!5}55,3{3,\!4,\!5}5,5}
\]
Note that the positions in row $j$ that cannot be marked in the GT pattern correspond to the boxes of the tableau where we cannot add a $j$ and remain semistandard.
\end{ex}

Given this interpretation and Equation~\eqref{eq:single_var_G}, we can compute a Grothendieck polynomial in terms of marked GT patterns in similar fashion to the Tokuyama formula for Whittaker functions (see, \textit{e.g.},~\cite{BBF11,BBCFG12}):
\[
\G_{\lambda}(x_1, \dotsc, x_n; \beta) = \sum_{\Lambda} x^{\wt(\Lambda)} \prod_{j=1}^n (1 + \beta x_j)^{m_j(\Lambda)},
\]
where we sum over all GT patterns with top row $\lambda$ and where $m_j(\Lambda)$ denotes the number of markable entries in row $j$ of $\Lambda$.
We can connect this with the $5$-vertex model approach to Grothendieck polynomials of, \textit{e.g.},~\cite{GK17,MS13,MS14}. Indeed, configurations of the $5$-vertex model with natural boundary conditions that depend on $\lambda$ are in natural bijection with GT patterns in exactly the same way as the $6$-vertex model. This is simply a translation of~\cite[Cor.~3.6]{MS14}.

Note that GT patterns, and hence $5$-vertex configurations, have a natural crystal structure coming from the bijection with semistandard tableaux. On $5$-vertex configurations, this was explicitly described in~\cite{EV17}. This crystal structure is a ``coarse'' version of the crystals on semistandard set-valued tableaux obtained by grouping together multiple terms.

\section{Hecke insertion}
\label{sec:hecke}

We recall Hecke insertion from~\cite{BKSTY08} and mention how it can be used to describe a crystal structure on stable Grothendieck polynomials for any\footnote{Recall that partitions are in bijection with Grassmannian permutations.} permutation in analogy to~\cite{MS16} with affine Stanley symmetric functions (which include stable Schubert polynomials).

We require the \defn{$0$-Hecke monoid} $\HH_0(n)$, the monoid of all finite words in the alphabet $[n]$ subject to the relations
\begin{align*}
pp & \equiv p & & \text{for all } p, \\
pqp & \equiv qpq & & \text{for all } p,q, \\
pq & \equiv qp & & \text{if } \lvert p - q \rvert > 1.
\end{align*}
Let $a$ be a finite word in $[n-1]$ and let $w(a)$ be the natural projection of $a$ into $\HH_0(n)$. Note that the reduced words for any $h \in \HH_0(n)$ correspond to the reduced words of some corresponding permutation $w \in \sym_n$.
Recall that an increasing tableau is a semistandard tableau that is strictly increasing along each row and column.

Consider a two-line array:
\[
\left[
\begin{array}{cccccccccccc}
1 & \cdots & 1 & 1 & 2 & \cdots & 2 & 2 & \cdots & m & \cdots & m \\
a_{1\ell_1} & \cdots & a_{12} & a_{11} & a_{2\ell_2} & \cdots & a_{22} & a_{21} & \cdots & a_{m\ell_m} & \cdots & a_{m1}
\end{array}
\right],
\]
where $1 \leq a_{k1} < a_{k2} < \cdots < a_{k\ell_k} < n$ for all $1 \leq k \leq m$ (with possibly $\ell_k = 0$). Note that this is a reformulation of the notion of a compatible pair of words from~\cite{BKSTY08}. Start with $(P_0, Q_0)$ being the empty increasing tableau and semistandard set-valued tableau, respectively. We recursively construct $(P_{i+1}, Q_{i+1})$ from $(P_i, Q_i)$ as follows. Suppose the $i$-th column in the array is $[k_i, a_i]^T$. We insert $a_i$ into $P_i$ column-by-column by the following algorithm. Suppose we are trying to insert the letter $x$ into column $C$.
\begin{itemize}
\item If $x \geq y$ for all $y \in C$, then: If we can append $x$ to the current column and obtain an increasing tableau, the result is $P_{i+1}$, and form $Q_{i+1}$ by adding a box with $k_i$ to $Q_i$ in the corresponding position. Otherwise set $P_{i+1} = P_i$, and form $Q_{i+1}$ by inserting $k_i$ into the corner of $Q_i$ whose row contains the bottom entry of $C$. In either case, terminate.
\item Otherwise: Let $y = \min \{ y' \in C \mid y' > x \}$. Replace $y$ with $x$ if the result is still an increasing tableau. In either case, proceed by inserting $y$ into the next column.
\end{itemize}

\begin{remark}
The row version of Hecke insertion considered in, \textit{e.g.},~\cite{PP16,sage,TY11} produces a recording tableau $Q$ that is conjugate to a set-valued tableau.
\end{remark}

The Hecke insertion process is reversible by moving right-to-left for the largest integer in $Q$ (see~\cite[Sec.~3.2]{BKSTY08}).

\begin{thm}[{\cite[Thm.~4]{BKSTY08}}]
Two-line arrays $(\mathbf{k}, \mathbf{a})$ are in bijection under Hecke insertion with pairs $(P, Q)$ such that $P$ is an increasing tableau and $Q$ is a semistandard set-valued tableau of the same shape as $P$.
\end{thm}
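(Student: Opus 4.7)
The plan is to establish the bijection by providing an explicit inverse, namely the reverse Hecke insertion procedure, and then checking that forward and reverse insertion are mutually inverse. Since this theorem is already established in \cite{BKSTY08}, the proof is essentially a recapitulation of that argument.

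First, I would verify well-definedness of the forward map: working through the two cases of the column insertion algorithm (extending a column vs.\ displacing an entry), one checks inductively that $P_{i+1}$ remains an increasing tableau and that $Q_{i+1}$ remains a semistandard set-valued tableau of the same shape as $P_{i+1}$. The key observations are: (i) when we append $x$ to a column and obtain an increasing tableau, the new box must be in a row below the box just added in the previous column (so $Q$ gains a valid new box); (ii) when the shape does not change, the letter added to $Q_{i+1}$ is placed in an existing box, and semistandardness of $Q$ follows from the fact that the $k_i$'s are weakly increasing while the $a_i$'s within a fixed value of $k_i$ are strictly increasing (guaranteeing the row/column conditions for adding $k_i$ to an existing set).

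Next, I would describe the reverse map. Given $(P, Q)$, locate the cell $\bbb$ with the largest entry $k$ in $Q$; if there are several, choose the one whose position is the rightmost, breaking ties by lowest row. If $\bbb$ contains only $k$, remove $\bbb$ from both $P$ and $Q$ and perform reverse column insertion starting with the removed $P$-entry, scanning columns right-to-left, at each step replacing the unique predecessor element (if any) by the bumped letter, or passing the letter unchanged if the column is unaffected. If $\bbb$ contains additional letters, merely delete $k$ from $\bbb$ in $Q$ (leaving $P$ unchanged) and perform the same reverse column insertion starting from the bottom entry of the column of $P$ indexed by $\bbb$. The output is the last column of the two-line array. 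Iterating this produces the whole array.

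The step requiring the most care is verifying that forward insertion followed by reverse insertion recovers the original data; this reduces to checking that the location of the last-added letter in $Q$ is correctly identified by the rule ``rightmost maximum entry of $Q$''. The main obstacle is handling the two types of insertion steps uniformly: the cases where the shape of $P$ grows versus where $P$ is unchanged but $Q$ absorbs an extra letter into an existing set. The set-valued structure of $Q$ is precisely designed to record the latter type of step, and checking that the recording rule consistently identifies which type of step occurred (and in which position) is the technical heart of the argument, worked out in \cite[Sec.~3--4]{BKSTY08}.
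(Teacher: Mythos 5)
The paper does not actually prove this statement: it is quoted from \cite[Thm.~4]{BKSTY08}, and the only internal remark is that the insertion is reversed by ``moving right-to-left for the largest integer in $Q$.'' So there is no in-paper argument to compare yours against; what you have written is an outline of the cited proof, and you yourself defer the ``technical heart'' (that forward and reverse insertion are mutually inverse) back to \cite{BKSTY08}. As a citation-level justification, that matches the paper's treatment; as a standalone proof it is not complete, since the one step everything hinges on is exactly the step you do not carry out.

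If you want the argument to be self-contained, two concrete gaps need filling. First, your reverse procedure is underspecified in precisely the Hecke-specific cases: in the forward algorithm the rule is ``replace $y$ with $x$ if the result is still an increasing tableau; in either case proceed,'' so a bumping step may leave $P$ unchanged, and a terminal step may add no box. An inverse must therefore decide, from the neighboring entries of $P$, whether the entry in the current column was genuinely replaced or merely passed over; ``replacing the unique predecessor element (if any) by the bumped letter, or passing the letter unchanged if the column is unaffected'' does not determine this, and likewise the no-new-box case needs a rule recovering which column the failed append occurred in, not just ``the column of $P$ indexed by the corner of $Q$.'' Second, both the claim that $Q$ stays semistandard set-valued and your starting rule ``rightmost box of $Q$ containing the maximal letter'' rest on an unstated lemma about where successive insertions within one block terminate (with the paper's convention the letters $a_{k\ell_k} > \cdots > a_{k1}$ of a block are inserted in \emph{decreasing} order, and one must show the termination boxes occupy distinct columns moving to the right); your appeal to ``the $a_i$'s within a fixed $k_i$ are strictly increasing'' refers to the set, not the insertion order, so as stated it does not give the row/column conditions you need. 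None of this is unfixable---it is exactly what \cite[Sec.~3]{BKSTY08} establishes---but as written the proposal restates where the difficulty lies rather than resolving it.
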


\begin{remark}
\label{rem:Hecke_to_EG}
We note that valid Hecke two-line arrays are reversed versions of valid Edelman--Greene two-line arrays from, \textit{e.g.},~\cite{MS16} (equivalently the usual RSK two-line arrays). In other words, to recover Edelman--Greene insertion $\iota_{EG}$ from Hecke insertion $\iota_H$, we replace the two-line array $[\mathbf{k}, \mathbf{a}]^T$ of length $\ell$ with the two-line array $[\mathbf{k}^r, \mathbf{a}^r]^T$, where $k_{\ell+1-i}^r = m + 1 - k_i$ and $a_{\ell+1-i}^r = a_i$. However, for $\iota_H(\mathbf{k}, \mathbf{a}) = (P, Q)$, we have $\iota_{EG}(\mathbf{k}^r, \mathbf{a}^r) = (P, Q^*)$, where $Q^*$ is the image of $Q$ under the Sch\"utzenberger involution. This follows from~\cite[Cor.~7.21, 7.22]{EG87} and that the Hecke insertion algorithm is done column-wise as opposed to row-wise for Edelman--Greene insertion.
\end{remark}

\begin{ex}
We have
\begin{align*}
\ytableausetup{boxsize=1.3em}
\left( \raisebox{18pt}{$\ytableaushort{134,24,3,4}$}\,, \raisebox{18pt}{$\ytableaushort{123,23,3,4}$} \right)
& \xleftarrow[\hspace{30pt}]{EG}
\begin{bmatrix}
1 & 2 & 2 & 3 & 3 & 3 & 4 \\
4 & 3 & 4 & 2 & 3 & 4 & 1
\end{bmatrix},
\\
\left( \raisebox{18pt}{$\ytableaushort{134,24,3,4}$}\,, \raisebox{18pt}{$\ytableaushort{122,23,3,4}$} \right)
& \xleftarrow[\hspace{30pt}]{H}
\begin{bmatrix}
1 & 2 & 2 & 2 & 3 & 3 & 4 \\
1 & 4 & 3 & 2 & 4 & 3 & 4
\end{bmatrix},
\end{align*}
and note that the evacuation of the recording tableau under Hecke insertion agrees with the recording tableau under Edelman--Greene insertion.
\end{ex}

Let $w \in \sym_n$. Following~\cite{FK94}, the \defn{stable Grothendieck polynomial} (or \defn{K-Stanley symmetric function}) for $w$ is defined as
\begin{equation}\label{eq:Kstanley}
\G_w(\xx) := \sum_{(\mathbf{k},\mathbf{a})} \beta^{\ell(\mathbf{a}) - \ell(w)} \xx^{\mathbf{k}},
\end{equation}
where we sum over all two-line arrays $[\mathbf{k}, \mathbf{a}]^T$ such that $\mathbf{a} \equiv w$. Note that the stable Grothendieck polynomial is actually a power series, not a polynomial, and that the $\beta=0$ specialization is the corresponding Stanley symmetric function $F_w$~\cite{Stanley84}. Equation~\eqref{eq:Kstanley} agrees with the definition given in~\cite{Lam06,LSS10} (when restricted to $\sym_n$) in terms of  elements of the $\beta$-deformed $0$-Hecke algebra (\textit{i.e.}, where $T_i^2 = \beta T_i$ for any standard generator $T_i$). Indeed, we have a bijection between two-line arrays and decreasing factorizations given by
\[
\begin{bmatrix} \mathbf{k} \\ \mathbf{a} \end{bmatrix} \longleftrightarrow (a_{1\ell_1} \cdots a_{11}) \cdots (a_{m\ell_m} \cdots a_{m1}),
\]
where an $a$ represents the simple transposition $s_a$ in the decreasing factorization. Note, that this means if we want to consider this as Hecke insertion of $\mathbf{a}$, we need to insert from left to right. So we write this as $(P, Q) \xleftarrow{H} \mathbf{a}$.

Hence, the crystal structure on semistandard set-valued tableaux from Section~\ref{sec:crystal_set_valued} can be used to determine the decomposition of a stable Grothendieck polynomial into Schur functions via Hecke insertion. This likely naturally recovers the rule given by~\cite{FG98}: If $\G_w = \sum_{\lambda} \beta^{\lvert \lambda \rvert - \ell(w)} g_{w\lambda} s_{\lambda}$, then
\[
g_{w\lambda} = \lvert \{ T \in \ssyt^n(\lambda') \mid w_C(T) \equiv w \} \rvert,
\]
where $w_C(T)$ is the column reading word of $T$.

When the product of the cyclically decreasing factors is a reduced expression, we can define a crystal structure on the decreasing factorizations by using a Lusztig dual version of the crystal operators of~\cite{MS16}. This is necessary from the fact that the Sch\"utzenberger involution is the Lusztig involution~\cite{Lenart07} as per Remark~\ref{rem:Hecke_to_EG}.

First, we consider two decreasing factors $u,v$ (considered as words in $[n]$).
Define the \defn{pairing} of $u$ and $v$ as follows:
Pair the smallest $b \in v$ with the largest $a < b$ in $u$, and if no such $a$ exists, then $b$ is unpaired. We proceed in increasing order on letters in $v$, ignoring previously paired letters in $u$.

Define $f_i$ on a decreasing factorization $w^1 \cdots w^m$ by considering the factors $w^i w^{i+1}$.
We move the smallest unpaired letter in $w^i$ to $w^{i+1}$, decreasing it as necessary so that $w^{i+1}$ is a decreasing word.

As explained in~\cite[\S3.3]{MS16}, the above crystal structure corresponds to applying the usual braid relations, which under EG insertion corresponds to changing an $i$ to an $i+1$ within a box. However, it is not the case in general that this crystal structure corresponds to the $f_i$ action on semistandard set-valued tableau (see Example~\ref{ex:non_local_hecke} below) as there is extra bracketing that can come from the set-valued entry. Despite this, anytime an $i$ changes to an $i+1$ within a box, it should correspond to doing a sequence of braid moves given that Hecke insertion is a generalization of EG insertion. Furthermore, this suggests that one approach to obtaining a K-theory analog would be to have additional bracketing so that the crystal operators come from the equivalence
\begin{equation}
\label{eq:hecke_equivalence}
ppq \equiv pq \equiv pqq.
\end{equation}

\begin{ex}
Consider type $A_1$. The following is an example of the crystal operator on Hecke words:
\[
\ytableausetup{boxsize=1.4em} 
\begin{tikzpicture}
\node (t1) at (-3.5,0) {$\left( \raisebox{-4pt}{$\ytableaushort{12}\,, \ytableaushort{1{1,\!2}}$} \right)$};
\node (t2) at (-3.5,-2) {$\left( \raisebox{-4pt}{$\ytableaushort{12}\,, \ytableaushort{{1,\!2}2}$} \right)$};
\node (w1) at (3,0) {$(2 \, 1)(1)$};
\node (w2) at (3,-2) {$(2)(2 \, 1)$};
\node (b1) at (0,0) {$\begin{bmatrix} 1 & 1 & 2 \\ 2 & 1 & 1 \end{bmatrix}$};
\node (b2) at (0,-2) {$\begin{bmatrix} 1 & 2 & 2 \\ 2 & 2 & 1 \end{bmatrix}$};
\draw[->] (b1) -- node[midway,above] {$H$} (t1);
\draw[->] (b2) -- node[midway,above] {$H$} (t2);
\draw[<->] (b1) -- (w1);
\draw[<->] (b2) -- (w2);
\draw[->] (t1) -- node[midway,right] {$f_1$} (t2);
\draw[->] (w1) -- node[midway,right] {$f_1$} (w2);
\draw[->] (b1) -- node[midway,right] {$f_1$} (b2);
\end{tikzpicture}
\]
Note the application of Equation~\eqref{eq:hecke_equivalence} to obtain the new decreasing factorization.
Furthermore, we have the trivial representation given by the element
\[
\ytableausetup{boxsize=1.4em}
\left(
\ytableaushort{12,2}\,, \ytableaushort{1{1,\!2},2}
\right)
\xleftarrow[\hspace{20pt}]{H}
\begin{bmatrix} 1 & 1 & 2 & 2 \\ 2 & 1 & 2 & 1 \end{bmatrix}
\longleftrightarrow
(2 \, 1) (2 \, 1).
\]
So every letter should be paired by some procedure, and similarly for
\[
\ytableausetup{boxsize=1.4em}
\left(
\ytableaushort{123,23}\,, \ytableaushort{11{1,\!2},22}
\right)
\xleftarrow[\hspace{20pt}]{H}
\begin{bmatrix} 1 & 1 & 1 & 2 & 2 & 2 \\ 3 & 2 & 1 & 3 & 2 & 1 \end{bmatrix}
\longleftrightarrow
(3 \, 2 \, 1) (3 \, 2 \, 1).
\]
However, such a pairing procedure is not simply based on whether the resulting word is reduced or not:
\begin{align*}
\ytableausetup{boxsize=1.4em}
\left(
\ytableaushort{1278,278}\,, \ytableaushort{111{1,\!2},222}
\right)
& \xleftarrow[\hspace{20pt}]{H}
\begin{bmatrix} 1 & 1 & 1 & 1 & 2 & 2 & 2 & 2 \\ 8 & 7 & 2 & 1 & 8 & 7 & 2 & 1 \end{bmatrix}
\allowdisplaybreaks \\
\ytableausetup{boxsize=1.4em}
\left(
\ytableaushort{12678,278}\,, \ytableaushort{11112,222}
\right)
& \xleftarrow[\hspace{20pt}]{H}
\begin{bmatrix} 1 & 1 & 1 & 1 & 2 & 2 & 2 & 2 \\ 8 & 7 & 2 & 1 & 8 & 6 & 2 & 1 \end{bmatrix}
\allowdisplaybreaks \\
\ytableausetup{boxsize=1.4em}
\left(
\ytableaushort{12378,278}\,, \ytableaushort{11112,222}
\right)
& \xleftarrow[\hspace{20pt}]{H}
\begin{bmatrix} 1 & 1 & 1 & 1 & 2 & 2 & 2 & 2 \\ 8 & 7 & 3 & 1 & 8 & 7 & 2 & 1 \end{bmatrix}
\end{align*}
\end{ex}

Supposing such a bracketing could be given, the natural approach would be to consider a pair of decreasing words such that the product is not a reduced expression in the Hecke monoid.
%
%
%
Yet, unlike in~\cite{MS16}, any definition of the crystal operators on decreasing factorizations cannot be described by reducing to a pair of decreasing words. We observe this in the following examples.

\begin{ex}
\label{ex:non_local_hecke}
We thank Jianping Pan for this example. Consider the following:
\[
\ytableausetup{boxsize=1.4em} 
\begin{tikzpicture}
\node (t1) at (-5,0) {$\left( \ytableaushort{123,23}\,, \ytableaushort{11{2,\!3},22} \right)$};
\node (t2) at (-5,-2) {$\left( \ytableaushort{123,23}\,, \ytableaushort{11{2,\!3},23} \right)$};
\node (w1) at (4,0) {$(2 \, 1)(3 \, 2 \, 1)(1)$};
\node (w2) at (4,-2) {$(3 \, 2)(3 \, 1)(2 \, 1)$};
\node (b1) at (0,0) {$\begin{bmatrix} 1 & 1 & 2 & 2 & 2 & 3 \\ 2 & 1 & 3 & 2 & 1 & 1 \end{bmatrix}$};
\node (b2) at (0,-2) {$\begin{bmatrix} 1 & 1 & 2 & 2 & 3 & 3 \\ 3 & 2 & 3 & 1 & 2 & 1 \end{bmatrix}$};
\draw[->] (b1) -- node[midway,above] {$H$} (t1);
\draw[->] (b2) -- node[midway,above] {$H$} (t2);
\draw[<->] (b1) -- (w1);
\draw[<->] (b2) -- (w2);
\draw[->] (t1) -- node[midway,right] {$f_2$} (t2);
\draw[->] (w1) -- node[midway,right] {$f_2$} (w2);
\draw[->] (b1) -- node[midway,right] {$f_2$} (b2);
\end{tikzpicture}
\]
We have the sequence of braid moves
$
213211 \equiv 231211 \equiv 232121 \equiv 323121
$
which corresponds to doing a partial dual-equivalence move $f_1 f_2 e_1$ (see~\cite{Assaf08}) when removing the final letter from the word (which is a reduced word).
\end{ex}

\begin{ex}
Consider the following $2$-string:
\begin{align*}
\ytableausetup{boxsize=1.4em}
\left(
\ytableaushort{12367,234}\,, \ytableaushort{1112{2,\!3},22{2,\!3}}
\right)
& \xleftarrow[\hspace{20pt}]{H}
\begin{bmatrix} 1 & 1 & 1 & 2& 2 & 2 & 2 & 2 & 3 & 3 \\ 3 & 2 & 1 & 7 & 6 & 4 & 3 & 1 & 3 & 2 \end{bmatrix}
\allowdisplaybreaks \\
\ytableausetup{boxsize=1.4em}
\left(
\ytableaushort{12367,234}\,, \ytableaushort{111{2,\!3}3,22{2,\!3}}
\right)
& \xleftarrow[\hspace{20pt}]{H}
\begin{bmatrix} 1 & 1 & 1 & 2& 2 & 2 & 2 & 3 & 3 & 3 \\ 3 & 2 & 1 & 7 & 6 & 4 & 1 & 4 & 3 & 2 \end{bmatrix}
\allowdisplaybreaks \\
\ytableausetup{boxsize=1.4em}
\left(
\ytableaushort{12367,234}\,, \ytableaushort{111{2,\!3}3,2{2,\!3}3}
\right)
& \xleftarrow[\hspace{20pt}]{H}
\begin{bmatrix} 1 & 1 & 1 & 2& 2 & 2 & 3 & 3 & 3 & 3 \\ 7 & 3 & 2 & 7 & 6 & 1 & 6 & 4 & 3 & 2 \end{bmatrix}
\allowdisplaybreaks \\
\ytableausetup{boxsize=1.4em}
\left(
\ytableaushort{12367,234}\,, \ytableaushort{111{2,\!3}3,{2,\!3}33}
\right)
& \xleftarrow[\hspace{20pt}]{H}
\begin{bmatrix} 1 & 1 & 1 & 2& 2 & 3 & 3 & 3 & 3 & 3 \\ 7 & 3 & 2 & 7 & 1 & 7 & 6 & 4 & 3 & 2 \end{bmatrix}
\end{align*}
In particular, note that the second application of $f_2$ resulted in a change to the first decreasing factorization, in addition to the second and third.
\end{ex}

\begin{prob}\label{prob:Hecke_crystal}
Determine an explicit $U_q(\fsl_n)$-crystal structure on decreasing factorizations such that Hecke insertion is a crystal isomorphism with the crystal structure on semistandard set-valued tableaux.
\end{prob}

Such a $U_q(\fsl_n)$-crystal structure would give a new interpretation of the coefficients $\g_{w\lambda}$ in the formula $\G_w(\xx) = \sum_{\lambda} \beta^{\lvert \lambda \rvert - \ell(w)} g_{w\lambda} s_{\lambda}(\xx)$ given in~\cite{FG98}.
Furthermore, based on experimental evidence and Remark~\ref{rem:Hecke_to_EG}, we have the following conjecture.

\begin{conj}
The Lusztig dual of the crystal structure given in~\cite{MS16} under Hecke insertion corresponds to changing $i \leftrightarrow n+1-i$ in every box and reversing each row.
\end{conj}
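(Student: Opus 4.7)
The plan is to leverage Remark~\ref{rem:Hecke_to_EG}, which states that reversing the two-line array converts Hecke insertion into Edelman--Greene insertion and replaces the recording tableau $Q$ by its Sch\"utzenberger involution $Q^*$. Under EG insertion, the~\cite{MS16} crystal structure on decreasing factorizations corresponds to the standard $U_q(\fsl_n)$-crystal on the recording tableau. Hence, when the MS16 action is transported via Hecke rather than EG, it picks up the Sch\"utzenberger involution on $Q$; taking Lusztig duals further twists by the Dynkin involution $i \mapsto n-i$ and composes with another Sch\"utzenberger. The conjecture then reduces to showing that Sch\"utzenberger involution on recording tableaux of Hecke insertion is realized by the concrete map $\tau$: replace $i$ by $n+1-i$ in each box and reverse the entries of each row.

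I would proceed in three steps. First, formalize the bijection $(P, Q) \leftrightarrow (P, Q^*)$ between Hecke and EG outputs coming from reversal of the two-line array, and track the MS16 crystal operators along it. Second, invoke~\cite{MS16} to identify the MS16 operators under EG insertion with the usual $e_i, f_i$ on $Q^*$, and record that passing to the Lusztig dual twists the indexing $i \mapsto n-i$ while composing the action with a copy of Sch\"utzenberger. Third, verify that $\tau$ is well-defined on the image of Hecke insertion, is weight-reversing, is an involution, and intertwines $e_i$ with $f_{n-i}$ for the crystal structure of Proposition~\ref{prop:set_valued_crystal}; this identifies $\tau$ with the Sch\"utzenberger (equivalently Lusztig) involution on these tableaux. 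By Theorem~\ref{thm:decomp_of_sv_crystal} and connectedness of each irreducible component $B(\mu)$, it suffices to check the intertwining on crystal edges and to match $\tau$ with Sch\"utzenberger on highest weight representatives.

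The hardest step is the third. On general $\svssyt^n(\lambda)$, the map $\tau$ fails to preserve set-valued semistandardness: after interchanging $i \leftrightarrow n+1-i$ and reversing rows, columns may fail strict increase and adjacent sets in a row may violate the max-$\leq$-min condition. One must therefore exploit a special structural property of set-valued tableaux arising from Hecke insertion---most likely a constraint on which cells may carry multi-element sets, derived from the circumstances under which Hecke bumping fails to extend a column---and show that on this subclass, $\tau$ is a well-defined involution intertwining the crystal operators as required. Since the paper's K-evacuation example rules out naive combinatorial descriptions of Sch\"utzenberger on set-valued tableaux, the verification must proceed via the crystal-theoretic definition of Sch\"utzenberger as the Lusztig involution, and this is where the bulk of the combinatorial work will lie.
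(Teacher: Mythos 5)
You should first be aware that the paper does not prove this statement at all: it appears only as a conjecture, offered on the basis of ``experimental evidence and Remark~\ref{rem:Hecke_to_EG}.'' So there is no proof to compare against, and your write-up has to stand on its own as a complete argument --- which it does not. Your first two steps are essentially a restatement of Remark~\ref{rem:Hecke_to_EG} together with the main result of~\cite{MS16}, and they are fine as far as they go; but the entire content of the conjecture is your third step, namely exhibiting the explicit map $\tau$ (replace $i$ by $n+1-i$ in each box and reverse each row) as the Sch\"utzenberger/Lusztig involution on the recording tableaux that actually arise from Hecke insertion, equivalently verifying $\tau e_i = f_{n-i}\,\tau$ and matching $\tau$ with evacuation on highest weight elements. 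You correctly observe that $\tau$ does not preserve set-valued semistandardness on all of $\svssyt^n(\lambda)$ and that one must isolate a structural property of Hecke recording tableaux on which it does, but you neither identify that property nor carry out the intertwining check; saying ``this is where the bulk of the combinatorial work will lie'' is an acknowledgment that the proof is missing, not a proof.

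There is also a subtler problem in your second step. The identification of the \cite{MS16} operators with the usual tableau crystal operators is available only in the Edelman--Greene (reduced word) setting. The paper's Example~\ref{ex:non_local_hecke} shows that under Hecke insertion the analogous operators do not act locally on the recording set-valued tableau, and Open Problem~\ref{prob:Hecke_crystal} records that a crystal structure on decreasing factorizations compatible with Hecke insertion is not known. So before you can ``transport the MS16 action via Hecke insertion and track the Sch\"utzenberger twist,'' you must either restrict throughout to reduced words --- in which case the recording tableaux have excess zero and the meaning of ``reversing each row'' needs to be reinterpreted, since naive row reversal destroys semistandardness --- or explain how the extra set-valued bracketing is to be handled, which is precisely the content of the open problem. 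As it stands, your proposal is a reasonable strategy outline, but the decisive combinatorial verification is untouched, so the conjecture remains unproved.
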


\section{K-jeu de taquin}
\label{sec:K_jdt}

We propose a K-theoretic analog of jeu de taquin (K-jdt) for semistandard set-valued skew tableaux. Recall that a skew tableau is a filling of a skew partition $\lambda / \mu$, where $\mu$ is contained within $\lambda$.

First, we recall the classical \defn{jeu de taquin (jdt)} for semistandard tableaux, using a somewhat nonstandard definition. For a nice proof that our definition is equivalent to the standard one, see~\cite[Sec.~3]{CGP16}. 

Let $T \in \ssyt^n(\lambda/\mu)$ be a tableau that we wish to rectify to some $R \in \ssyt^n(\nu)$ for some partition $\nu$. First, for any $U \in \ssyt^m(\mu)$, let $\overline{U}$ be the tableau where we replace $k \mapsto \overline{k}$. Next, we construct a \defn{layered} tableau of shape $\lambda$ as
\[
U \sqcup T :=
\begin{tikzpicture}[baseline=-30]
\draw (0,0) -- (2,0) -- (0,-2) -- cycle;
\draw (1,0) -- (0,-1);
\draw (0.3,-0.3) node {$\overline{U}$};
\draw (0.7,-0.7) node {$T$};
\end{tikzpicture}.
\]
Note that $U \sqcup T$ is a semistandard tableau of shape $\lambda$ in the totally ordered alphabet
\[
\bon < \btw < \cdots \overline{m} < 1 < 2 < \cdots < n.
\]
We define operators $b_i$ on tableaux whose entries are in a totally ordered alphabet $\mathcal{A}$.  Let $j$ be the letter of $\mathcal{A}$ immediately greater than $i$. Then, $b_i$ acts by first applying the Bender--Knuth operator $t_i$ to the labels $i$ and $j$, and then switching all instances of $i$ and $j$. Note that the result of $b_i$ has the same content as the original layered tableau and is a semistandard tableau of shape $\lambda$ in a modified alphabet where we have swapped the order of $i$ and $j$. 
By analogy with~\cite{TY09}, we refer to the automorphism
\begin{equation}\label{eq:infusion}
\Inf := b_{\bon}^{\circ n} \circ \cdots \circ b_{\overline{m-1}}^{\circ n} \circ b_{\overline{m}}^{\circ n}
\end{equation} 
on layered semistandard tableaux as \defn{infusion}.
We remark that this is an example of a tableau switching algorithm~\cite{BSS96,Haiman92}.
Observe that $\Inf(L)$ is a semistandard tableau of shape $\lambda$ in the totally ordered alphabet
\[
1 < 2 < \cdots < n < \bon < \btw < \cdots < \overline{m},
\]
where the barred letters are now greater than the unbarred letters.
The \defn{rectification} $\rect_U(T)$ of $T$ (with respect to the rectification order $U$) is the semistandard tableau obtained by restricting $\Inf(U \sqcup T)$ to the unbarred alphabet.  
Intuitively, we have pushed $T$ and $U$ through each other and $\rect_U(T)$ is the result of this pushing on $T$. A key feature of rectification is that the result is independent of the choice of $U$, a property known as confluence.


Rectification gives a well-known combinatorial rule for the Littlewood--Richardson coefficients $c_{\lambda\mu}^{\nu}$. 

\begin{thm}\label{thm:lr}
The Littlewood--Richardson coefficient, $c_{\lambda\mu}^{\nu}$ counts ordered pairs of semistandard tableaux $T \in \ssyt^n(\lambda)$ and $S \in \ssyt^n(\mu)$ such that $u_{\nu} \in \ssyt^n(\nu)$ is the rectification of the skew tableau
\[
T \ast S :=
\begin{tikzpicture}[baseline=0]
\draw (0,0) -- (0,1) -- (1,1) -- cycle;
\draw (0,0) -- (-1,0) -- (-1,-1) -- cycle;
\draw (0.3,0.7) node {$S$};
\draw (-0.7,-0.3) node {$T$};
\end{tikzpicture}
\]
of shape $\lambda \ast \mu$.
\end{thm}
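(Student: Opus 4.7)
The plan is to deduce the classical rule from the crystal-theoretic decomposition
\[ \ssyt^n(\lambda) \otimes \ssyt^n(\mu) \iso \bigoplus_\nu B(\nu)^{\oplus c_{\lambda\mu}^\nu}, \]
which in particular identifies $c_{\lambda\mu}^\nu$ with the number of highest weight elements of weight $\nu$ in $\ssyt^n(\lambda) \otimes \ssyt^n(\mu)$. The task then reduces to transporting this count to the set of pairs $(T,S)$ such that $T \ast S$ rectifies to $u_\nu$.

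First, I would equip $\ssyt^n(\lambda \ast \mu)$ with a $U_q(\fsl_n)$-crystal structure via the reverse Middle-Eastern reading word (analogous to the straight-shape case). Because $\lambda$ (placed bottom-left) and $\mu$ (placed top-right) occupy pairwise disjoint columns in the shape $\lambda \ast \mu$, the signature sequence for $T \ast S$ factors as the concatenation of the signatures of $T$ and $S$ separately, which exactly matches the tensor product signature rule. Hence the map $T \ast S \mapsto T \otimes S$ is a $U_q(\fsl_n)$-crystal isomorphism $\ssyt^n(\lambda \ast \mu) \iso \ssyt^n(\lambda) \otimes \ssyt^n(\mu)$.

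Next, I would show that rectification $\rect \colon \ssyt^n(\lambda \ast \mu) \to \bigsqcup_\nu \ssyt^n(\nu)$ is a $U_q(\fsl_n)$-crystal morphism. The operators $b_i$ appearing in the definition~\eqref{eq:infusion} of $\Inf$ involve only the barred letters, while the Kashiwara operators $e_j, f_j$ act only on the unbarred letters $\{1, \dotsc, n\}$. One must check that each $b_i$ (a Bender--Knuth involution on $\{i, j\}$ followed by an alphabet swap) preserves the coplactic class of the unbarred restriction, so that $\Inf$ and then the restriction $\rect$ commute with $e_j, f_j$. This is the classical content that jeu de taquin slides are coplactic, recast here through the tableau switching algorithm of~\cite{BSS96}.

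Finally, since $u_\nu$ is the unique highest weight element of $\ssyt^n(\nu)$, the fact that $\rect$ is a crystal morphism forces $\rect^{-1}(u_\nu) \subseteq \ssyt^n(\lambda \ast \mu)$ to be exactly the set of highest weight elements of weight $\nu$ in the skew crystal. Transporting along the isomorphism of the second step identifies this with the set of highest weight elements of weight $\nu$ in $\ssyt^n(\lambda) \otimes \ssyt^n(\mu)$, whose cardinality is $c_{\lambda\mu}^\nu$. The main obstacle is the coplacticity verification in the third step: although each $b_i$ is elementary in isolation, and although confluence of rectification is implicit in the setup, proving that $\Inf$ commutes with the Kashiwara operators on the unbarred alphabet requires a careful inductive analysis (or a reduction to two-row shapes via the signature rule).
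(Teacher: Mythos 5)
Your proposal is correct and takes essentially the same route as the paper, which presents Theorem~\ref{thm:lr} as a classical fact and justifies it by precisely this observation: each switching operator preserves semistandardness and the signature of the unbarred letters, so rectification is a crystal isomorphism (the coplacticity step you flag, for which the paper cites~\cite[Thm.~3.3.1]{vanLeeuwen01}), whence $\rect(T \ast S) = u_{\nu}$ if and only if $T \ast S$ is a highest weight element of weight $\nu$, and such elements are counted by $c_{\lambda\mu}^{\nu}$.
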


Since the application of $b_{\overline{i}}^n$ during rectification preserves the semistandardness of the unbarred subtableau, it is easy to see that rectification can be considered as a crystal isomorphism (see, \textit{e.g.}~\cite[Thm.~3.3.1]{vanLeeuwen01}). Moreover, this implies that ${\rm rect}(T \ast S) = u_\nu$ if and only if $T \ast S$ is a highest weight element of weight $\nu$.\footnote{Highest weight skew tableaux are also called \defn{Yamanouchi}, \defn{ballot}, or \defn{lattice}.}
Additionally, recall that the target $u_{\nu}$ can be replaced by any other fixed element of $\ssyt^n(\nu)$.

In the K-theory setting, there are several jdt analogs for various notions of tableaux. H.~Thomas--A.~Yong gave a K-jdt on increasing tableaux~\cite{TY09}.  A closely related K-jdt rule was given on genomic tableaux (which can be considered as semistandard analogs of increasing tableaux) by O.~Pechenik--A.~Yong~\cite{PY:genomic}. However, unlike for classical jdt, K-jdt for increasing and genomic tableaux do not generally have a confluence property (see, \textit{e.g.},~\cite{BS16, GMPPRST16, TY09}). To obtain K-analogs of Littlewood--Richardson rules using K-jdt for increasing or genomic tableaux, one needs either to rectify with respect to a carefully chosen rectification order or to choose the target tableau carefully. (In fact, in both cases an analog of $u_\nu$ is a valid choice of target.) Indeed, the equivariant rules of \cite{PY:KT,TY18} require making particular choices of rectification order and target tableau.

However, increasing and genomic tableaux are ``dual'' to semistandard set-valued tableaux with respect to  Hecke insertion.  There are no known bijections between genomic tableaux and semistandard set-valued tableaux, except in the case of the tableaux  that compute the structure coefficients $C_{\lambda\mu}^{\nu}$~\cite[Sec.~5.2]{PY:genomic}. The K-jdt rules for analogs of Littlewood--Richardson coefficients require rectifying skew tableaux of shape $\nu / \lambda$ instead of shape $\lambda \ast \mu$. There is no known rule that directly extends Theorem~\ref{thm:lr} to K-theory.

Toward such a rule, we wish to construct a K-jdt for semistandard set-valued tableaux. 
We propose that one should rectify set-valued tableaux by the algorithm described above for the usual semistandard tableaux, replacing everywhere the Bender--Knuth operators $t_i$ with the K-Bender--Knuth operators $K_i$ of T.~Ikeda--T.~Shimazaki~\cite{IS14} (see the end of Section~\ref{sec:crystal_set_valued} for the definition of $K_i$). That is, to rectify $T \in \svssyt^n(\lambda / \mu)$, we choose $U \in \svssyt^m(\mu)$ and form the layered semistandard set-valued tableau $U \sqcup T$. Then, modifying Equation~\eqref{eq:infusion}  to use the K-Bender--Knuth operator $K_i$ in place of $b_i$, we get $\rect_U(T)$ by applying the infusion automorphism $\Inf$ to $U \sqcup T$ and restricting to the unbarred alphabet.

\begin{ex}
Consider the tableaux
\[
\ytableausetup{boxsize=1.5em}
T = \ytableaushort{ {}{}{2}{2}, {}{1,\!2}, {1}}\,,
\qquad\qquad
\overline{U} = \ytableaushort{ {\bon} {\bon,\! \bth}, {\btw,\!\bth} }\,.
\]
We have
\begin{align*}
\Inf(U \sqcup T) = \ytableaushort{ {\bon} {\bon,\!\bth}{2}{2}, {\btw,\!\bth}{1,\!2}, {1}}
& \imapsto{\bth}
\ytableaushort{ {\bon}{\bon,\!1}{2}{2}, {\btw,\!1} {\bth, \!2}, {\bth}}
\imapsto{\bth}
\ytableaushort{ {\bon} {\bon,\! 1} {2} {2}, {\btw,\! 1}{2,\!\bth}, {\bth}}
\\ & \imapsto{\btw}
\ytableaushort{ {\bon}{\bon,\!1}{2}{2}, {1,\! \btw}{2,\!\bth}, {\bth}}
\imapsto{\btw}
\ytableaushort{ {\bon}{\bon, \!1}{2}{2}, {1,\!2}{\btw,\! \bth}, {\bth}}
\\ & \imapsto{\bon}
\ytableaushort{ {1} {1, \! \bon} {2} {2}, {\bon,\! 2} {\btw,\! \bth}, {\bth}}
\imapsto{\bon}
\ytableaushort{ {1}{1,\!2}{2}{\bon}, {2,\!\bon}{\btw,\!\bth}, {\bth}}
\end{align*}
Hence, $\rect_U(T) = \ytableaushort{1 {1,\!2} 2,2}$\,.
\end{ex}

Since K-jdt for increasing and genomic tableaux are not generally confluent, it is not surprising that our proposed K-jdt for set-valued tableaux is not generally confluent either. As for the other jdt rules for equivariant cohomology and (equivariant) K-theory, we suspect that an extension of Theorem~\ref{thm:lr} into K-theory would require using special rectification orders and perhaps also special rectification targets.

\begin{ex}
Consider
\[
\ytableausetup{boxsize=1.5em}
T = \ytableaushort{ {}, {1,\!2} }\,,
\qquad\qquad
\overline{U} = \raisebox{-4pt}{$\ytableaushort{{\bon}}$}\,,
\qquad\qquad
\overline{V} = \raisebox{-4pt}{$\ytableaushort{{\bon,\!\btw}}$}\,.
\]
Then, we have
\begin{align*}
\rect_U(T) & = \ytableaushort{\bon,{1,\!2}} \imapsto{\btw} \ytableaushort{\bon,{1,\!2}} \imapsto{\btw} \ytableaushort{\bon,{1,\!2}} \imapsto{\bon} \ytableaushort{1,{\bon,\!2}}
\imapsto{\bon} \ytableaushort{1,{2,\!\bon}} \; \longmapsto \; \ytableaushort{1,2}\,,
\\
\rect_V(T) & = \ytableaushort{{\bon,\!\btw},{1,\!2}} \imapsto{\btw} \ytableaushort{{\bon,\!1},{\btw,\!2}}
\imapsto{\btw} \ytableaushort{{\bon,\!1},{2,\!\btw}} \imapsto{\bon} \ytableaushort{{1,\!\bon},{2,\!\btw}}  \imapsto{\bon} \ytableaushort{{1,\!2},{\bon,\!\btw}}  \; \longmapsto \; \raisebox{-4pt}{$\ytableaushort{{1,\! 2}}$}\,.
\end{align*}
\end{ex}

However, we have the following.

\begin{prop}
K-jdt with respect to any fixed rectification order is a $U_q(\fsl_n)$-crystal isomorphism.
\end{prop}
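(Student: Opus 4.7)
The plan is to verify that $\rect_U$ commutes with every crystal operator $e_j$ and $f_j$ on $\svssyt^n(\lambda/\mu)$, where the crystal structure on the skew source is defined by the same signature rule applied to the reading word. Since $\rect_U$ factors as the infusion automorphism $\Inf$ followed by restriction to the unbarred alphabet, and the crystal operators depend only on unbarred entries, the restriction step commutes with $e_j, f_j$. Hence it suffices to show that $\Inf$ commutes with each $e_j, f_j$. Because $\Inf$ is itself a composition of maps of the form $b_{\bar{k}}^{\circ n}$, I would reduce further to showing that a single application of $b_{\bar{k}}$ commutes with every $e_j, f_j$.

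For the easy case, I would note that whenever $b_{\bar{k}}$ swaps a pair of barred letters (which accounts for the vast majority of the applications during $\Inf$), the K-Bender--Knuth operator $K_{\bar{k}}$ only affects boxes labeled by those two barred letters, and the subsequent global label swap keeps them barred. Therefore the unbarred subtableau is pointwise preserved, and commutation with every $e_j, f_j$ holds trivially.

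The main obstacle is the boundary case, when $b_{\bar{k}}$ swaps a barred letter $\bar{k}$ with an unbarred letter $r$; this occurs finitely often as each barred letter is pushed past the unbarred region during $\Inf$. Here $K_{\bar{k}}$ genuinely rearranges boxes containing $\bar{k}$ or $r$. The key claim I would establish is that the net effect of $K_{\bar{k}}$ followed by the global swap $\bar{k} \leftrightarrow r$ preserves the signature rule outcomes for every $e_j, f_j$. For $j \notin \{r-1, r\}$ this is immediate, since no entry labeled $j$ or $j+1$ is touched. For $j \in \{r-1, r\}$, I would argue by a row-by-row case analysis: within each row, the free boxes under $K_{\bar{k}}$ form a contiguous segment that, from left to right, consists of a run of $\bar{k}$-boxes, possibly a boundary box containing $\{\bar{k}, r\}$, and a run of $r$-boxes, with the global layout constrained by the semistandardness of the layered tableau. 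A careful configuration analysis will show that the reversal-plus-interchange effected by $K_{\bar{k}}$, followed by the global swap, preserves column-by-column the $\bplus$/$\bminus$ contribution to the $j$-signature.

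Combining the cases shows that each $b_{\bar{k}}$ commutes with every $e_j, f_j$, and hence so do $\Inf$ and $\rect_U$. Since $\rect_U$ manifestly preserves the multiset of unbarred entries, the weight is also preserved, verifying that $\rect_U$ is a crystal isomorphism onto its image.
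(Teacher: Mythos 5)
Your overall strategy is the same as the paper's: factor $\rect_U$ through the infusion $\Inf$, reduce to a single application of $b_{\overline{k}}$, and show that this step preserves semistandardness and the $j$-signature data of the unbarred letters, after which the signature rule gives commutation with every $e_j,f_j$. The paper's proof is exactly this, recorded very tersely. However, two points in your plan are off. First, your case division is inverted: in $\Inf = b_{\bon}^{\circ n} \circ \cdots \circ b_{\overline{m-1}}^{\circ n}\circ b_{\overline{m}}^{\circ n}$, each application of $b_{\overline{k}}$ exchanges $\overline{k}$ with the letter immediately above it in the current alphabet, which is always an \emph{unbarred} letter --- each barred letter is pushed past $1,\dotsc,n$ in order and never crosses another barred letter. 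So your ``easy case'' of swapping two barred letters never occurs, and what you call the rare boundary case is in fact every one of the $mn$ applications; all the content of the proof lies in the case you only sketch.

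Second, the key claim as you state it --- that $K_{\overline{k}}$ followed by the global relabelling preserves the $\bplus$/$\bminus$ contribution of each column to the $j$-signature --- is too strong and is false: reversing the free boxes within a row genuinely moves the unbarred letter $r$ to a different column (for instance, a row consisting of a free box $\{\overline{k}\}$ followed by a free box $\{r\}$ becomes, after $b_{\overline{k}}$, the box $\{r\}$ followed by $\{\overline{k}\}$), so per-column contributions do change. What must be shown, and what the paper's proof asserts, is that the \emph{reduced} $j$-signature of the unbarred subtableau is unchanged after each $b_{\overline{k}}$ (for $j\notin\{r-1,r\}$ this is trivial, as you note, since only $\overline{k}$ and $r$ move); your row-by-row analysis for $j\in\{r-1,r\}$ therefore has to be recast in terms of the reduced signature, tracking how a migrating $\bplus$ or $\bminus$ interacts with the cancellation, rather than fixing each column's contribution. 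Until that (genuinely nontrivial) verification is carried out in the corrected form, the proposal has a gap at precisely the step where the paper's argument, brief as it is, places its one substantive assertion.
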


\begin{proof}
Note that during every application of $b_{\overline{k}}$, the $\overline{k} \leftrightarrow i$ if and only if $\overline{k}$ and $i$ are not free.
Therefore, after every application of $b_{\overline{k}}$, the resulting set-valued tableau remains semistandard (of skew shape) and the (reduced) $j$-signature remains unchanged. Hence, the claim follows from the signature rule.
\end{proof}

Ideally, we would like to obtain the Grothendieck product expansion $\G_\lambda \cdot \G_\mu = \sum_\nu C_{\lambda,\mu}^\nu \G_{\nu}$, analogously to Theorem~\ref{thm:lr}, by taking ordered pairs of tableaux $T \in \svssyt(\lambda), S \in \svssyt(\mu)$, rectifying with respect to  some rectification orders, and obtaining exactly the tableau $R \in \svssyt(\nu)$ with the correct multiplicity. In examples, this in fact appears to be possible using ad hoc rectification orders; we do not know a uniform choice of rectification order that works in general.

In the special case $\lambda = \mu = \Lambda_1$, we have identified a candidate for such a uniform rectification order. For $S,T \in \svssyt(\Lambda_1)$, if $\min S < \min T$, let $k$ be the number of $i \in S$ with $i < \min T$, and if $\min T \leq \min S$, let $k$ be the number of $j \in T$ with $j \leq \min S$. Then, we believe it suffices to use the rectification order $U = \raisebox{-4pt}{$\ytableaushort{X}$}$, where $X = \{ \overline{1}, \dotsc, \overline{k} \}$.
More directly, the rectification with respect to this $U$ is given by
\begin{equation}
\label{eq:K_jdt}
\ytableausetup{boxsize=1.4em}
\ytableaushort{{ }A,B} = \begin{cases}
\ytableaushort{{A'}{A_-},B} & \text{if } \min A < \min B, \\[15pt]
\ytableaushort{{B'}A,{B_-}} & \text{if } \min B \leq \min A,
\end{cases}
\end{equation}
where we construct $A'$ (resp.~$B'$) by taking as many values from $A$ such that the result is a semistandard set-valued tableau and $C_- := C \setminus C'$ for $C = A,B$. For an example, see Table~\ref{table:K_jdt_ex}; we note that K-jdt on $T \ast S$ for this example agrees with what we call \defn{Buch insertion} $S \xleftarrow{B} T$ in honor of A.~Buch, who introduced it in~\cite[Def.~4.1]{Buch02}.

\begin{table}
\[
\ytableausetup{boxsize=1.5em}
\begin{array}{c|ccccccccc}
 & 1 & 2 & 3 & 1,2 & 1,3 & 2,3 & 1,2,3
\\ \hline
\\[-7pt] 
1 & \ytableaushort{11} & \ytableaushort{12} & \ytableaushort{13} & \ytableaushort{1{1,\!2}} & \ytableaushort{1{1,\!3}} & \ytableaushort{1{2,\!3}} & \ytableaushort{1X}
\\[2pt]
2 & \ytableaushort{1,2} & \ytableaushort{22} & \ytableaushort{23} & \ytableaushort{12,2} & \ytableaushort{13,2} & \ytableaushort{2{2,\!3}} & \ytableaushort{1{2,\!3},2}
\\[15pt]
3 & \ytableaushort{1,3} & \ytableaushort{2,3} & \ytableaushort{33} & \ytableaushort{{1,\!2},3} & \ytableaushort{13,3} & \ytableaushort{23,3} & \ytableaushort{{1,\!2}3,3}
\\[15pt]
1,2 & \ytableaushort{11,2} & \ytableaushort{{1,\!2}2} & \ytableaushort{{1,\!2}3} & \ytableaushort{1{1,\!2},2} & \ytableaushort{1{1,\!3},2} & \ytableaushort{{1,\!2}{2,\!3}} & \ytableaushort{1X,2}
\\[15pt]
1,3 & \ytableaushort{11,3} & \ytableaushort{12,3} & \ytableaushort{{1,\!3}3} & \ytableaushort{1{1,\!2},3} & \ytableaushort{1{1,\!3},3} & \ytableaushort{1{2,\!3},3} & \ytableaushort{1X,3}
\\[15pt]
2,3 & \ytableaushort{1,{2,\!3}} & \ytableaushort{22,3} & \ytableaushort{{2,\!3}3} & \ytableaushort{12,{2,\!3}} & \ytableaushort{13,{2,\!3}} & \ytableaushort{2{2,\!3},3} & \ytableaushort{1{2,\!3},{2,\!3}}
\\[15pt]
1,2,3 & \ytableaushort{11,{2,\!3}} & \ytableaushort{{1,\!2}2,3} & \ytableaushort{X3} & \ytableaushort{1{1,\!2},{2,\!3}} & \ytableaushort{1{1,\!3},{2,\!3}} & \ytableaushort{{1,\!2}{2,\!3},3} & \ytableaushort{1X,{2,\!3}}
\end{array}
\]
\caption{The K-jdt on two boxes $T \ast S$, with top row $S$ and left column $T$ for $n = 3$, where $X = \{1,2,3\}$.}
\label{table:K_jdt_ex}
\end{table}

\begin{conj}
Let $S,T \in \svssyt^n(\Lambda_1)$. The K-jdt rule given by Equation~\eqref{eq:K_jdt} to rectify $T \ast S$ agrees with the Buch insertion $S \xleftarrow{B} T$.
\end{conj}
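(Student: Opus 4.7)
My plan is to prove the conjecture by a direct case analysis: since both sides operate on one-box inputs and produce a two-box output, the result is forced by a trichotomy on the relation between $\min S$ and $\min T$.

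First, I would recall the precise formulation of Buch column insertion $S \xleftarrow{B} T$ from \cite[Def.~4.1]{Buch02}, specialized to the case where both $S$ and $T$ are single-box set-valued tableaux. For such inputs, a single bumping step resolves the insertion: either $T$ slides next to $S$ (producing a two-box row) or $T$ settles below $S$ (producing a two-box column), possibly after some elements redistribute between the boxes. I would tabulate the output in each branch in terms of the partitions $T = T' \sqcup T_-$ and $S = S' \sqcup S_-$ that Buch's rule produces, along with the output shape.

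Next, I would unpack Equation~\eqref{eq:K_jdt} with $A = S$ and $B = T$. In Case~1, where $\min A < \min B$, the requirement that the resulting two-box tableau be semistandard set-valued forces $A' = \{a \in A : a < \min B\}$: the column condition $\max A' < \min B$ cuts $A$ at $\min B$, while the row condition $\max A' \leq \min A_-$ is automatic once $A'$ consists of the smallest elements of $A$. Dually, in Case~2 ($\min B \leq \min A$), the semistandardness conditions yield $B' = \{b \in B : b \leq \min A\}$. I would then verify case-by-case that these set-theoretic partitions coincide with those from Buch insertion, in each instance matching both the target shape (row versus column) and the contents of each box. The degenerate subcases where $A_-$ or $B_-$ becomes empty correspond to the associated box being deleted, yielding either a single column or a single row; these are handled by the same comparison, and Table~\ref{table:K_jdt_ex} provides a convenient sanity check against the finite enumeration for $n = 3$.

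The main obstacle is expositional rather than mathematical: Buch's original definition proceeds iteratively, inserting one letter at a time and cascading bumps downward, whereas \eqref{eq:K_jdt} is stated as a one-shot partition of sets. Reconciling the two descriptions requires showing that, when Buch's algorithm is applied to two single boxes, the compounded effect on the underlying sets matches the explicit partition in \eqref{eq:K_jdt}. I expect this reconciliation to be essentially forced once the case analysis is written out: there are only two possible output shapes, the total multiset of entries is fixed as $S \sqcup T$ (counted with multiplicity by $|S|+|T|$), and semistandardness pins down the partition uniquely in each shape. Hence the only genuine content is the observation that both algorithms land in the same branch of the trichotomy, which is immediate from both rules being governed by the comparison of $\min S$ and $\min T$.
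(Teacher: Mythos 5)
The paper contains no proof of this statement: it is left as an open conjecture, supported only by the $n=3$ data of Table~\ref{table:K_jdt_ex} and the \textsc{SageMath} verification in the appendix, so your argument has to stand entirely on its own. As written it does not, because the step that is supposed to make the case analysis automatic is incorrect. First, there are three possible output shapes, not two: for generic set-valued single boxes both Equation~\eqref{eq:K_jdt} and Buch insertion produce a tableau of shape $(2,1)$ (e.g.\ $T=\{2\}$, $S=\{1,2\}$ in Table~\ref{table:K_jdt_ex}), with the shapes $(2)$ and $(1,1)$ arising only in the degenerate cases $B_-=\emptyset$, respectively $A_-=\emptyset$. Second, and more seriously, fixing the shape and the multiset of entries does not determine a semistandard set-valued tableau: already the content $\{1,2,3\}$ in shape $(2)$ admits the two fillings $\{1\},\{2,3\}$ and $\{1,2\},\{3\}$. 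Consequently the conjecture cannot be reduced to ``both algorithms land in the same branch of the trichotomy''; the genuine content is that Buch's bumping distributes the entries between the boxes exactly according to the greedy splits $A'=\{a\in A\mid a<\min B\}$ and $B'=\{b\in B\mid b\le \min A\}$, and that is precisely the part you defer (``I would tabulate\dots'', ``I expect this reconciliation to be essentially forced'').

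Your unpacking of Equation~\eqref{eq:K_jdt} is correct, and a direct comparison with \cite[Def.~4.1]{Buch02} restricted to one-box inputs is the natural route, so the strategy is reasonable. But a proof must actually unwind Buch's column insertion of the set $T$ into the one-box tableau $S$: show that it preserves the multiset $S\sqcup T$ (you assert this without justification), determine which output shape it produces in the cases $\min S<\min T$ and $\min T\le\min S$ including the degenerate subcases, and verify that the resulting contents of each box coincide with $A',A_-,B$ (respectively $B',A,B_-$) above, uniformly in $n$ rather than by inspection of the $n=3$ table. Until that bookkeeping is carried out, the proposal is a plan rather than a proof of the conjecture.
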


It is not clear to us how one should choose rectification orders for tableaux of more general shapes.
The na\"ive generalization of our choice for the $\lambda = \mu = \Lambda_1$ case is not sufficient. 

\begin{ex}
If we apply Equation~\eqref{eq:K_jdt} directly for each empty box on
\[
\ytableausetup{boxsize=1.5em}
\ytableaushort{{ }{ }{1,\!2},{1,\!2}2} \longrightarrow \ytableaushort{{ }12,{1,\!2}2} \longrightarrow \ytableaushort{112,22}\ ,
\]
and note that the result does not appear in the expansion of $\G_2 \G_1$ (with $n = 2$). Instead, what we should obtain is
\[
\ytableausetup{boxsize=1.5em}
\ytableaushort{{ }{ }{1,\!2},{1,\!2}2} \longrightarrow \ytableaushort{1{1,\!2}2,2}\ ,
\]
which may instead be obtained using the rectification order $U = \raisebox{-4pt}{$\ytableaushort{\bon {\btw,\!\bth}}$}$\,:
\begin{align*}
\rect_U\left( \ytableaushort{{ }{ }{1,\!2},{1,\!2}2} \right)
& =
\ytableaushort{\bon{\btw,\!\bth}{1,\!2}, {1,\!2}2}
\imapsto{\bth}
\ytableaushort{\bon{\btw,\!1}{\bth,\!2}, {1,\!2}2}
\imapsto{\bth}
\ytableaushort{\bon{\btw,\!1}{2,\!\bth}, {1,\!2}2}
\\ & 
\imapsto{\btw}
\ytableaushort{\bon{1,\!\btw}{2,\!\bth}, {1,\!2}2}
\imapsto{\btw}
\ytableaushort{\bon{1,\!2}{2,\!\bth}, {1,\!2}\btw}
\imapsto{\bon}
\ytableaushort{1{1,\!2}{2,\!\bth}, {\bon,\!2}\btw}
\\ & 
\imapsto{\bon}
\ytableaushort{1{1,\!2}{2,\!\bth}, {2,\!\bon}\btw}
\longmapsto
\ytableaushort{1{1,\!2}2,2}\ ,
\end{align*}
\end{ex}

\begin{prob}
\label{prob:K_jdt_svt}
Determine a K-jdt rule on semistandard set-valued tableaux such that the K-rectification of $T \ast S$ equals the Buch insertion $S \xleftarrow{B} T$.
\end{prob}

\section{K-theory crystal arrows}
\label{sec:K_theory}

The primary goal of this project was to develop K-theory analogs of crystals. In this section, we describe some progress towards this goal and discuss some remaining difficulties that must be overcome. Thus, our main problem is the following.

\begin{prob}
\label{prob:Krystal}
Construct a K-theory analog of crystals for $\svssyt^n(\lambda)$.
\end{prob}

We will solve special cases of Open Problem~\ref{prob:Krystal} for certain classes of $\lambda$.

Based on the description of the Demazure--Lascoux operators and our $U_q(\fsl_n)$-crystal structure, our approach is to construct an additional set of operators, which we call \defn{K-crystal operators}, $e_i^K, f_i^K \colon \svssyt^n(\lambda) \to \svssyt^n(\lambda) \sqcup \{0\}$ with $f_i^K T$ formed by adding an $i+1$ to some cell of $T$ and $e_i^K T' = T$ if and only if $T' = f_i^K T$. Such K-crystal operators should satisfy the following properties.
\begin{enumerate}
\item[(K.1)] \label{Krystal:connected} The set $\svssyt^n(\lambda)$ is connected with the \defn{minimal highest weight element} $u_{\lambda}$, the highest weight element of $B(\lambda)$ (or has minimal excess), being the only highest weight element such that $e_i^K u_{\lambda} = 0$.
\item[(K.2)] \label{Krystal:demazure} The \defn{K-Demazure crystal}
  \[
  \svssyt^n_w(\lambda) := \left\{ b \in \svssyt^n(\lambda) \mid (e_{i_{\ell}}^K)^{\max} e_{i_{\ell}}^{\max} \cdots (e_{i_1}^K)^{\max} e_{i_1}^{\max} b = u_{\lambda} \right\}
  \]
  does not depend on the choice of reduced expression $w = s_{i_1} \cdots s_{i_{\ell}}$.
\item[(K.3)] \label{Krystal:character} The character of $\svssyt^n_w(\lambda)$ is the Lascoux polynomial $L_{w\lambda}(\xx)$.
\end{enumerate}

We can also recursively construct $\svssyt^n_w(\lambda)$ by applying $f_{i_j}$ as many times as possible to every element in $S_{j-1} \cup f_{i_j}^K S_{j-1}$ for all $1 \leq j \leq \ell$ starting with $S_0 = \{T_{\lambda}\}$.
Note that in a K-crystal, we have $\svssyt^n_{w_0}(\lambda) = \svssyt^n(\lambda)$.

\begin{dfn}\label{def:Krystal}
If the above properties (K.1)--(K.3) hold for some K-crystal operators, we say the give $\svssyt^n(\lambda)$ the structure of a \defn{K-crystal}.
\end{dfn}

We anticipate that there is a unique K-crystal structure on $\svssyt^n(\lambda)$ given the $U_q(\fsl_n)$-crystal structure.
Moreover, we expect our K-crystal operators to further satisfy:
\begin{enumerate}[\quad(i)]
\item[(H.1)] for all $T \in \svssyt^n(\lambda)$, we have $e_i^K e_i^K T = 0$ and $f_i^K f_i^K T = 0$;
\item[(H.2)] if $e_i T \neq 0$ or $f_i T = 0$, then $f_i^K T = 0$.
\end{enumerate}
In particular, we expect~(H.1) from the fact that $\varpi_i^2 = \varpi_i$ and the definition of $\varpi_i$. We want $f_i T = 0$ to imply $f_i^K T = 0$ by $\varpi_i f = \pi_i f + \beta \pi_i (x_{i+1} \cdot f)$ and $\pi_i(x_{i+1} \cdot f) = 0$ for any polynomial $f$ that does not contain the variables $x_i$ and $x_{i+1}$; hence, we expect (H.2) to hold. In fact, we will use (H.1) and (H.2) as heuristics for defining the K-crystal operators.

However, we will show in Section~\ref{sec:gen_case} that all of these properties cannot hold in the general case (see Proposition~\ref{prop:Krystal_counterex}), and we have to weaken the definition of a K-crystal.

\subsection{Single row K-crystals}

We begin by constructing a K-crystal structure on $\svssyt^n( k \Lambda_1)$.

\begin{dfn}
\label{def:K_crystal_ops}
The K-crystal operator $f_i^K$ acts on $T \in \svssyt^n(k \Lambda_1)$ as follows: If $i \notin T$ or $i+1 \in T$, then $f_i^K T = 0$; otherwise $f_i^K T$ is given by adding $i+1$ to the rightmost box containing $i$ in $T$. The K-crystal operator $e_i^K$ acts on $T \in \svssyt^n(k\Lambda_1)$ as follows: If there is no box in $T$ containing both $i$ and $i+1$, then $e_i^K T = 0$; otherwise $e_i^K T$ is given by deleting $i+1$ from that (necessarily unique) box.
\end{dfn}

Clearly, $e_i^K T' = T$ if and only if $T' = f_i^K T$. For examples of these operators, see the K-crystal structure on $\svssyt^3(2 \Lambda_1)$ depicted in Figure~\ref{fig:Krystal_ex_onerow}. 

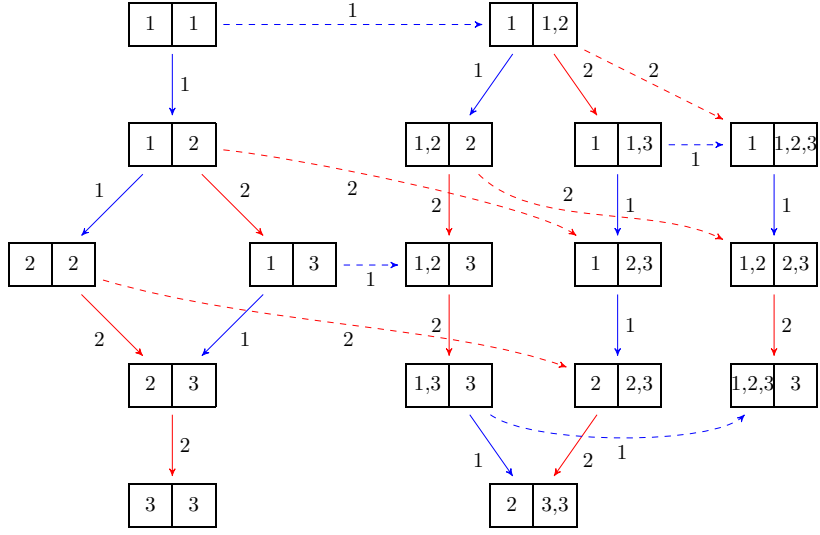
\begin{figure}[h]
\[
\ytableausetup{boxsize=2em}
\scalebox{0.8}{
\begin{tikzpicture}[xscale=2,yscale=2]
\node (hw) at (0,0) {\ytableaushort{11}};
\node (f1) at (0,-1) {\ytableaushort{12}};
\node (f11) at (-1,-2) {\ytableaushort{22}};
\node (f21) at (1,-2) {\ytableaushort{13}};
\node (f211) at (0,-3) {\ytableaushort{23}};
\node (f2211) at (0,-4) {\ytableaushort{33}};
\node (k1) at (3,0) {\ytableaushort{1{1,\!2}}};
\node (f1k1) at (2.3,-1) {\ytableaushort{{1,\!2}2}};
\node (f21k1) at (2.3,-2) {\ytableaushort{{1,\!2}3}};
\node (f221k1) at (2.3,-3) {\ytableaushort{{1,\!3}3}};
\node (f2k1) at (3.7,-1) {\ytableaushort{1{1,\!3}}};
\node (f12k1) at (3.7,-2) {\ytableaushort{1{2,\!3}}};
\node (f112k1) at (3.7,-3) {\ytableaushort{2{2,\!3}}};
\node (f1221k1) at (3,-4) {\ytableaushort{2{3,\!3}}};
\node (k21) at (5,-1) {\ytableaushort{1{1,\!2,\!3}}};
\node (f1k21) at (5,-2) {\ytableaushort{{1,\!2}{2,\!3}}};
\node (f21k21) at (5,-3) {\ytableaushort{{1,\!2,\!3}3}};
\draw[->,blue] (hw) -- node[midway,right,black] {$1$} (f1);
\draw[->,blue] (f1) -- node[midway,above left,black] {$1$} (f11);
\draw[->,red] (f1) -- node[midway,above right,black] {$2$} (f21);
\draw[->,blue] (f21) -- node[midway,below right,black] {$1$} (f211);
\draw[->,red] (f11) -- node[midway,below left,black] {$2$} (f211);
\draw[->,red] (f211) -- node[midway,right,black] {$2$} (f2211);
\draw[->,dashed,blue] (hw) -- node[midway,above,black] {$1$} (k1);
\draw[->,blue] (k1) -- node[midway,above left,black] {$1$} (f1k1);
\draw[->,blue] (f2k1) -- node[midway,right,black] {$1$} (f12k1);
\draw[->,blue] (f12k1) -- node[midway,right,black] {$1$} (f112k1);
\draw[->,red] (k1) -- node[midway,above right,black] {$2$} (f2k1);
\draw[->,red] (f1k1) -- node[midway,left,black] {$2$} (f21k1);
\draw[->,red] (f21k1) -- node[midway,left,black] {$2$} (f221k1);
\draw[->,blue] (f221k1) -- node[midway,below left,black] {$1$} (f1221k1);
\draw[->,red] (f112k1) -- node[midway,below right,black] {$2$} (f1221k1);
\draw[->,dashed,red] (k1) -- node[midway,above,black] {$2$} (k21);
\draw[->,blue] (k21) -- node[midway,right,black] {$1$} (f1k21);
\draw[->,red] (f1k21) -- node[midway,right,black] {$2$} (f21k21);
\draw[->,dashed,red] (f1) .. controls (1,-1.1) and (3,-1.5) .. node[pos=0.4,below left,black] {$2$} (f12k1);
\draw[->,dashed,red] (f1k1) .. controls (3,-1.7) and (4,-1.5) .. node[pos=0.35,above right,black] {$2$} (f1k21);
\draw[->,dashed,red] (f11) .. controls (0.7,-2.5) and (2.3,-2.5) .. node[midway,below,black] {$2$} (f112k1);
\draw[->,dashed,blue] (f21) -- node[midway,below,black] {$1$} (f21k1);
\draw[->,dashed,blue] (f2k1) -- node[midway,below,black] {$1$} (k21);
\draw[->,dashed,blue] (f221k1) .. controls (3,-3.5) and (4.5,-3.5) .. node[midway,below,black] {$1$} (f21k21);
\end{tikzpicture}
}
\]
\caption{The K-crystal $\svssyt^3(2\Lambda_1)$, where we depict K-crystal operators by dashed lines and $U_q(\fsl_n)$-crystal operators by solid lines.}\label{fig:Krystal_ex_onerow}
\end{figure}

Recall for a Coxeter group $W$ with Coxeter generators $\{s_i\}$, the \defn{length} of an element $w \in W$ is the minimal number of simple reflections $s_i$ needed to express $w$. Moreover, for a parabolic subgroup $W_J$, every coset in $W / W_J$ has a unique representative in $W$ that is of minimal length, and for a given $w \in W$, we denote by $\lfloor w \rfloor$ the corresponding minimal length coset representative.
The \defn{support} of an element $w \in W$ is the set $\Supp(w)$ such that the simple reflection $s_i$ for all $i \in \Supp(w)$ appears in some (and hence any) reduced expression for $w$. For more on Coxeter groups, we refer the reader to, \textit{e.g.},~\cite{BB05,Davis08,Humphreys90,Kane01}.

\begin{lemma}
\label{lemma:coset_reprs}
Let $W = \sym_n$. Let $J = I \setminus \{1\}$ and let $W_J = \langle s_i \mid i \in J \rangle$ be the corresponding parabolic subgroup. Each $\lfloor w \rfloor$ is of the form $s_m \cdots s_1$ for some $m$ (we consider $m = 0$ if $w$ is the identity). We have
\begin{equation}\label{eq:Lascouxcrystal}
\svssyt^n_w(k \Lambda_1) = \svssyt^n_{\lfloor w \rfloor}(k \Lambda_1) =  \{ T \in \svssyt^n(k \Lambda_1) \mid \max(T) \leq m + 1 \}.
\end{equation}
Conversely, if $m + 1 = \max \{ \max(T) \mid T \in \svssyt^n_w(k \Lambda_1) \}$, then 
\[
\lfloor w \rfloor = s_{m} \dotsm s_1.
\]
\end{lemma}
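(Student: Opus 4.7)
The plan splits into three parts: (i) identifying $\lfloor w \rfloor$ via Coxeter theory; (ii) showing the K-Demazure crystal depends only on $\lfloor w \rfloor$; and (iii) computing it explicitly by induction on $m$. For (i), I would observe that $W_J = \langle s_2, \dotsc, s_{n-1} \rangle$ is exactly the stabilizer of the first basis vector under the natural permutation action of $W = \sym_n$. The cosets of $W/W_J$ are therefore parametrized by $w(1) \in \{1, \dotsc, n\}$, and the unique minimum-length representative of the coset with $w(1) = m+1$ is the cycle $s_m s_{m-1} \cdots s_1$ (the empty product when $m = 0$).

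For (ii), I would write $w = \lfloor w \rfloor \cdot u$ with $u \in W_J$ and choose a reduced expression of $w$ whose portion processed first in the recursive construction of $\svssyt^n_w(k\Lambda_1)$ is a reduced expression for $u$ using only $s_i$ with $i \geq 2$. Since the starting set is $S_0 = \{u_{k\Lambda_1}\} = \{1^k\}$ and both $f_i^K$ and $f_i$ with $i \geq 2$ annihilate any row tableau whose entries are all $1$, these stages leave the set unchanged, so the recursion effectively reduces to computing $\svssyt^n_{\lfloor w \rfloor}(k\Lambda_1)$.

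For (iii), I would induct on $m \geq 0$. The base case $m = 0$ is immediate since $\svssyt^n_e(k\Lambda_1) = \{1^k\}$. Assume $S_{m-1} := \svssyt^n_{s_{m-1} \cdots s_1}(k\Lambda_1) = \{T : \max(T) \leq m\}$. The recursion gives
\[
\svssyt^n_{s_m \cdots s_1}(k\Lambda_1) = \{f_m^a t : t \in S_{m-1} \cup f_m^K S_{m-1}, \, a \geq 0\} \setminus \{0\},
\]
and containment in $\{T : \max(T) \leq m+1\}$ is clear since $f_m$ and $f_m^K$ only introduce $m+1$. For the reverse containment, I would exploit the following structural description of $T$ with $\max(T) \leq m+1$: reading left to right, $T$ consists of some boxes with entries strictly less than $m$, then $a$ boxes containing $m$ only, then at most one ``double box'' containing both $m$ and $m+1$, then $c$ boxes containing $m+1$ only. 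The double box is unique by semistandardness, since two consecutive double boxes would force $\max A_i = m+1 > m = \min A_{i+1}$. Given $T$, I would repeatedly apply $e_m$: each application converts the leftmost $m+1$-only box into an $m$-only box (possibly shifting the double box one step to the right). After $c$ applications, all $m+1$-only boxes are exhausted; if a double box remains, a single application of $e_m^K$ removes the $m+1$ from it. The resulting $T_0$ has $\max(T_0) \leq m$, so $T_0 \in S_{m-1}$, and reversing the chain shows $T$ equals either $f_m^c T_0$ or $f_m^c f_m^K T_0$, hence lies in the K-Demazure crystal.

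The converse in the lemma is then immediate: the maximum of $\max(T)$ over $\svssyt^n_w(k\Lambda_1) = \svssyt^n_{\lfloor w \rfloor}(k\Lambda_1)$ determines $m$ uniquely, forcing $\lfloor w \rfloor = s_m \cdots s_1$. The main obstacle will be verifying that at most one application of $f_m^K$ per chain suffices, which rests on the semistandardness-forced uniqueness of the double box in a single row; this uniqueness fails in higher shapes, where a column can accommodate several double boxes, and is why the clean argument does not extend directly to general $\lambda$.
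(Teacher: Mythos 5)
Your parts (i) and (iii) are essentially right, and (iii) is in fact the same key computation as the paper's (repeatedly apply $e_m$ to clear the boxes containing $m+1$ but not $m$, then one application of $e_m^K$ to clear the unique box containing both $m$ and $m+1$); the minor imprecisions there (boxes containing $m$ may also contain smaller entries, and two ``double boxes'' need not be adjacent, though the same chain of inequalities rules them out) are harmless. The genuine gap is in step (ii). The set $\svssyt^n_w(k\Lambda_1)$ is defined relative to a \emph{choice} of reduced expression for $w$; independence of that choice is exactly property (K.2), which in the paper is deduced \emph{from} this lemma in the proof of Theorem~\ref{thm:Krystal_rows}. So the lemma must be established for every reduced expression of $w$, whereas you verify it only for one conveniently chosen expression, namely a concatenation of words for $\lfloor w \rfloor$ and for $u \in W_J$ (where $w = \lfloor w \rfloor u$) with the $u$-part processed first. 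For a general reduced expression the letters coming from $W_J$ interleave with those building $\lfloor w \rfloor$, and once the recursion has produced tableaux other than $u_{k\Lambda_1}$, the operators $f_i, f_i^K$ with $i \geq 2$ no longer act by zero, so ``these stages leave the set unchanged'' is not available; as written, your argument would make the paper's deduction of (K.2) circular.

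What is needed is an induction along an arbitrary reduced word, say with the invariant that after processing a suffix $v$ one has $\svssyt^n_v(k\Lambda_1) = \{ T \mid \max(T) \leq v(1) \}$, and a check that the next letter $s_j$ preserves it. The cases $v(1) < j$ (no $j$ present, so the stage is trivial) and $v(1) > j+1$ (the set is already closed under $f_j$ and $f_j^K$) are immediate, the case $v(1) = j$ is precisely your computation from (iii), and the dangerous case $v(1) = j+1$ --- the only one in which the stage could push the maximum beyond $w(1)$, which would be fatal since the recursive construction never removes elements --- cannot occur in a reduced word: $v(1) = j+1$ means $v^{-1}(j+1) = 1 < v^{-1}(j)$, hence $\ell(s_j v) < \ell(v)$, contradicting reducedness. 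This use of reducedness is the missing ingredient, and it is what the paper's own induction (peeling off the last occurrence of $s_{m+1}$ in the given word and inducting on $m$ and on length) is accomplishing. With this added, your argument closes, and the converse statement then follows exactly as you say.
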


\begin{proof}
For $w \in W$, it is easy to see from the Coxeter relations on $S_n$ that $\lfloor w \rfloor$ may be written uniquely as $s_m \cdots s_1$. 

We now prove Equation~\eqref{eq:Lascouxcrystal} by induction on $m$. The base case of $m = 0$ is trivial. Assume that the claim holds for $m$ and consider $w$ with $\lfloor w \rfloor = s_{m+1} \dotsm s_1$.  Let $w = w' s_{m+1} w''$ such that $m+1 \notin \Supp(w'')$.
By the inductive hypothesis, we have then
\[
\svssyt^n_w(k \Lambda_1) = \svssyt^n_{w' s_{m+1} w''}(k \Lambda_1)  =  \svssyt^n_{w' s_{m+1} s_j \cdots s_1}(k \Lambda_1)
\]
 for some $j \leq m$. If $j < m$, then
\[
\svssyt^n_{w' s_{m+1} s_j \cdots s_1}(k \Lambda_1) = \svssyt^n_{w' s_j \cdots s_1 s_{m+1}}(k \Lambda_1) = \svssyt^n_{w' s_j \cdots s_1}(k \Lambda_1),
\]
since $\max\{ \max(T) \mid T \in \svssyt^n_{s_j \cdots s_1}(k\Lambda_1) \} < m+1$, and we can proceed by downward induction on length of $w$.
Thus, we may assume $j=m$, so that we have 
\[
\svssyt^n_w(k \Lambda_1) =  \svssyt^n_{w' s_{m+1} s_m \cdots s_1}(k \Lambda_1).
\]
Note that $\lfloor w \rfloor = \lfloor w' s_{m+1} s_m \cdots s_1 \rfloor$.
From the definition of the crystal operators and K-Demazure crystal, it is sufficient to show Equation~\eqref{eq:Lascouxcrystal} for $s_{m+1} s_m \cdots s_1$.
Note that, for any $T \in \svssyt^n(k\Lambda_1)$, we can repeatedly apply $e_{m+1}$ until we remove all $(m+2)$'s except possibly in the rightmost box, in which case that box also contains an $m+1$, and so the final $m+2$ is removed by the application of $e_{m+1}^K$. Thus, by the inductive hypothesis, we have
\[
\svssyt^n_{s_{m+1} s_m \cdots s_1}(k \Lambda_1) = \{ T \in \svssyt^n(k \Lambda_1) \mid \max(T) \leq m + 2 \}.
\]

For the converse, the definition of the (K-)crystal operators implies $w$ must have support $\{1, \dotsc, m\}$. From the construction of the minimal length coset representatives in $W / W_J$, the claim follows.
\end{proof}

\begin{thm}
\label{thm:Krystal_rows}
The $U_q(\fsl_n)$-crystal $\svssyt^n(k \Lambda_1)$ is a K-crystal.
\end{thm}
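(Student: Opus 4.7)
The proof proceeds by verifying the three defining properties (K.1)--(K.3) of a K-crystal, with Lemma~\ref{lemma:coset_reprs} doing most of the heavy lifting.

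For (K.1), I would take $w = w_0 \in \sym_n$, for which $\lfloor w_0 \rfloor = s_{n-1} \cdots s_1$; Lemma~\ref{lemma:coset_reprs} then yields $\svssyt^n_{w_0}(k\Lambda_1) = \svssyt^n(k\Lambda_1)$, giving connectedness from $u_\lambda$ under the combined recursive construction. For uniqueness of the minimal highest weight element: any $T$ with $e_i T = 0$ for all $i$ must equal one of the tableaux $U_\mu$ appearing in Proposition~\ref{prop:single_row}, whose rightmost box is $\{1, 2, \ldots, k\}$. When $k \geq 2$ this box contains both $1$ and $2$, so $e_1^K T \neq 0$; thus $k = 1$ and $T = u_\lambda$.

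For (K.2), Lemma~\ref{lemma:coset_reprs} identifies $\svssyt^n_w(k\Lambda_1)$ with $\{T \in \svssyt^n(k\Lambda_1) \mid \max(T) \leq m+1\}$, where $\lfloor w \rfloor = s_m \cdots s_1$. This set depends only on $w$ (via $\lfloor w \rfloor$), not on any reduced expression; the braid-move induction inside the proof of the lemma establishes that the recursive construction yields this set for every reduced expression of $w$.

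For (K.3), write $\lfloor w \rfloor = s_m \cdots s_1$, so $w\lambda$ is the weak composition with a single entry $k$ in position $m+1$ and
\[
L_{w\lambda}(\xx) = \varpi_m \varpi_{m-1} \cdots \varpi_1 \, x_1^k.
\]
Set $S_{m-1} := \svssyt^n_{s_{m-1} \cdots s_1}(k\Lambda_1)$ and $S_m := \svssyt^n_{s_m s_{m-1} \cdots s_1}(k\Lambda_1)$. I induct on $m$; the base case $m = 0$ is immediate since $S_0 = \{u_\lambda\}$ has character $x_1^k$. For the inductive step, I would prove
\[
\ch(S_m) = \varpi_m \, \ch(S_{m-1}).
\]
By the recursive construction, $S_m$ is the union of all $f_m$-strings through $S_{m-1}$ and all $f_m$-strings through $f_m^K S_{m-1}$. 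The first family is the classical Demazure extension, contributing $\pi_m \ch(S_{m-1})$. For the second, each $f_m^K T$ has weight $\wt(T) + \epsilon_{m+1}$ and excess one greater than $T$, and $f_m^K T \neq 0$ exactly when $T$ contains $m$ (since $T \in S_{m-1}$ has no $m+1$). Since $\ch(S_{m-1})$ only involves $x_1, \ldots, x_m$, a direct calculation yields $\pi_m(x_{m+1} \xx^\alpha) = 0$ unless $\alpha_m > 0$; hence $\beta \pi_m(x_{m+1} \ch(S_{m-1}))$ captures precisely the $f_m$-strings through $f_m^K S_{m-1}$. Summing gives $\varpi_m \ch(S_{m-1})$ as required.

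The main obstacle is the careful accounting in (K.3): verifying disjointness of the two families, showing that the $f_m$-strings through $f_m^K S_{m-1}$ exhaust all tableaux in $S_m$ containing a $\{m, m+1\}$-box, and tracking the excess correctly. Fortunately, semistandardness constrains each row to contain at most one $\{m, m+1\}$-box (since $\max\{m, m+1\} = m+1$ forces the next box to have minimum at least $m+1$, so itself equal to $\{m+1\}$ when entries are bounded by $m+1$), making this bookkeeping tractable and driving the induction cleanly through.
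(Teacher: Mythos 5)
Your proposal is correct, and for (K.1) and (K.2) it follows essentially the paper's route: everything reduces to Lemma~\ref{lemma:coset_reprs}, which identifies $\svssyt^n_w(k\Lambda_1)$ with $\{T \mid \max(T)\le m+1\}$ for $\lfloor w\rfloor=s_m\cdots s_1$ (your explicit uniqueness check for the minimal highest weight element via the tableaux $U_\mu$ of Proposition~\ref{prop:single_row} is a detail the paper leaves implicit; just mind the clash between the theorem's $k$, the row length, and the $k$ indexing the last box of $U_\mu$, which should be a varying parameter $j$). Where you genuinely diverge is (K.3). The paper gets it almost for free: by the lemma the K-Demazure crystal is the full set of single-row set-valued tableaux in the alphabet $\{1,\dotsc,m+1\}$, whose character is $\G_{(k)}(x_1,\dotsc,x_{m+1};\beta)$; combined with $\G_\lambda(\xx;\beta)=\varpi_{w_0}\xx^{\lambda}$ and the coset identity $\varpi_w\xx^{\lambda}=\varpi_{\lfloor w\rfloor}\xx^{\lambda}$, this is exactly $L_{w\lambda}$. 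You instead verify the recursion $\ch(S_m)=\varpi_m\,\ch(S_{m-1})$ directly, splitting $S_m$ into the $f_m$-strings through $S_{m-1}$ (contributing $\pi_m\ch(S_{m-1})$) and through $f_m^K S_{m-1}$ (contributing $\beta\pi_m(x_{m+1}\ch(S_{m-1}))$). That is a legitimately different mechanism, closer in spirit to the $i$-K-string analysis the paper uses for the single-column case (Proposition~\ref{prop:demazure_strings}); it buys a crystal-level explanation of the identity $\varpi_i=\pi_i+\beta\pi_i x_{i+1}$ at the cost of the bookkeeping you flag, and it relies on the recursive $f$-description of $\svssyt^n_w(\lambda)$, which the paper asserts as equivalent to the $e$-definition. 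The bookkeeping does close, but your parenthetical (at most one $\{m,m+1\}$-box per row) is only half of it: the real point is that the special case of the $f_m$ rule (stealing $m$ from the right neighbor) triggers only when that neighbor already contains both $m$ and $m+1$, so $f_m$ never creates such a box starting from a tableau without one and merely slides an existing one leftward; hence the first family is exactly the part of $S_m$ with no $\{m,m+1\}$-box and the second is the part with one, with string tops $S_{m-1}$ and $f_m^K S_{m-1}$ respectively (each element of $f_m^K S_{m-1}$ has $\varepsilon_m=0$), and the weight and excess accounting works because $f_m$ preserves excess while $f_m^K$ raises it by exactly one.
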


\begin{proof}
(K.1) is immediately from Lemma~\ref{lemma:coset_reprs}.
(K.2) follows from Lemma~\ref{lemma:coset_reprs}, as the lemma implies $\svssyt^n_w(k \Lambda_1)$ only depends on the minimal length coset representative, which is independent of reduced expression of $w$.
From the properties of the Demazure--Lascoux operators, we have $\varpi_w \xx^{\lambda} = \varpi_{\lfloor w \rfloor} \xx^{\lambda}$, where $W_J$ is the parabolic subgroup corresponding to the stabilizer of $\lambda$.
Hence, (K.3) follows from Lemma~\ref{lemma:coset_reprs}.
\end{proof}

\begin{remark}
\label{rem:type_2_operators}
From Remark~\ref{rem:crystal_excitations}, we see that the K-crystal operators from Definition~\ref{def:K_crystal_ops} correspond to Type~$2$ elementary excitations/emissions on the corresponding excited Young diagram. Additionally, as for the usual crystal operators, the action of $f_i^K$ only corresponds to a subset of all possible Type~$2$ elementary excitations. Moreover, we see that~\cite[Lemma~4.17]{GK15} does not hold when restricted to the (K-)crystal operators by considering the element
\[
\ytableausetup{boxsize=2em}
\raisebox{-8pt}{$
\ytableaushort{{1,\!2}2}
$}
\longleftrightarrow
\begin{tikzpicture}[xscale=0.5,yscale=-0.5,baseline=-17]
\fill[blue!30] (0,0) rectangle (1,1);
\fill[blue!30] (1,1) rectangle (3,2);
\draw[step=1] (0,0) grid (3,2);
\end{tikzpicture}\,,
\]
which either requires a Type~$1'$ or~$2$ elementary emission to recover the ground state.
\end{remark}

\subsection{Single column K-crystals}

Next, we show that $\svssyt^n(\Lambda_k)$ is a K-crystal, when equipped with the K-crystal operators of Definition~\ref{def:K_crystal_ops}.

It is clear that these K-crystal operators on $\svssyt^n(\Lambda_k)$ always yield either valid semistandard set-valued tableaux or $0$.
In order to show that these operators form a K-crystal structure, we first note that straightforwardly one has $e_i^K T' = T$ if and only if $T' = f_i^K T$. Also note that (as ordinary crystals)
\begin{equation}
\label{eq:column_isomorphism}
\svssyt^n(\Lambda_k) \iso \bigoplus_{m=k}^{n-1} B(\Lambda_m)^{\oplus \binom{m}{k}}
\end{equation}
since the sizes of the sets within a column cannot change under applying the ordinary $U_q(\fsl_n)$-crystal operators. As an example, the K-crystal structure on $\svssyt^4(\Lambda_2)$ is shown in Figure~\ref{fig:Krystal_ex_La2}.

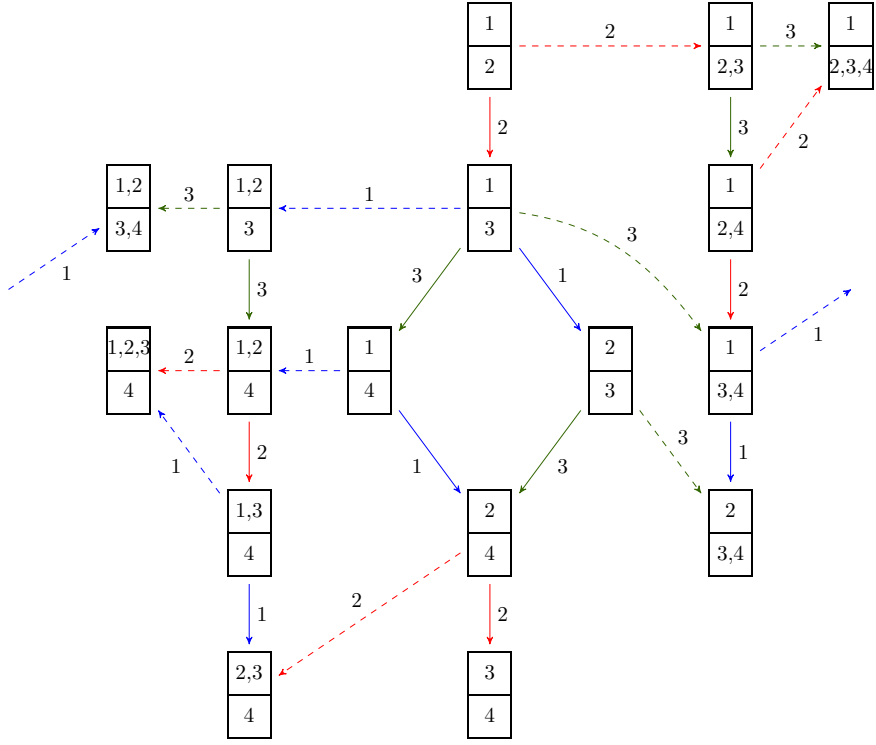
\begin{figure}[h]
\[
\ytableausetup{boxsize=2em}
\scalebox{0.8}{
\begin{tikzpicture}[xscale=2,yscale=2.7]
\node (hw) at (0,0) {\ytableaushort{1,2}};
\node (f2) at (0,-1) {\ytableaushort{1,3}};
\node (f32) at (-1,-2) {\ytableaushort{1,4}};
\node (f12) at (1,-2) {\ytableaushort{2,3}};
\node (f132) at (0,-3) {\ytableaushort{2,4}};
\node (f2132) at (0,-4) {\ytableaushort{3,4}};
\node (k2) at (2,0) {\ytableaushort{1,{2,\!3}}};
\node (f3k2) at (2,-1) {\ytableaushort{1,{2,\!4}}};
\node (f23k2) at (2,-2) {\ytableaushort{1,{3,\!4}}};
\node (f123k2) at (2,-3) {\ytableaushort{2,{3,\!4}}};
\node (k1f2) at (-2,-1) {\ytableaushort{{1,\!2},3}};
\node (f3k1f2) at (-2,-2) {\ytableaushort{{1,\!2},4}};
\node (f23k1f2) at (-2,-3) {\ytableaushort{{1,\!3},4}};
\node (f123k1f2) at (-2,-4) {\ytableaushort{{2,\!3},4}};
\node (k31f2) at (-3,-1) {\ytableaushort{{1,\!2},{3,\!4}}};
\node (k32) at (3,0) {\ytableaushort{1,{2,\!3,\!4}}};
\node (k21f32) at (-3,-2) {\ytableaushort{{1,\!2,\!3},4}};
\draw[->,red] (hw) -- node[midway,right,black] {$2$} (f2);
\draw[->,dgreencolor] (f2) -- node[midway,above left,black] {$3$} (f32);
\draw[->,blue] (f2) -- node[midway,above right,black] {$1$} (f12);
\draw[->,blue] (f32) -- node[midway,below left,black] {$1$} (f132);
\draw[->,dgreencolor] (f12) -- node[midway,below right,black] {$3$} (f132);
\draw[->,red] (f132) -- node[midway,right,black] {$2$} (f2132);
\draw[->,dashed,red] (hw) -- node[midway,above,black] {$2$} (k2);
\draw[->,dgreencolor] (k2) -- node[midway,right,black] {$3$} (f3k2);
\draw[->,red] (f3k2) -- node[midway,right,black] {$2$} (f23k2);
\draw[->,blue] (f23k2) -- node[midway,right,black] {$1$} (f123k2);
\draw[->,dashed,blue] (f2) -- node[midway,above,black] {$1$} (k1f2);
\draw[->,dgreencolor] (k1f2) -- node[midway,right,black] {$3$} (f3k1f2);
\draw[->,red] (f3k1f2) -- node[midway,right,black] {$2$} (f23k1f2);
\draw[->,blue] (f23k1f2) -- node[midway,right,black] {$1$} (f123k1f2);
\draw[->,dashed,dgreencolor] (f2) .. controls (0.9,-1.1) and (1.3,-1.3) .. node[midway,above right,black] {$3$} (f23k2);
\draw[->,dashed,dgreencolor] (k2) -- node[midway,above,black] {$3$} (k32);
\draw[->,dashed,blue] (f32) -- node[midway,above,black] {$1$} (f3k1f2);
\draw[->,dashed,red] (f3k1f2) -- node[midway,above,black] {$2$} (k21f32);
\draw[->,dashed,dgreencolor] (f12) -- node[midway,above right,black] {$3$} (f123k2);
\draw[->,dashed,red] (f132) -- node[midway,above left,black] {$2$} (f123k1f2);
\draw[->,dashed,blue] (f23k1f2) -- node[midway,below left,black] {$1$} (k21f32);
\draw[->,dashed,red] (f3k2) -- node[midway,below right,black] {$2$} (k32);
\draw[->,dashed,dgreencolor] (k1f2) -- node[midway,above,black] {$3$} (k31f2);
\draw[->,dashed,blue] (f23k2) -- node[midway,below right,black] {$1$} (3,-1.5);
\draw[->,dashed,blue] (-4,-1.5) -- node[midway,below right,black] {$1$} (k31f2);
\end{tikzpicture}
}
\]
\caption{The K-crystal for $\svssyt^4(\Lambda_2)$, depicted with the same conventions as in Figure~\ref{fig:Krystal_ex_onerow}.}
\label{fig:Krystal_ex_La2}
\end{figure}

\begin{lemma}
\label{lemma:demazure_redexp}
The K-Demazure crystal $\svssyt_w^n(\Lambda_k)$ does not depend on the choice of reduced expression for $w$.
\end{lemma}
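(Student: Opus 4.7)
By Matsumoto's theorem, any two reduced expressions for $w$ differ by a sequence of commutation moves $s_a s_b \leftrightarrow s_b s_a$ (when $|a-b|>1$) and braid moves $s_a s_{a+1} s_a \leftrightarrow s_{a+1} s_a s_{a+1}$. Since the K-Demazure crystal is built from the recursive rule $S_j = \bigcup_{m \geq 0} f_{i_j}^m(S_{j-1} \cup f_{i_j}^K S_{j-1})$, it suffices to verify that $\svssyt^n_w(\Lambda_k)$ is preserved under each elementary move applied internally to any reduced expression.

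For commutation moves the plan is clean: when $|a-b|>1$, the operators $\{f_a, e_a, f_a^K, e_a^K\}$ touch only letters in $\{a, a+1\}$ and the operators $\{f_b, e_b, f_b^K, e_b^K\}$ touch only letters in $\{b, b+1\}$. Since these alphabets are disjoint, the two families pairwise commute on $\svssyt^n(\Lambda_k)$, and the $a$-signature of a tableau is unchanged by $b$-operators. Consequently the two composite recursive steps corresponding to $s_a s_b$ and to $s_b s_a$ produce the same resulting set.

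The braid case is the main obstacle. My approach is to localize to the subcrystal structure generated by $\{f_a, f_{a+1}, e_a, e_{a+1}, f_a^K, f_{a+1}^K, e_a^K, e_{a+1}^K\}$. Each connected subcomponent is, by Equation~\eqref{eq:column_isomorphism}, a finite union of $\fsl_3$-crystals joined by a small number of K-arrows (as illustrated in Figure~\ref{fig:Krystal_ex_La2}); property~(H.1) prevents K-arrows from chaining, so each subcomponent remains small and finite. Given an arbitrary input set $X$ (playing the role of $D_u$ for the prefix $u$ of the reduced expression), I would verify by finite case analysis on each subcomponent that the K-Demazure closure generated by $s_a s_{a+1} s_a$ coincides with that generated by $s_{a+1} s_a s_{a+1}$. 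An outer induction on the length of the overall reduced expression then completes the proof.

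An alternative route that might bypass the explicit case analysis would be to give a reduced-expression-independent characterization of $\svssyt^n_w(\Lambda_k)$, generalizing Lemma~\ref{lemma:coset_reprs}: one might hope to combine a Bruhat-type condition on the underlying subset $S_T$ of a tableau with a compatibility condition on its partition into $k$ blocks. However, small explicit examples already show the K-Demazure crystal visits only certain $B(\Lambda_m)$-components (not every partition of every underlying subset is realized), so formulating the precise compatibility condition appears to be the key subtlety in any such direct description.
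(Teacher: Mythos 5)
Your overall strategy coincides with the paper's: reduce via Matsumoto's theorem to single commutation and braid moves, dispose of the commutation case by noting that $\{f_a, f_a^K\}$ and $\{f_b, f_b^K\}$ act on disjoint alphabets when $\lvert a-b\rvert>1$ and hence commute, and observe that it suffices to compare the two local closures applied to the set already produced by the common prefix (which, since the closure is formed elementwise, reduces to single starting tableaux). All of that is sound and is exactly how the paper frames the proof.

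The gap is that the braid case, which is the entire content of the lemma, is never actually carried out: you state that you ``would verify by finite case analysis on each subcomponent,'' but no case analysis is performed and no structural facts are isolated that would make it succeed, so the hard step is asserted rather than proved. The paper closes this step by exploiting the special features of a single column, where every letter occurs at most once: for $T \in \svssyt^n(\Lambda_k)$ one has $f_i^K T = 0$ if and only if $f_i T = 0$, and $f_i f_i^K T = f_i^K f_i T = 0$; from these one deduces the exchange relation $f_{i+1}^K f_i = f_i f_{i+1} f_i^K$, and a direct check shows that every other mixed composition of operators from $\{f_i, f_{i+1}, f_i^K, f_{i+1}^K\}$ vanishes, which forces the closures attached to $s_i s_{i+1} s_i$ and $s_{i+1} s_i s_{i+1}$ to coincide. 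Either these identities or an actually completed enumeration of the finitely many local configurations of the letters $a, a+1, a+2$ within a column is required; without one of them your argument does not establish the lemma. Two smaller caveats: your appeal to (H.1) is legitimate only because $f_i^K f_i^K = 0$ is immediate from Definition~\ref{def:K_crystal_ops} on columns ((H.1) is a heuristic in the paper, not an established axiom), and your alternative route via a reduced-expression-free description generalizing Lemma~\ref{lemma:coset_reprs} is, as you acknowledge, not formulated, so it cannot substitute for the missing computation.
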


\begin{proof}
By Matsumoto's theorem~\cite{Matsumoto64}, it is sufficient to show the set $\svssyt_w^n(\Lambda_k)$ does not change when applying a braid relation $s_i s_{i+1} s_i = s_{i+1} s_i s_{i+1}$ or $s_i s_j = s_j s_i$ for $\lvert i - j \rvert > 1$.

It is clear that $f_i^K f_j = f_j f_i^K$ and $f_i^K f_j^K = f_j^K f_i^K$ for all $\lvert i - j \rvert > 1$. So the claim follows when the reduced expressions differ by $s_i s_j = s_j s_i$.

Next we consider the braid relation $s_i s_{i+1} s_i = s_{i+1} s_i s_{i+1}$.
Note that, by definition of $f_i^K$, we have $f_i^K T = 0$ if and only if $f_i T = 0$. Furthermore, we have $f_i f_i^K T = f_i^K f_i T = 0$ for all $T \in \svssyt^n(\lambda)$.
Therefore, it is straightforward to see that we have $f_{i+1}^K f_i = f_i f_{i+1} f_i^K$.
A direct computation in all cases shows that application of all other compositions of operators in $\{ f_i, f_{i+1}, f_i^K, f_{i+1}^K \}$ results in $0$. Hence, the lemma follows.
\end{proof}

Next, we need the following K-analog of~\cite[Prop.~3.3.4]{K93}. Define an \defn{$i$-K-string} to be the subcrystal of the form
\[
\begin{tikzpicture}[xscale=2,yscale=-1.5]
\node (top) at (0,0)  {$b$};
\node (f) at (1,0)  {$\bullet$};
\node (ff) at (2,0)  {$\bullet$};
\node (dots) at (3,0) {$\cdots$};
\node (fend) at (4,0)  {$\bullet$};
\node (ffend) at (5,0)  {$\bullet$};
\node (K) at (0,1)  {$\bullet$};
\node (fK) at (1,1)  {$\bullet$};
\node (ffK) at (2,1)  {$\bullet$};
\node (dotsK) at (3,1) {$\cdots$};
\node (fendK) at (4,1)  {$\bullet$};
\draw[->,blue] (top) -- node[midway,above,black] {$i$} (f);
\draw[->,blue] (f) -- node[midway,above,black] {$i$} (ff);
\draw[->,blue] (ff) -- node[midway,above,black] {$i$} (dots);
\draw[->,blue] (dots) -- node[midway,above,black] {$i$} (fend);
\draw[->,blue] (fend) -- node[midway,above,black] {$i$} (ffend);
\draw[->,blue] (K) -- node[midway,above,black] {$i$} (fK);
\draw[->,blue] (fK) -- node[midway,above,black] {$i$} (ffK);
\draw[->,blue] (ffK) -- node[midway,above,black] {$i$} (dotsK);
\draw[->,blue] (dotsK) -- node[midway,above,black] {$i$} (fendK);
\draw[->,blue,dashed] (top) -- node[midway,left,black] {$i$} (K);
\end{tikzpicture}
\]
where the $i$-strings are as long as possible and the dashed arrow represents the $f_i^K$ action. We say the $i$-K-string has \defn{length} $\ell$ if $\varphi_i(b) = \ell$. (This is equivalent to saying that the $i$-string starting at $b$ has length $\ell$.) Note that $\varphi_i(f_i^K b) = \ell - 1$. This is our proposed K-theory analog of an $i$-string or the restriction to $\fsl_2$ (equivalently, $\svssyt^2(\ell \Lambda_1)$) corresponding to $\{i\}$ based on the description of the Demazure--Lascoux operator as $\varpi_i f = \pi_i f + \beta \pi_i (x_{i+1} \cdot f)$.

It is clear that $\svssyt^n(\lambda)$ (for $\lambda = \Lambda_k, k\Lambda_1$) decomposes into a direct sum of $i$-K-strings. Note that this is branching down to type $\fsl_2$, where this clearly yields a K-crystal structure for any $i$.

\begin{prop}
\label{prop:demazure_strings}
Let $S$ be an $i$-K-string of $\svssyt^n(\Lambda_k)$ and let $b$ be the highest weight element of $S$. Then, the set $\svssyt^n_w(\Lambda_k) \cap S$ is either empty, $S$, or $\{b\}$.
\end{prop}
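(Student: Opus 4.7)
The plan is to induct on $\ell(w)$, paralleling Kashiwara's approach to classical Demazure crystals. The first step is a structural reduction: since each positive integer appears in at most one box of a column tableau, every $i$-K-string in $\svssyt^n(\Lambda_k)$ has at most three elements. Indeed, for a top $b$ (with $e_i b = 0$), either $b$ contains neither $i$ nor both of $i,i+1$ and $S = \{b\}$ is a singleton (so the claim is trivial), or $b$ contains $i$ but not $i+1$ and $S = \{b, f_i b, f_i^K b\}$. I will focus on the latter case and prove that $\svssyt^n_w(\Lambda_k) \cap S$ is $\emptyset$, $\{b\}$, or $S$ by induction on $\ell(w)$.

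The base case $w = e$ gives $\svssyt^n_e(\Lambda_k) = \{u_{\Lambda_k}\}$, which is the top of its own $i$-K-string for every $i$ since each box of $u_{\Lambda_k}$ is a singleton. For the inductive step, I would write $w = w' s_j$ with $\ell(w) = \ell(w')+1$; by Lemma~\ref{lemma:demazure_redexp} the set $\svssyt^n_w(\Lambda_k)$ equals the $f_j$-string closure of $\svssyt^n_{w'}(\Lambda_k) \cup f_j^K(\svssyt^n_{w'}(\Lambda_k))$.

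When $i = j$, both $f_i$ and $f_i^K$ preserve $S$ and the $\{f_i, f_i^K\}$-closure of $\{b\}$ equals $S$, so the induction hypothesis $\svssyt^n_{w'}(\Lambda_k) \cap S \in \{\emptyset, \{b\}, S\}$ forces $\svssyt^n_w(\Lambda_k) \cap S \in \{\emptyset, S\}$. When $|i - j| \geq 2$, $f_j$ and $f_j^K$ act on labels disjoint from $\{i, i+1\}$ within a single column, hence commute with $f_i$ and $f_i^K$; each therefore sends an $i$-K-string either to $0$ or bijectively onto another one, matching tops to tops and K-positions to K-positions. Each element of $\svssyt^n_w(\Lambda_k) \cap S$ is then $f_j^a c$ or $f_j^a f_j^K c$ for some $c$ in a uniquely determined preimage $i$-K-string $\tilde S$, and by the induction hypothesis the contribution coming from $\svssyt^n_{w'}(\Lambda_k) \cap \tilde S$ is $\emptyset$, $\{b\}$, or $S$; their union remains in $\{\emptyset, \{b\}, S\}$.

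The main obstacle will be the braid case $|i - j| = 1$; say $j = i+1$, the other case being symmetric. Since $b$ contains $i$ but not $i+1$, a direct check shows $f_j b = f_j^K b = 0$, so no element of $S$ arises directly from $b$ by $f_j$ or $f_j^K$. Any newly appearing element must therefore come from some $c \in \svssyt^n_{w'}(\Lambda_k)$ lying in a different $i$-K-string $\tilde S$. Since $\varphi_j \leq 1$ on $\svssyt^n(\Lambda_k)$, only $a \in \{0, 1\}$ is possible, so I would run a finite case analysis on the joint configurations of $i, i+1, i+2$ in the relevant column. Combining this analysis with the induction hypothesis on $\tilde S$, together with the braid relation $s_i s_{i+1} s_i = s_{i+1} s_i s_{i+1}$ and Lemma~\ref{lemma:demazure_redexp} to re-express the reduced word when convenient, should show that every contribution intersects $S$ in one of the three allowed shapes. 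Assembling these cases completes the induction.
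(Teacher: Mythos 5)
Your skeleton is the same as the paper's: induction on $\ell(w)$, the reduction that every $i$-string in $\svssyt^n(\Lambda_k)$ has length at most one so an $i$-K-string is $\{b, f_i b, f_i^K b\}$ or a singleton, and the case split $j=i$, $\lvert i-j\rvert\ge 2$, $\lvert i-j\rvert=1$; the first two cases you treat essentially as the paper does (the commuting case via the fact that $f_j,f_j^K$ carry $i$-K-strings to $i$-K-strings position-by-position, which is Figure~\ref{fig:local_structure_commute} in the paper). The genuine gap is that the braid case $\lvert i-j\rvert=1$, which is the entire substance of the proposition, is left as ``a finite case analysis \dots should show''. Writing $T:=f_i b$ and $\widetilde{T}:=f_i^K b$, what actually has to be checked (and what the paper checks) is: (i) $T=f_jT'$ cannot occur, because no single-column set-valued tableau has $\varepsilon_i=\varepsilon_{i\pm 1}=1$; (ii) when $\widetilde{T}=f_j^K T'$ or $T=f_j^K T'$ with $T'$ in the old K-Demazure crystal, one must exhibit, from the local configuration of $i-1,i,i+1$ (resp.\ $i,i+1,i+2$) in the relevant box, auxiliary elements related to $T'$ by $e_i$, $e_j$, $e_j^K$ whose membership is forced by the induction hypothesis and which then sweep $b$, $T$, $\widetilde{T}$ all into the new K-Demazure crystal; and (iii) the case $\widetilde{T}=f_{i-1}T'$ must be ruled out by contradiction: the local structure produces an element $V$ with $f_{i-1}V=b$ and $f_{i-1}^K V$ adjacent to the string of $T'$, the induction hypothesis applied to \emph{those other} strings forces $V$ (and its neighbors) into $\svssyt^n_w(\Lambda_k)$, and this in turn forces $\widetilde{T}\in\svssyt^n_w(\Lambda_k)$, contrary to assumption. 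None of these verifications appears in your write-up, and they are not automatic consequences of ``the induction hypothesis on $\tilde S$'': (iii) in particular uses the hypothesis on strings other than the preimage string of the newly added element, in a way one has to find.

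Two further cautions. Your appeal to ``the braid relation $s_is_{i+1}s_i=s_{i+1}s_is_{i+1}$ and Lemma~\ref{lemma:demazure_redexp} to re-express the reduced word when convenient'' is not a usable step here: Lemma~\ref{lemma:demazure_redexp} only gives reduced-word independence of the set $\svssyt^n_w(\Lambda_k)$, it does not convert the $f_j$-closure description into information about a fixed $i$-K-string, and the paper's argument never needs to rewrite the word inside the induction. Also, the observation that $f_jb=f_j^Kb=0$ for $\lvert i-j\rvert=1$ is true but beside the point: the danger is not elements produced \emph{from} $b$, but $T$ or $\widetilde{T}$ being reached by $f_j$ or $f_j^K$ from outside $S$ while $b$ is absent, producing a forbidden intersection such as $\{T\}$ or $\{b,T\}$; excluding exactly this is what items (i)--(iii) accomplish. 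As it stands, your proposal is a correct plan whose decisive case is unproved.
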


\begin{proof}
We proceed by induction on the Coxeter length of $w$. The base case with $w$ being the identity is trivial. Hence, we inductively assume that the claim holds for all $w$ up through length $\ell$, and let $j$ be such that $\ell(s_j w) > \ell(w)$. We must show that the claim holds for $s_j w$. Note that all $i$-strings (and hence $i$-K-strings) have length at most $1$.

From the definition of the (K-)crystal operators and the K-Demazure crystal, if $i = j$, then the claim clearly holds for $s_j w$.
Thus, we assume $i \neq j$, and it is sufficient to show the following.
Fix some $i$-K-string $S$ with highest weight element $b$, and let $T := f_i b$ and $\widetilde{T} := f_i^K b$ be such that $T, \widetilde{T} \notin \svssyt^n_w(\Lambda_k)$.
If one of the following equations hold
\[
T = f_j T',
\qquad
T = f_j^K T',
\qquad
\widetilde{T} = f_j T',
\qquad
\widetilde{T} = f_j^K T',
\]
for some $T' \in \svssyt^n_w(\Lambda_k)$, then $b, T, \widetilde{T} \in \svssyt^n_{s_j w}(\Lambda_k)$.

Suppose $\lvert i - j \rvert > 1$. Consider the case where $T = f_j T'$. Since $f_i f_j = f_j f_i$, we have
\[
b' := e_i T' \neq 0,
\qquad
\widetilde{T}' := f_i^K b' \neq 0,
\qquad
f_j b' = b,
\qquad
f_j \widetilde{T}' = \widetilde{T}.
\]
The local K-crystal structure is illustrated in Figure~\ref{fig:local_structure_commute}.
By the induction assumption, we have $b', \widetilde{T}' \in \svssyt^n_w(\Lambda_k)$. Therefore, we have $b, \widetilde{T} \in \svssyt^n_{s_j w}(\Lambda_k)$ as claimed. The other three cases are similar.

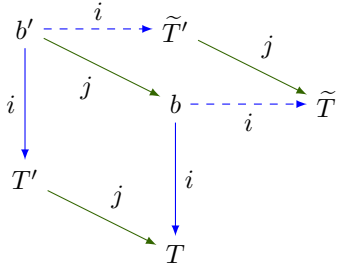
\begin{figure}
\[
\begin{tikzpicture}[>=latex,scale=2]
\node (b) at (0,0) {$b$};
\node (T) at (0,-1) {$T$};
\node (tT) at (1,0) {$\widetilde{T}$};
\node (bp) at (-1,0.5) {$b'$};
\node (Tp) at (-1,-0.5) {$T'$};
\node (tTp) at (0,0.5) {$\widetilde{T}'$};
\draw[->,blue] (b) -- node[midway,right,black] {$i$} (T);
\draw[->,dashed,blue] (b) -- node[midway,below,black] {$i$} (tT);
\draw[->,blue] (bp) -- node[midway,left,black] {$i$} (Tp);
\draw[->,dashed,blue] (bp) -- node[midway,above,black] {$i$} (tTp);
\draw[->,dgreencolor] (bp) -- node[midway,below left,black] {$j$} (b);
\draw[->,dgreencolor] (Tp) -- node[midway,above right,black] {$j$} (T);
\draw[->,dgreencolor] (tTp) -- node[midway,above right,black] {$j$} (tT);
\end{tikzpicture}
\]
\caption{The local K-crystal structure near $T$ in the the proof of Proposition~\ref{prop:demazure_strings} in one of the cases when $\lvert i - j \rvert > 1$.}
\label{fig:local_structure_commute}
\end{figure}

Now, suppose $|i - j| = 1$.
Note that there does not exist a $T \in \svssyt^n(\Lambda_k)$ such that $\varepsilon_i(T) = \varepsilon_{i \pm 1}(T) = 1$. Hence, the case $T = f_j T'$ is impossible when $\lvert i - j \rvert = 1$.

Hence, consider when $\widetilde{T} = f_j^K T'$, then we have $f_j T' = b$.
If $j = i - 1$, then $\widetilde{T}$ contains a box with $i-1, i, i + 1$. Thus we have $b = e_i T' \in \svssyt^n_{s_j w}(\Lambda_k)$ by the induction assumption; so the claim holds in this case.
If $j = i + 1$, then $\widetilde{T}$ has a box with $i, i+1, i+2$. A straightforward computation yields the following local picture around $\widetilde{T}$:
\[
\begin{tikzpicture}[>=latex,scale=2]
\node (T) at (1,0.5) {$T$};
\node (b) at (0,0.5) {$b$};
\node (tT) at (-1,0) {$\widetilde{T}$};
\node (tTp) at (0,-0.5) {$T'$};
\node (bp) at (1,-0.5) {$b'$};
\node (Tp) at (2,0) {$U$};
\draw[->,blue] (b) -- node[midway,above,black] {$i$} (T);
\draw[->,dashed,blue] (b) -- node[midway,above left,black] {$i$} (tT);
\draw[->,dashed,red] (tTp) -- node[midway,below left,black] {$i+1$} (tT);
\draw[->,blue] (bp) -- node[midway,below right,black] {$i$} (Tp);
\draw[->,dashed,red] (Tp) -- node[midway,above right,black] {$i+1$} (T);
\draw[->,red] (tTp) -- node[midway,right,black] {$i+1$} (b);
\draw[->,dashed,blue] (bp) -- node[midway,below,black] {$i$} (tTp);
\end{tikzpicture}
\]
where the dashed lines denote K-crystal operators. Note that $b',U \in \svssyt^n_w(\Lambda_k)$ by the induction assumption, and so $b, T \in \svssyt^n_{s_j w}(\Lambda_k)$

The proof that $T = f_j^K T'$ implies $b, \widetilde{T} \in \svssyt^n_{s_j w}(\Lambda_k)$ is similar to the previous case. 

Last, the case of $\widetilde{T} = f_j T'$ is impossible for $j = i+1$ as then $i, i+1 \in \widetilde{T}$, which implies $e_j \widetilde{T} = 0$. For $j = i - 1$, observe $\widetilde{T}$ contains a box with $i, i+1$ but $i-1 \notin \widetilde{T}$. Therefore the local crystal structure around $\widetilde{T}$ is
\[
\begin{tikzpicture}[>=latex,scale=2]
\node (T) at (1,0.5) {$T$};
\node (b) at (0,0.5) {$b$};
\node (tT) at (0,-0.2) {$\widetilde{T}$};
\node (Tp) at (-1,-0.2) {$T'$};
\node (bp) at (-1,0.5) {$b'$};
\node (U) at (-2,0.5) {$U$};
\node (V) at (-0.5,1) {$V$};
\draw[->,blue] (b) -- node[midway,above,black] {$i$} (T);
\draw[->,dashed,blue] (b) -- node[midway,right,black] {$i$} (tT);
\draw[->,red] (Tp) -- node[midway,below,black] {$i-1$} (tT);
\draw[->,blue] (bp) -- node[midway,right,black] {$i$} (Tp);
\draw[->,dashed,red] (V) -- node[midway,above left,black] {$i-1$} (bp);
\draw[->,red] (V) -- node[midway,above right,black] {$i-1$} (b);
\draw[->,dashed,blue] (bp) -- node[midway,above,black] {$i$} (U);
\draw[->,dashed,red] (Tp) -- node[midway,below left,black] {$i-1$} (U);
\end{tikzpicture}
\]
Note that $b', U, V \in \svssyt^n_w(\Lambda_k)$ by the induction assumption. However, $V \in \svssyt^n_w(\Lambda_k)$ implies that $\widetilde{T} \in \svssyt^n_w(\Lambda_k)$, which is impossible.
\end{proof}

\begin{thm}
\label{thm:Krystal_columns}
The $U_q(\fsl_n)$-crystal $\svssyt^n(\Lambda_k)$ is a K-crystal.
\end{thm}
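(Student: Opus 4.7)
The plan is to verify conditions (K.1)--(K.3) of Definition~\ref{def:Krystal}, treating them in the order (K.2), (K.3), (K.1). Condition (K.2) is immediate from Lemma~\ref{lemma:demazure_redexp}, so the real work lies in establishing (K.3), from which (K.1) will follow essentially for free.

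For (K.3), I would induct on the Coxeter length of $w$. The base case $w = e$ is trivial since $\svssyt^n_e(\Lambda_k) = \{u_{\Lambda_k}\}$ has character $\xx^{\Lambda_k} = L_{\Lambda_k}$. For the inductive step, write $w = s_i w'$ with $\ell(w) = \ell(w') + 1$; the recursive construction of the K-Demazure crystal, together with the identity $(e_i^K)^{\max} e_i^{\max} T = b_{S(T)}$ (where $b_{S(T)}$ is the highest weight element of the $i$-K-string $S(T)$ containing $T$), shows that $\svssyt^n_w(\Lambda_k)$ is exactly the union of those $i$-K-strings $S$ whose highest weight $b_S$ lies in $\svssyt^n_{w'}(\Lambda_k)$. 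Partitioning $\svssyt^n(\Lambda_k)$ into $i$-K-strings and invoking Proposition~\ref{prop:demazure_strings} (so that $\svssyt^n_{w'}(\Lambda_k) \cap S$ is one of $\emptyset$, $\{b_S\}$, $S$), it suffices to verify the two local identities $\varpi_i\,\wt(b_S) = \ch(S)$ and $\varpi_i\,\ch(S) = \ch(S)$, which yields the string-by-string equality $\ch(\svssyt^n_w(\Lambda_k) \cap S) = \varpi_i \ch(\svssyt^n_{w'}(\Lambda_k) \cap S)$; summing then gives $\ch(\svssyt^n_w(\Lambda_k)) = \varpi_w \xx^{\Lambda_k} = L_{w \Lambda_k}$. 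The calculation is tractable because the signature rule restricts $\varphi_i$ to $\{0,1\}$ on $\svssyt^n(\Lambda_k)$, so each $i$-K-string is either a singleton $\{b\}$ (with $\mu_i = \mu_{i+1}$) or a triple $\{b, f_i b, f_i^K b\}$ (with $\mu_i = \mu_{i+1} + 1$ and the third element weighted by $\beta$); in both cases the identities reduce to the standard formula $\pi_i(x_i^p x_{i+1}^q) = \sum_{j=0}^{p-q} x_i^{p-j} x_{i+1}^{q+j}$ and its extension to $p < q$.

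For (K.1), I would classify the highest weight elements: any such $T \in \svssyt^n(\Lambda_k)$ has weight $\Lambda_m$ for some $k \le m \le n$, and column-strictness forces its boxes $A_1, \ldots, A_k$ to partition $\{1, \ldots, m\}$ into consecutive intervals $\{a_{j-1}+1, \ldots, a_j\}$. Hence $T$ contains $\{i, i+1\}$ in some box as soon as any $|A_j| \geq 2$, and $e_i^K T = 0$ for all $i$ precisely when each $|A_j| = 1$, i.e., $T = u_{\Lambda_k}$. Connectedness then follows by applying (K.3) at $w = w_0$: the equality $\ch(\svssyt^n_{w_0}(\Lambda_k)) = L_{w_0 \Lambda_k} = \G_{\Lambda_k} = \ch(\svssyt^n(\Lambda_k))$ combined with the containment $\svssyt^n_{w_0}(\Lambda_k) \subseteq \svssyt^n(\Lambda_k)$ forces the two sets to coincide, so every element reduces to $u_{\Lambda_k}$ via the alternating $e_i^{\max}$ and $(e_i^K)^{\max}$ operators.

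The main obstacle I anticipate is the identity $\varpi_i \ch(S) = \ch(S)$ on a length-$1$ $i$-K-string, which hinges on the nontrivial cancellation $\pi_i(x_i^a x_{i+1}^a) + \pi_i(x_i^{a-1} x_{i+1}^{a+1}) = x_i^a x_{i+1}^a - x_i^a x_{i+1}^a = 0$; once this idempotency of $\varpi_i$ on full $i$-K-strings is pinned down, the rest of the induction is routine bookkeeping.
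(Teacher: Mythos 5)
Your proposal is correct and takes essentially the same route as the paper: (K.2) is quoted from Lemma~\ref{lemma:demazure_redexp}, (K.3) is obtained by combining Proposition~\ref{prop:demazure_strings} with the string-by-string Demazure--Lascoux computation (which the paper leaves implicit in ``the definition of the Demazure--Lascoux operators''), and (K.1) is a direct check. Your extra details -- the identity $(e_i^K)^{\max} e_i^{\max} T = b_{S(T)}$ for single columns, the local identities $\varpi_i \wt(b_S) = \ch(S)$ and $\varpi_i \ch(S) = \ch(S)$, and deducing connectedness from the $w_0$ case of (K.3) -- are all sound fillings-in of what the paper states tersely.
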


\begin{proof}
Checking~(K.1) is a straightforward computation.
Lemma~\ref{lemma:demazure_redexp} establishes Property~(K.2).
Property~(K.3) follows from Proposition~\ref{prop:demazure_strings} and from the definition of the Demazure--Lascoux operators. 
\end{proof}

\subsection{Towards the general case}
\label{sec:gen_case}

When trying to construct a K-crystal structure on $\svssyt^n(\lambda)$ for general $\lambda$, we note the K-crystal operators given in Definition~\ref{def:K_crystal_ops}, where (inspired by (H.2)) we instead consider $f_i^K(T) = 0$ whenever $e_i(T) \neq 0$ or $f_i(T) = 0$, do not seem to extend to give a K-crystal structure on $\svssyt^n(\lambda)$.

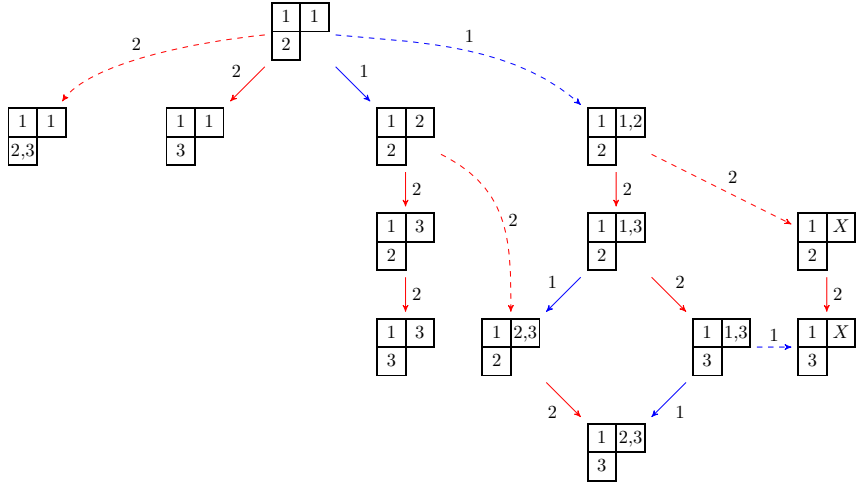
\begin{figure}
\[
\ytableausetup{boxsize=1.5em}
\scalebox{0.7}{
\begin{tikzpicture}[scale=2]
\node (hw) at (0,0) {\ytableaushort{11,2}};
\node (f1) at (1,-1) {\ytableaushort{12,2}};
\node (f21) at (1,-2) {\ytableaushort{13,2}};
\node (f221) at (1,-3) {\ytableaushort{13,3}};
\node (f2) at (-1,-1) {\ytableaushort{11,3}};
\node (k1) at (3,-1) {\ytableaushort{1{1,\!2},2}};
\node (f2k1) at (3,-2) {\ytableaushort{1{1,\!3},2}};
\node (f22k1) at (4,-3) {\ytableaushort{1{1,\!3},3}};
\node (f12k1) at (2,-3) {\ytableaushort{1{2,\!3},2}};
\node (f122k1) at (3,-4) {\ytableaushort{1{2,\!3},3}};
\node (k2) at (-2.5,-1) {\ytableaushort{11,{2,\!3}}};
\node (k21) at (5,-2) {\ytableaushort{1X,2}};
\node (f2k21) at (5,-3) {\ytableaushort{1X,3}};
\draw[->,blue] (hw) -- node[midway,above right,black] {$1$} (f1);
\draw[->,blue,dashed] (hw) .. controls (1,-0.1) and (2,-0.1) .. node[midway,above right,black] {$1$} (k1);
\draw[->,red] (hw) -- node[midway,above left,black] {$2$} (f2);
\draw[->,red] (f1) -- node[midway,right,black] {$2$} (f21);
\draw[->,red] (f21) -- node[midway,right,black] {$2$} (f221);
\draw[->,red,dashed] (hw) .. controls (-1,-0.1) and (-2,-0.3) .. node[midway,above left,black] {$2$} (k2);
\draw[->,red,dashed] (f1) .. controls (2,-1.5) and (2,-2) .. node[midway,right,black] {$2$} (f12k1);
\draw[->,red] (k1) -- node[midway,right,black] {$2$} (f2k1);
\draw[->,red] (f2k1) -- node[midway,above right,black] {$2$} (f22k1);
\draw[->,blue] (f2k1) -- node[midway,above left,black] {$1$} (f12k1);
\draw[->,blue] (f22k1) -- node[midway,below right,black] {$1$} (f122k1);
\draw[->,red] (f12k1) -- node[midway,below left,black] {$2$} (f122k1);
\draw[->,red,dashed] (k1) -- node[midway,above right,black] {$2$} (k21);
\draw[->,red] (k21) -- node[midway,right,black] {$2$} (f2k21);
\draw[->,blue,dashed] (f22k1) -- node[midway,above,black] {$1$} (f2k21);
\end{tikzpicture}
}
\]
\caption{The K-Demazure crystal $\svssyt^3_{s_2s_1}(\Lambda_2 + \Lambda_1)$ using the na\"ive generalization of the K-crystal operators given by Definition~\ref{def:K_crystal_ops}, where $X = \{1,2,3\}$.}
\label{fig:bad_demazure}
\end{figure}

\begin{ex}
We have
\begin{align*}
\varpi_{s_1}(x_1^2 x_2) & = \beta x_1^2 x_2^2 + x_1^2 x_2 + x_1 x_2^2,
\\ \varpi_{s_2 s_1}(x_1^2 x_2) & = 
\beta^{2} ( x_{1}^{2} x_{2}^{2} x_{3} + x_{1}^{2} x_{2} x_{3}^{2})
\\ & \hspace{10pt} + \beta (x_{1}^{2} x_{2}^{2} + 2 x_{1}^{2} x_{2} x_{3} + x_{1} x_{2}^{2} x_{3} + x_{1}^{2} x_{3}^{2} + x_{1} x_{2} x_{3}^{2})
\\ & \hspace{10pt} + x_{1}^{2} x_{2} + x_{1} x_{2}^{2} + x_{1}^{2} x_{3} + x_{1} x_{2} x_{3} + x_{1} x_{3}^{2},
\end{align*}
If we attempt to naturally extend the K-crystal operators, for $\svssyt^3_{s_2 s_1}(\Lambda_2 + \Lambda_1)$ we obtain the K-Demazure crystal given by Figure~\ref{fig:bad_demazure}.
Note that for the $1$-K-string $S$ starting at the tableau $T = \scalebox{0.7}{$\ytableaushort{1{1,\!3},3}$}$, we have
\[
S \cap \svssyt^3_{s_2 s_1}(\Lambda_2 + \Lambda_1) = \{ T, f_1 T, f_1^K T \} \neq \emptyset, S, \{ T \}.
\]
Hence, the na\"ive generalization of Proposition~\ref{prop:demazure_strings} is not true in this case. Moreover, if we attempt to extend this structure to $\svssyt^3_{s_1 s_2 s_1}(\Lambda_2 + \Lambda_1)$, then we fail to obtain one of the following two tableaux:
\[
\ytableaushort{{1,\!2}3,3}\,,
\qquad\qquad
\ytableaushort{13,{2,\!3}}\,.
\]
\end{ex}

In fact, there does not exist any K-crystal structure for general shapes that satisfies our heuristic~(H.2).

\begin{prop}
\label{prop:Krystal_counterex}
There does not exist a K-crystal structure for $\svssyt^3(\Lambda_2+\Lambda_1)$ that also satisfies~(H.2).
\end{prop}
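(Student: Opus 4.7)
The plan is to derive a contradiction by analyzing the three weight-$(1,1,2)$ elements of $\svssyt^3(\Lambda_2+\Lambda_1)$, namely
$T_1 = \ytableaushort{{1,\!2}3,3}$, $T_2 = \ytableaushort{13,{2,\!3}}$, and $T_3 = \ytableaushort{1{2,\!3},3}$,
and then comparing the K-Demazure crystals obtained from the two reduced expressions $s_1 s_2 s_1$ and $s_2 s_1 s_2$ of $w_0$, which by (K.2) must agree.

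First I would enumerate all candidate K-preimages of each $T_k$ consistent with (H.2). For each $i \in \{1,2\}$ and each $T_k$, one removes an $(i+1)$ from a cell and checks whether the resulting $T'$ satisfies both $e_i T' = 0$ and $f_i T' \neq 0$. A direct case check shows that the only surviving K-preimage for any $T_k$ is $\ytableaushort{13,3}$ via $f_1^K$: every $f_2^K$-candidate either empties a cell, or else has its $2$-signature reducing to empty (as happens for $\ytableaushort{13,2}$ as a preimage of $T_2$ and for $\ytableaushort{12,3}$ as a preimage of $T_3$), forcing $f_2 = 0$ and thereby violating (H.2). Since $f_1^K$ is a function, $f_1^K(\ytableaushort{13,3})$ coincides with at most one of $T_1, T_2, T_3$.

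Next I would trace the recursive construction of $\svssyt^3_w(\Lambda_2+\Lambda_1)$ for both reduced expressions, locating where each of $T_1, T_2, T_3$ can appear. The tableau $T_1$ is $1$-highest weight, with its unique $e_2$-parent $\ytableaushort{{1,\!2}2,3}$ itself $2$-highest and possessing no legal $f_2^K$-preimage (removing a 3 empties a box), so $T_1$ can only enter via $f_1^K(\ytableaushort{13,3})$ in an $s_1$-step. Meanwhile $T_3$ arrives as $f_2 \ytableaushort{1{2,\!3},2}$, and $\ytableaushort{1{2,\!3},2}$ in turn can only enter an $s_2$-step via the choice $f_2^K(\ytableaushort{12,2}) = \ytableaushort{1{2,\!3},2}$; similarly $T_2$ arrives as $f_2 \ytableaushort{12,{2,\!3}}$, which requires the opposite choice $f_2^K(\ytableaushort{12,2}) = \ytableaushort{12,{2,\!3}}$.

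I would then conclude via a dichotomy on the single value of $f_2^K(\ytableaushort{12,2})$. If $f_2^K(\ytableaushort{12,2}) = \ytableaushort{12,{2,\!3}}$, then $\ytableaushort{1{2,\!3},2}$ never enters any $S_j$ in the construction for $s_2 s_1 s_2$, and one verifies that no alternative ordinary-crystal or K-crystal path delivers $T_3$, giving $T_3 \notin \svssyt^3_{s_2 s_1 s_2}(\Lambda_2+\Lambda_1)$. In the opposite case $f_2^K(\ytableaushort{12,2}) = \ytableaushort{1{2,\!3},2}$, the predecessor $\ytableaushort{12,{2,\!3}}$ is absent from all $S_j$ for $s_1 s_2 s_1$, so $T_1$ and $T_2$ both compete for the single slot $f_1^K(\ytableaushort{13,3})$, and one is omitted. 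Either way, $\svssyt^3_{s_1 s_2 s_1}(\Lambda_2+\Lambda_1) \neq \svssyt^3_{s_2 s_1 s_2}(\Lambda_2+\Lambda_1)$, contradicting (K.2). The main obstacle is the bookkeeping: one must verify for each of the finitely many remaining free choices (such as the values of $f_1^K(\ytableaushort{11,3})$, $f_1^K(\ytableaushort{1{1,\!3},3})$, and $f_2^K(u_\lambda)$) that no ``exotic'' alternative can sneak the missing tableau back into the crystal through an unanticipated $f_i$- or $f_i^K$-path.
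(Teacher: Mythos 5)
Your overall plan matches the paper's: the same three weight-$x_1x_2x_3^2$ tableaux $T_1,T_2,T_3$, the observation that the only (H.2)-legal direct K-preimage of each $T_k$ is $\ytableaushort{13,3}$ via $f_1^K$, and a comparison of the two reduced words of $w_0$. Your first enumeration is essentially correct (one small slip: $\ytableaushort{13,2}$ is excluded because $e_2\neq 0$, i.e.\ its $2$-signature is $\bplus\bminus$, not because it cancels to empty). The argument breaks, however, at the repeated claims of the form ``can only enter via \dots''. A tableau can enter a K-Demazure crystal by applying a K-operator at a \emph{smaller} element and then ordinary crystal operators: for instance $T_1 = f_2 f_1 f_1^K\bigl(\ytableaushort{11,3}\bigr)$ once one chooses $f_1^K\bigl(\ytableaushort{11,3}\bigr) = \ytableaushort{1{1,\!2},3}$ (a choice (H.2) permits, and which is in fact forced, since $\ytableaushort{1{1,\!2},3}$ is an ordinary highest weight element that must itself reach $u_\lambda$); similarly $T_2 = f_2 f_1 f_2^K(u_\lambda)$ when $f_2^K(u_\lambda)=\ytableaushort{11,{2,\!3}}$, and $T_3 = f_1 f_2 f_2^K(u_\lambda)$ when $f_2^K(u_\lambda)=\ytableaushort{1{1,\!3},2}$. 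The last route directly falsifies your branch-1 assertion that, when $f_2^K\bigl(\ytableaushort{12,2}\bigr)=\ytableaushort{12,{2,\!3}}$, no alternative path delivers $T_3$ into the $s_2s_1s_2$ construction (in the recursive $S_j$ formulation you are using), and the first two undercut the branch-2 claim that $T_1,T_2$ compete only for the single slot $f_1^K\bigl(\ytableaushort{13,3}\bigr)$.

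The missing ingredient is the forcing of the other K-operator values, which you relegate to ``bookkeeping'' but which is where the content of the proof lies. Your intended contradiction is solely an inequality $\svssyt^3_{s_1s_2s_1}\neq\svssyt^3_{s_2s_1s_2}$ tracked through $T_1,T_2,T_3$, but that inequality is not determined by the single value $f_2^K\bigl(\ytableaushort{12,2}\bigr)$: it also depends on $f_2^K(u_\lambda)$, $f_1^K\bigl(\ytableaushort{11,3}\bigr)$ and $f_1^K(u_\lambda)$, and without constraining these a hypothetical structure can, e.g., leave $T_3$ out of \emph{both} sets in your first branch (take $f_1^K(u_\lambda)=0$), so no (K.2)-violation arises from $T_3$ alone; one must instead invoke (K.1)/(K.3), i.e.\ that \emph{every} tableau lies in $\svssyt^3_{w_0}(\Lambda_2+\Lambda_1)$. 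This is exactly how the paper proceeds: it first shows $e_2^K T_a=0$, then uses the fact that the highest weight tableaux $e_1e_2T_1=\ytableaushort{1{1,\!2},3}$ and $e_1e_2T_2=\ytableaushort{11,{2,\!3}}$ must themselves reach $u_\lambda$ to force $f_1^K\bigl(\ytableaushort{11,3}\bigr)$ and $f_2^K(u_\lambda)$, and only then concludes that $T_3$ admits no valid path to $u_\lambda$ for $w_0=s_2s_1s_2$. Finally, note that (K.2) is stated via the $(e_i^K)^{\max}e_i^{\max}$-strings, not via the recursive $S_j$-construction; for hypothetical operators these need not coincide (the $T_3$-route above is visible in the $S_j$-construction but not along the canonical $e$-string), so the case analysis should be run against the $e$-string definition, as in the paper.
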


\begin{proof}
For this proof, for a tableau $T$, we equate the weight $\wt(T)$ with the corresponding monomial $\xx^{\wt(T)}$.
Let $\lambda = \Lambda_2 + \Lambda_1$, and let $u_{\lambda}$ be the unique (highest) weight element of weight $x_1^2 x_2$.
We have
\begin{align*}
\G_{21}(x_1, x_2, x_3) & = 
x_1^2 x_2^2 x_3^2 \beta^3 + (2 x_1^2 x_2^2 x_3 + 2 x_1^2 x_2 x_3^2 + 2 x_1 x_2^2 x_3^2) \beta^2
\\ & \hspace{20pt} + (x_1^2 x_2^2 + 3 x_1^2 x_2 x_3 + 3 x_1 x_2^2 x_3 + x_1^2 x_3^2 + 3 x_1 x_2 x_3^2 + x_2^2 x_3^2) \beta
\\ & \hspace{20pt} + x_1^2 x_2 + x_1 x_2^2 + x_1^2 x_3 + 2 x_1 x_2 x_3 + x_2^2 x_3 + x_1 x_3^2 + x_2 x_3^2.
\end{align*}
Consider the three tableaux of weight $\beta x_1 x_2 x_3^2$:
\[
T_1 := \ytableaushort{{1,\!2}3,3}\,,
\qquad\qquad
T_2 := \ytableaushort{13,{2,\!3}}\,,
\qquad\qquad
T_3 := \ytableaushort{1{2,\!3},3}\,,
\]
Note that $e_1 e_2 T_a \neq 0$ for all $a = 1,2,3$, and so we have $e_2^K T_a = 0$. There exists a unique $a$ such that $e_1^K T_a \neq 0$ since there is a unique element of weight $x_1 x_3^2$, and we have $e_1 e_2^2 e_1^K T_a = u_{\lambda}$. Similarly, there exists a unique $a'$ such that $e_2^K e_ 2 T_{a'} \neq 0$ since there is a unique element of weight $x_1 x_2^2$, and we have $e_1 e_2^K e_2 T_{a'} = u_{\lambda}$. Therefore, $T_{a'}$ satisfies~(K.2).

Now by~(K.2) for
\[
e_1 e_2 T_1 = \ytableaushort{1{1,\!2},3}\,,
\qquad\qquad
e_1 e_2 T_2 = \ytableaushort{11,{2,\!3}}\,
\]
and weight considerations, we must have either
\[
e_2 e_1^K e_1 e_2 T_1 = u_{\lambda} = e_2^K e_1 e_2 T_2
\quad \text{or} \quad
e_2^K e_1 e_2 T_1 = u_{\lambda} = e_2 e_1^K e_1 e_2 T_2.
\]
In either case, both $T_1$ and $T_2$ satisfy~(K.2) for $w = s_2 s_1 s_2$.
Without loss of generality, suppose $e_2 e_1^K e_1 e_2 T_1 = u_{\lambda}$. In order to satisfy~(K.2) for $w = s_1 s_2 s_1$, we must have either $e_1^K T_1 = f_1 f_2 f_2 u_{\lambda}$ or $e_2^K e_2 T_1 = f_1 u_{\lambda}$.
Suppose $e_1^K T_1 = f_1 f_2 f_2 u_{\lambda}$, then for $T_2$ to satisfy~(K.2), we require $e_2^K e_2 T_2 = f_1 u_{\lambda}$. Similarly, if $e_1^K T_1 = f_1 f_2 f_2 u_{\lambda}$, then we must have $e_2^K e_2 T_2 = f_1 u_{\lambda}$.
In any of these cases, note that $e_2^K T_3 = 0$, $e_2^K e_2 T_3 = 0$, $e_2^K e_1 e_2 T_3 = 0$ by weight considerations and the only paths to $u_{\lambda}$ are
\[
e_1^K e_2 e_1 e_2 T_3 = e_1^K e_2 e_2 e_1 T_3 = u_{\lambda}.
\]
Hence, $T_3$ cannot satisfy~(K.2) for $w_0 = s_2 s_1 s_2$.
\end{proof}

Note that if we remove~(H.2) as being a requirement, then if we also have
\[
f_2^k \ytableaushort{13,2} = \ytableaushort{13,{2,\!3}},
\]
to Figure~\ref{fig:K_crystal_21_212}, then we would obtain K-crystal structure.
However, this would be highly unnatural given the Demazure--Lascoux operators.
Despite this, perhaps there is a natural notion of a K-crystal on rectangles given Theorem~\ref{thm:Krystal_rows} and Theorem~\ref{thm:Krystal_columns}.

\begin{conj}
\label{conj:Krystal_rectangles}
There exists a K-crystal structure also satisfying~(H.2) for $\svssyt^n(k\Lambda_i)$ (\textit{i.e.}, for rectangle shapes).
\end{conj}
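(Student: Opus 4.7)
The plan is to define K-crystal operators on $\svssyt^n(k\Lambda_i)$ by combining the single-row and single-column prescriptions of Definition~\ref{def:K_crystal_ops} in a way adapted to the rectangular shape, and then to verify properties (K.1)--(K.3) and (H.2) by reducing, as much as possible, to the established cases of Theorems~\ref{thm:Krystal_rows} and~\ref{thm:Krystal_columns}.

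The first step is to declare $f_i^K T = 0$ whenever $e_i T \neq 0$ or $f_i T = 0$, thereby building in (H.2) by fiat. For the remaining $T$, let $\bbb$ be the box on which the ordinary $f_i$ acts (determined by the signature rule via, say, the Far-Eastern reading word, which by Proposition~\ref{prop:reading_word_equality} is equivalent to the other reading conventions). One then sets $f_i^K T$ to be the result of adjoining an additional $i+1$ to a canonical location in $T$ determined by $\bbb$ together with the uniform row-length and uniform column-height of $k\Lambda_i$: on the row through $\bbb$ one uses the row prescription of Definition~\ref{def:K_crystal_ops}, and on the column through $\bbb$ one uses the column prescription, and the rigidity of a rectangle should force these two choices to agree in a single well-defined box. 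The operator $e_i^K$ is then forced as the formal inverse, and (H.1) follows by direct inspection since at most one extra $i+1$ can ever be inserted by this rule at a fixed box. Property~(K.1) should then follow by induction on $\excess(T)$: each additional entry can be peeled off by some $e_i^K$, reducing $T$ to a tableau of smaller excess and eventually to $u_\lambda$.

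The principal obstacle is (K.2), independence of $\svssyt^n_w(k\Lambda_i)$ on the reduced expression for $w$. By Matsumoto's theorem, this reduces to verifying commutation of $\{f_i,f_j,f_i^K,f_j^K\}$ when $|i-j|>1$ (essentially formal, as in Lemma~\ref{lemma:demazure_redexp}) and the braid relation when $|i-j|=1$. The latter is the hard case and requires the rectangular analog of Proposition~\ref{prop:demazure_strings}: each $i$-K-string intersects any K-Demazure crystal in either the empty set, the whole string, or only its highest weight element. This trichotomy is precisely what fails in Proposition~\ref{prop:Krystal_counterex}, where the non-rectangular shape $\Lambda_2+\Lambda_1$ produces columns of different heights that create K-strings with too much internal structure. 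The point of restricting to rectangles is that in $k\Lambda_i$ the uniform column heights should prevent the offending local configurations from arising; I would verify this by a case analysis of the possible local $\{f_i,f_{i+1}\}$-subcrystals that can appear within a $k\Lambda_i$ tableau, mirroring the case analysis of Proposition~\ref{prop:demazure_strings} but enriched to allow set-valued boxes in both a row and a column through $\bbb$.

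Finally, (K.3) should follow from (K.2) by induction on $\ell(w)$, the base case being trivial and the inductive step reducing to an $\fsl_2$-type calculation: the generating function over $\svssyt^n_{s_i w}(k\Lambda_i)\setminus \svssyt^n_w(k\Lambda_i)$ must match the contribution of $\beta \pi_i(x_{i+1}\cdot\bullet)+\pi_i$ in $\varpi_{s_i}L_{w\lambda}$. The trichotomy from the proof of (K.2) governs exactly this contribution, so (K.3) drops out once (K.2) is in hand. The main technical content thus lies squarely in establishing the K-string trichotomy for rectangles, and I expect this to be the only genuinely new ingredient required beyond what has already been developed for rows and columns.
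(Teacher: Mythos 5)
The statement you are addressing is Conjecture~\ref{conj:Krystal_rectangles}: the paper itself offers no proof of it, and your text does not supply one either. What you have written is a plan whose every genuinely hard step is deferred. First, the operators are never actually defined. For a rectangle $k\Lambda_i$ with $k>1$ and $i>1$, a box sits inside both a nontrivial row and a nontrivial column, and ``adding an $i+1$ to the rightmost box containing $i$'' (Definition~\ref{def:K_crystal_ops}) can violate semistandardness against the box below or to the right; your claim that ``the rigidity of a rectangle should force these two choices to agree in a single well-defined box'' is an assertion, not an argument, and the paper's own Section~\ref{sec:gen_case} shows (Equations~\eqref{eq:bad_tableaux1}--\eqref{eq:bad_tableaux2}) that even in small non-rectangular examples the correct K-operators must sometimes place the new entry in a box \emph{not} prescribed by the naive row/column rule, so there is no reason to trust the naive prescription without a construction and a verification. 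Second, and decisively, the rectangular analog of Proposition~\ref{prop:demazure_strings} --- the $i$-K-string trichotomy, which you correctly identify as the crux for (K.2) and (K.3) --- is exactly what you do not prove: ``I would verify this by a case analysis'' and ``I expect this to be the only genuinely new ingredient'' is where the entire mathematical content of the conjecture lives. Proposition~\ref{prop:Krystal_counterex} shows that this trichotomy-based scheme genuinely fails for $\Lambda_2+\Lambda_1$, and you give no structural reason (beyond uniform column heights ``should prevent'' bad configurations) why rectangles escape the same obstruction.

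There are also smaller gaps in the easy-looking steps. Your (K.1) argument, ``each additional entry can be peeled off by some $e_i^K$,'' presumes that a tableau of positive excess always admits some box containing both $i$ and $i+1$ after suitable applications of ordinary $e_j$'s; a box may contain non-consecutive entries only (e.g.\ $\{1,3\}$), and in the single-row case the paper needed the explicit argument of Lemma~\ref{lemma:coset_reprs} (apply $e_{m+1}^{\max}$ first, then $e_{m+1}^K$) rather than a one-line excess induction. Likewise, (H.1) is not automatic from your rule without knowing the rule precisely. In short, the proposal is a reasonable research outline --- it mirrors the strategy the paper uses for rows and columns (Theorems~\ref{thm:Krystal_rows} and~\ref{thm:Krystal_columns}) --- but it does not constitute a proof, and the statement remains, as in the paper, an open conjecture.
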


We believe a more natural condition would be to enforce Proposition~\ref{prop:demazure_strings}.
In this case, we require a weakening of the K-crystal structure, where the K-crystal operators depend on the choice of reduced expression for $w_0$ and every subword. For $\lambda = \Lambda_2 + \Lambda_1$ and $n = 3$, we give an example of this weak K-crystal structure for $w_0 = s_1 s_2 s_1$ by Figure~\ref{fig:K_crystal_21_212} and for $w_0 = s_1 s_2 s_1$ by Figure~\ref{fig:K_crystal_21_121}.

Let of focus on the weak K-crystal given by Figure~\ref{fig:K_crystal_21_121}. There are a few K-crystal operators that require more care than in the single row and single column cases; in particular
\begin{subequations}
\begin{align}
\ytableausetup{boxsize=1.5em}
\label{eq:bad_tableaux1}
f_1^K \ytableaushort{1{1,\!3},3} & = \ytableaushort{1{1,\!3},{2,\!3}}\,,
&
f_1^K \ytableaushort{1X,3} & = \ytableaushort{1X,{2,\!3}}\,,
\\ \label{eq:bad_tableaux2}
f_2^K \ytableaushort{1{1,\!2},2} & = \ytableaushort{1X,2}\,,
&
f_2^K \ytableaushort{12,2} & = \ytableaushort{12,{2,\!3}}\,.
\end{align}
\end{subequations}
In particular, note that in~\eqref{eq:bad_tableaux2}, the extra $1$ affects the result of $f_2^K$.

\begin{sidewaysfigure}[ht]
\[
\ytableausetup{boxsize=1.5em}
\scalebox{0.8}{
\begin{tikzpicture}[xscale=2.3,yscale=2,baseline=470]
\node (hw) at (0,0) {\ytableaushort{11,2}};
\node (f1) at (-1,-1) {\ytableaushort{12,2}};
\node (f21) at (-1,-2) {\ytableaushort{13,2}};
\node (f221) at (-1,-3) {\ytableaushort{13,3}};
\node (f2) at (1,-1) {\ytableaushort{11,3}};
\node (f12) at (1,-2) {\ytableaushort{12,3}};
\node (f112) at (1,-3) {\ytableaushort{22,3}};
\node (lw) at (0,-4) {\ytableaushort{12,3}};
\node (k1) at (5,-1) {\ytableaushort{1{1,\!2},2}};
\node (f2k1) at (5,-2) {\ytableaushort{1{1,\!3},2}};
\node (f22k1) at (6,-3) {\ytableaushort{1{1,\!3},3}};
\node (f12k1) at (4,-3) {\ytableaushort{1{2,\!3},2}};
\node (f122k1) at (5,-4) {\ytableaushort{1{2,\!3},3}};
\node (f1122k1) at (5,-5) {\ytableaushort{2{2,\!3},3}};
\node (k2) at (-2.5,-1) {\ytableaushort{11,{2,\!3}}};
\node (f1k2) at (-2.5,-2) {\ytableaushort{12,{2,\!3}}};
\node (f21k2) at (-2.5,-3) {\ytableaushort{13,{2,\!3}}};
\node (k1f2) at (2,-1) {\ytableaushort{1{1,\!2},3}};
\node (f1k1f2) at (2,-2) {\ytableaushort{{1,\!2}2,3}};
\node (f21k1f2) at (2,-3) {\ytableaushort{{1,\!2}3,3}};
\node (k21) at (3,-2) {\ytableaushort{1X,2}};
\node (f2k21) at (3,-3) {\ytableaushort{1X,3}};
\node (f12k21) at (3,-4) {\ytableaushort{{1,\!2}{2,\!3},3}};
\node (k12) at (-4,-3) {\ytableaushort{1{1,\!2},{2,\!3}}};
\node (f2k12) at (-4,-4) {\ytableaushort{1{1,\!3},{2,\!3}}};
\node (f12k12) at (-4,-5) {\ytableaushort{1{2,\!3},{2,\!3}}};
\node (k212) at (0,-5) {\ytableaushort{1X,{2,\!3}}};
\draw[->,blue] (hw) -- node[midway,above left,black] {$1$} (f1);
\draw[->,red] (hw) -- node[midway,above right,black] {$2$} (f2);
\draw[->,red] (f1) -- node[midway,left,black] {$2$} (f21);
\draw[->,red] (f21) -- node[midway,left,black] {$2$} (f221);
\draw[->,blue] (f2) -- node[midway,right,black] {$1$} (f12);
\draw[->,blue] (f12) -- node[midway,right,black] {$1$} (f112);
\draw[->,red] (f112) -- node[midway,below right,black] {$2$} (lw);
\draw[->,blue] (f221) -- node[midway,below left,black] {$1$} (lw);
\draw[->,blue,dashed] (k2) -- node[midway,above left,black] {$1$} (k12);
\draw[->,red] (k12) -- node[midway,right,black] {$2$} (f2k12);
\draw[->,blue] (f2k12) -- node[midway,right,black] {$1$} (f12k12);
\draw[->,blue,dashed] (f22k1) .. controls (6.5,-6) and (2,-7) .. node[midway,below,black] {$1$} (f2k12);
\draw[->,red,dashed] (f1k2) -- node[midway,above left,black] {$2$} (f12k12);
\draw[->,blue,dashed] (f2) -- node[midway,above,black] {$1$} (k1f2);
\draw[->,blue] (k1f2) -- node[midway,right,black] {$1$} (f1k1f2);
\draw[->,red] (f1k1f2) -- node[midway,right,black] {$2$} (f21k1f2);
\draw[->,blue,dashed] (f221) .. controls (0.7,-3.6) and (1.5,-3.5) .. node[pos=0.2,above,black] {$1$} (f21k1f2);
\draw[->,red,dashed] (hw) .. controls (-1,-0.1) and (-2,-0.3) .. node[midway,above left,black] {$2$} (k2);
\draw[->,blue] (k2) -- node[midway,right,black] {$1$} (f1k2);
\draw[->,red] (f1k2) -- node[midway,right,black] {$2$} (f21k2);
\draw[->,red,dashed] (f1) .. controls (-0.3,-3) and (3.2,-1.9) .. node[midway,above,black] {$2$} (f12k1);
\draw[->,blue,dashed] (hw) .. controls (1,-0.1) and (2,-0.1) .. node[midway,above right,black] {$1$} (k1);
\draw[->,red] (k1) -- node[midway,right,black] {$2$} (f2k1);
\draw[->,red] (f2k1) -- node[midway,above right,black] {$2$} (f22k1);
\draw[->,blue] (f2k1) -- node[midway,above left,black] {$1$} (f12k1);
\draw[->,blue] (f22k1) -- node[midway,below right,black] {$1$} (f122k1);
\draw[->,red] (f12k1) -- node[midway,below left,black] {$2$} (f122k1);
\draw[->,blue] (f122k1) -- node[midway,right,black] {$1$} (f1122k1);
\draw[->,red,dashed] (f112) .. controls (2,-5) and (4,-5) .. node[midway,below,black] {$2$} (f1122k1);
\draw[->,red,dashed] (k1) -- node[midway,above,black] {$2$} (k21);
\draw[->,red] (k21) -- node[midway,right,black] {$2$} (f2k21);
\draw[->,blue] (f2k21) -- node[midway,right,black] {$1$} (f12k21);
\draw[->,blue,dashed] (f2k1) -- node[midway,above,black] {$1$} (k21);
\draw[->,red,dashed] (f1k1f2) .. controls (2.7,-2) and (2.25,-4) .. node[midway,above right,black] {$2$} (f12k21);
\draw[->,red,dashed] (k12) -- node[midway,below left,black] {$2$} (k212);
\draw[->,blue,dashed] (f2k21) -- node[pos=0.7,below right,black] {$1$} (k212);
\end{tikzpicture}
}
\]
\vspace{-40pt}
\caption{A weak K-crystal structure on $\svssyt^3(\Lambda_2  + \Lambda_1)$, where $X = \{1,2,3\}$, for $w_0 = s_2 s_1 s_2$.}
\label{fig:K_crystal_21_212}
\end{sidewaysfigure}

\begin{sidewaysfigure}[ht]
\[
\ytableausetup{boxsize=1.5em}
\scalebox{0.8}{
\begin{tikzpicture}[xscale=2.3,yscale=2,baseline=470]
\node (hw) at (0,0) {\ytableaushort{11,2}};
\node (f1) at (-1,-1) {\ytableaushort{12,2}};
\node (f21) at (-1,-2) {\ytableaushort{13,2}};
\node (f221) at (-1,-3) {\ytableaushort{13,3}};
\node (f2) at (1,-1) {\ytableaushort{11,3}};
\node (f12) at (1,-2) {\ytableaushort{12,3}};
\node (f112) at (1,-3) {\ytableaushort{22,3}};
\node (lw) at (0,-4) {\ytableaushort{12,3}};
\node (k1) at (5,-1) {\ytableaushort{1{1,\!2},2}};
\node (f2k1) at (5,-2) {\ytableaushort{1{1,\!3},2}};
\node (f22k1) at (6,-3) {\ytableaushort{1{1,\!3},3}};
\node (f12k1) at (4,-3) {\ytableaushort{1{2,\!3},2}};
\node (f122k1) at (5,-4) {\ytableaushort{1{2,\!3},3}};
\node (f1122k1) at (5,-5) {\ytableaushort{2{2,\!3},3}};
\node (k2) at (-2.5,-1) {\ytableaushort{11,{2,\!3}}};
\node (f1k2) at (-2.5,-2) {\ytableaushort{12,{2,\!3}}};
\node (f21k2) at (-2.5,-3) {\ytableaushort{13,{2,\!3}}};
\node (k1f2) at (2,-1) {\ytableaushort{1{1,\!2},3}};
\node (f1k1f2) at (2,-2) {\ytableaushort{{1,\!2}2,3}};
\node (f21k1f2) at (2,-3) {\ytableaushort{{1,\!2}3,3}};
\node (k21) at (3,-2) {\ytableaushort{1X,2}};
\node (f2k21) at (3,-3) {\ytableaushort{1X,3}};
\node (f12k21) at (3,-4) {\ytableaushort{{1,\!2}{2,\!3},3}};
\node (k12) at (-4,-3) {\ytableaushort{1{1,\!2},{2,\!3}}};
\node (f2k12) at (-4,-4) {\ytableaushort{1{1,\!3},{2,\!3}}};
\node (f12k12) at (-4,-5) {\ytableaushort{1{2,\!3},{2,\!3}}};
\node (k212) at (0,-5) {\ytableaushort{1X,{2,\!3}}};
\draw[->,blue] (hw) -- node[midway,above left,black] {$1$} (f1);
\draw[->,red] (hw) -- node[midway,above right,black] {$2$} (f2);
\draw[->,red] (f1) -- node[midway,left,black] {$2$} (f21);
\draw[->,red] (f21) -- node[midway,left,black] {$2$} (f221);
\draw[->,blue] (f2) -- node[midway,right,black] {$1$} (f12);
\draw[->,blue] (f12) -- node[midway,right,black] {$1$} (f112);
\draw[->,red] (f112) -- node[midway,below right,black] {$2$} (lw);
\draw[->,blue] (f221) -- node[midway,below left,black] {$1$} (lw);
\draw[->,blue,dashed] (k2) -- node[midway,above left,black] {$1$} (k12);
\draw[->,red] (k12) -- node[midway,right,black] {$2$} (f2k12);
\draw[->,blue] (f2k12) -- node[midway,right,black] {$1$} (f12k12);
\draw[->,blue,dashed] (f22k1) .. controls (6.5,-6) and (2,-7) .. node[midway,below,black] {$1$} (f2k12);
\draw[->,red,dashed] (f12k1) .. controls (4,-6) and (1,-6) .. node[midway,below,black] {$2$} (f12k12);
\draw[->,blue,dashed] (f2) -- node[midway,above,black] {$1$} (k1f2);
\draw[->,blue] (k1f2) -- node[midway,right,black] {$1$} (f1k1f2);
\draw[->,red] (f1k1f2) -- node[midway,right,black] {$2$} (f21k1f2);
\draw[->,blue,dashed] (f221) .. controls (0.7,-3.6) and (1.5,-3.5) .. node[pos=0.2,above,black] {$1$} (f21k1f2);
\draw[->,red,dashed] (hw) .. controls (-1,-0.1) and (-2,-0.3) .. node[midway,above left,black] {$2$} (k2);
\draw[->,blue] (k2) -- node[midway,right,black] {$1$} (f1k2);
\draw[->,red] (f1k2) -- node[midway,right,black] {$2$} (f21k2);
\draw[->,red,dashed] (f1) -- node[midway,above,black] {$2$} (f1k2);
\draw[->,blue,dashed] (hw) .. controls (1,-0.1) and (2,-0.1) .. node[midway,above right,black] {$1$} (k1);
\draw[->,red] (k1) -- node[midway,right,black] {$2$} (f2k1);
\draw[->,red] (f2k1) -- node[midway,above right,black] {$2$} (f22k1);
\draw[->,blue] (f2k1) -- node[midway,above left,black] {$1$} (f12k1);
\draw[->,blue] (f22k1) -- node[midway,below right,black] {$1$} (f122k1);
\draw[->,red] (f12k1) -- node[midway,below left,black] {$2$} (f122k1);
\draw[->,blue] (f122k1) -- node[midway,right,black] {$1$} (f1122k1);
\draw[->,red,dashed] (f112) .. controls (2,-5) and (4,-5) .. node[midway,below,black] {$2$} (f1122k1);
\draw[->,red,dashed] (k1) -- node[midway,above,black] {$2$} (k21);
\draw[->,red] (k21) -- node[midway,right,black] {$2$} (f2k21);
\draw[->,blue] (f2k21) -- node[midway,right,black] {$1$} (f12k21);
\draw[->,blue,dashed] (f2k1) -- node[midway,above,black] {$1$} (k21);
\draw[->,red,dashed] (f1k1f2) .. controls (2.7,-2) and (2.25,-4) .. node[midway,above right,black] {$2$} (f12k21);
\draw[->,red,dashed] (k12) -- node[midway,below left,black] {$2$} (k212);
\draw[->,blue,dashed] (f2k21) -- node[pos=0.7,below right,black] {$1$} (k212);
\end{tikzpicture}
}
\]
\vspace{-40pt}
\caption{A weak K-crystal structure on $\svssyt^3(\Lambda_2  + \Lambda_1)$, where $X = \{1,2,3\}$, for $w_0 = s_1 s_2 s_1$.}
\label{fig:K_crystal_21_121}
\end{sidewaysfigure}

A possible construction of a K-crystal on $\svssyt^n(\lambda)$ is to define a notion of tensor products of K-crystals. Such a tensor product rule should have connected components whose characters are Grothendieck polynomials.
Then one could take a connected component of $\svssyt^n(\Lambda_1)^{\otimes \lvert \lambda \rvert}$ containing a minimal highest weight element of weight $\lambda$.

One approach to showing such the tensor product rule would be use Equation~\eqref{eq:grothendieck_pieri_rule} and construct a bijection
\begin{equation}
\label{eq:pieri_Krystal_rule}
\svssyt^n(\Lambda_1) \otimes \svssyt^n(\lambda) \iso \bigoplus_{\nu} \svssyt^n(\nu),
\end{equation}
where $\nu / \lambda$ is a single box and $\ell(\nu) \leq n$, that we consider as a K-crystal isomorphism.
By using~\cite[Thm.~5.4]{Buch02}, the minimal highest weight elements should have a reading word that is highest weight except we read a box in \emph{increasing} order, as opposed to decreasing order in how we construct the crystal operators. Therefore, the minimal highest weight elements should be of the form
\[
\boxed{i_1 < \cdots < i_k} \otimes T_{\lambda}
\]
where $i_1, \dotsc, i_k$ are rows with addable corners in $\lambda$. The bijection in~\eqref{eq:pieri_Krystal_rule} would be given by the insertion $S \xleftarrow{B} T$ defined in~\cite{Buch02}; more explicitly, the bijection would be given by $S \otimes T \mapsto (S \xleftarrow{B} T)$.

\begin{prob}
\label{prob:tensor_products}
Construct a tensor product rule for (weak) K-crystals.
\end{prob}

Recall that the insertion given in~\cite{Buch02} cannot be considered as an associative product (unlike RSK); see~\cite[Ex.~4.3]{Buch02}. Therefore, we do not expect a tensor product rule to necessarily be associative. Note that a solution to Open Problem~\ref{prob:K_jdt_svt} would likely help as we will want K-jdt to be a K-crystal isomorphism. Furthermore, the $U_q(\fsl_n)$-crystal structure on $\svssyt^n(\lambda)$ is not local in the sense that it only changes one box. In contrast, the tensor product rule should only change one factor. It would be very interesting (and represent progress towards Problem~\ref{prob:tensor_products}) to determine an K-theoretic analog of the quantum group with representations whose characters are Grothendieck polynomials.

Another possible construction would be to use another combinatorial model such as excited Young diagrams. In Remark~\ref{rem:type_2_operators}, we saw that the K-crystal operators given in Definition~\ref{def:K_crystal_ops} correspond to an Type~$2$ excitation. Recall that we had to introduce a Type~$1'$ elementary excitation in order to construct the $U_q(\fsl_n)$-crystal structure. Moreover, from Figure~\ref{fig:K_crystal_21_121}, there is at least one other elementary excitation necessary to obtain the (weak) K-crystal structure:
\[
\text{Type $2'$:} \quad
\begin{tikzpicture}[xscale=0.5,yscale=-0.5,baseline=-17]
\fill[blue!30] (0,0) rectangle (2,1);
\draw[step=1] (0,0) grid (2,2);
\end{tikzpicture}
\longmapsto
\begin{tikzpicture}[xscale=0.5,yscale=-0.5,baseline=-17]
\fill[blue!30] (0,0) rectangle (2,1);
\fill[blue!30] (0,1) rectangle (1,2);
\draw[step=1] (0,0) grid (2,2);
\end{tikzpicture}\,.
\]
However, it is not clear that this is the only additional excitation required to obtain a (weak) K-crystal on $\svssyt^n(\lambda)$.

We note that the K-crystal operator $f_i^K$ is likely to act on cyclically decreasing factorizations given in Section~\ref{sec:hecke} by using the relation $pp \equiv p$ and bringing the $p$ into the $i+1$ factor to the left. As seen in~\eqref{eq:bad_tableaux1}, the na\"ive choice does not always work. Yet, on those examples, the $f_i^K$ are well-behaved under Hecke insertion:
\begin{align*}
\left(\ytableaushort{13,2}\,, \ytableaushort{1{1,\!3},3} \right)
& \xleftarrow[\hspace{20pt}]{H}
\begin{bmatrix}
1 & 1 & 3 & 3 \\
3 & 1 & 3 & 2
\end{bmatrix}
\longleftrightarrow (3 \, 1) () (3 \, 2),
\allowdisplaybreaks \\
\left(\ytableaushort{13,2}\,, \ytableaushort{1{1,\!3},{2,\!3}} \right)
& \xleftarrow[\hspace{20pt}]{H}
\begin{bmatrix}
1 & 1 & 2 & 3 & 3 \\
3 & 1 & 3 & 3 & 2
\end{bmatrix}
\longleftrightarrow (1 \, 3) (3) (3 \, 2),
\allowdisplaybreaks \\
\left(\ytableaushort{13,2}\,, \ytableaushort{1X,3} \right)
& \xleftarrow[\hspace{20pt}]{H}
\begin{bmatrix}
1 & 1 & 2 & 3 & 3 \\
3 & 1 & 1 & 3 & 2
\end{bmatrix}
\longleftrightarrow (3 \, 1) (1) (3 \, 2),
\allowdisplaybreaks \\
\left(\ytableaushort{13,2}\,, \ytableaushort{1X,{2,\!3}} \right)
& \xleftarrow[\hspace{20pt}]{H}
\begin{bmatrix}
1 & 1 & 2 & 2 & 3 & 3 \\
3 & 1 & 3 & 1 & 3 & 2
\end{bmatrix}
\longleftrightarrow (3 \, 1) (3 \, 1) (3 \, 2),
\end{align*}
where $X = \{1, 2, 3\}$.

We remark that our definition of a (weak) K-crystal is ad-hoc based on natural properties that we expect to see in a K-theory analog of crystals. We note that from our expected properties (H.1)--(H.2), the K-crystal operators should satisfy the following axioms in addition to the usual Stembridge crystal axioms:
\begin{enumerate}
\item $f_i^K b = b'$ if and only if $b = e_i^K b'$ for all $b,b' \in B$;
\item $f_i^K f_i^K b = 0$ for all $b \in B$;
\item for all $b \in B$ such that $f_i^K b \neq 0$:
\begin{enumerate}
  \item $\wt(f_i^K b) = \wt(b) +\Lambda_{i+1} - \Lambda_i$;
  \item $\varepsilon_i(f_i^K b) = \varepsilon_i(b) = 0$;
  \item $\varphi_i(f_i^K b) = \varphi_i(b) - 1$;
\end{enumerate}
\end{enumerate}
Moreover, from the examples, there appears to be some additional local structure that the (K-)crystal operators satisfy (see also the proof of Proposition~\ref{prop:demazure_strings}). It would be interesting to determine if there is a K-theory analog of the Stembridge axioms~\cite{Stembridge03}.

One other potential approach (of independent interest) would be to use dual (symmetric) Grothendieck polynomials, which form the basis that is dual to the Grothendieck polynomials under the Hall inner product. One could develop a \emph{dual} K-crystal structure on reverse plane partitions, which are used to describe the dual Grothendieck polynomials~\cite{LamPyl07}. Indeed, there is a $U_q(\fsl_n)$-crystal structure on reverse plane partitions given by P.~Galashin~\cite{Galashin17}; this crystal has the same flavor as the $U_q(\fsl_n)$-crystal structure on set-valued tableaux. From there, the structure coefficients $C_{\lambda\mu}^{\nu}$ appear as the decomposition of a skew dual Grothendieck polynomial into dual Grothendieck polynomials~\cite{LMS16} (note that the notion of Yamanouchi set-valued tableaux in~\cite{LMS16} differs from our notion). The results in~\cite{KM17} may also provide some additional insight into this problem.

\appendix
\section{Calculations using \textsc{SageMath}}
\label{sec:sage}

We give some code using \textsc{SageMath}~\cite{sage} that we used to compute examples.
Our full \textsc{SageMath} code can be found at~\cite{code}.

We first give our helper functions. We denote the value $\beta$ by $q$.

\begin{lstlisting}
def construct_coordinates(n, var='x'):
    S = PolynomialRing(ZZ, 'q').fraction_field()
    R = PolynomialRing(S, [var+str(i) for i in range(1,n+2)])
    return R
\end{lstlisting}
\begin{lstlisting}
def partial(f, i):
    if isinstance(i, (list, tuple)):
        ret = f
        for k in reversed(i):
            ret = partial(ret, k, beta)
        return ret
    R = f.parent()
    names = R.variable_names()
    g = R.gens()
    fs = f.subs(**{names[i-1]: g[i], names[i]: g[i-1]})
    return (f - fs) // (g[i-1] - g[i])
\end{lstlisting}
\begin{lstlisting}
def pi(f, i):   # The Demazure operator
    if isinstance(i, (list, tuple)):
        ret = f
        for k in reversed(i):
            ret = pi(ret, k, beta)
        return ret
    return partial(R.gen(i-1) * f, i)
\end{lstlisting}
\begin{lstlisting}
def DL(f, i, beta):   # The Demazure-Lascoux operator
    if isinstance(i, (list, tuple)):
        ret = f
        for k in reversed(i):
            ret = DL(ret, k, beta)
        return ret
    return pi((1 + beta * R.gen(i)) * f, i)
\end{lstlisting}

We perform a sample computation of $\varpi_i(x_1^3 x_2)$ for $i = 1,2,3$:

\begin{lstlisting}
sage: R = construct_coordinates(4)
sage: R.inject_variables()
Defining x1, x2, x3, x4, x5
sage: q = R.base_ring().gen()
sage: DL(x1^3*x2, 1, q)
q*x1^3*x2^2 + q*x1^2*x2^3 + x1^3*x2 + x1^2*x2^2 + x1*x2^3
sage: DL(x1^3*x2, 2, q)
q*x1^3*x2*x3 + x1^3*x2 + x1^3*x3
sage: DL(x1^3*x2, 3, q)
x1^3*x2
\end{lstlisting}

Next, we construct $\varpi_w(x_1^2 x_2)$ for $w = s_1 s_2, s_2 s_1$:

\begin{lstlisting}
sage: DL(x1^2*x2, [2,1], q)
q^2*x1^2*x2^2*x3 + q^2*x1^2*x2*x3^2 + q*x1^2*x2^2 + 2*q*x1^2*x2*x3
 + q*x1*x2^2*x3 + q*x1^2*x3^2 + q*x1*x2*x3^2 + x1^2*x2 + x1*x2^2
 + x1^2*x3 + x1*x2*x3 + x1*x3^2
sage: DL(x1^2*x2, [1,2], q)
q^2*x1^2*x2^2*x3 + q*x1^2*x2^2 + 2*q*x1^2*x2*x3 + 2*q*x1*x2^2*x3
 + x1^2*x2 + x1*x2^2 + x1^2*x3 + x1*x2*x3 + x2^2*x3
\end{lstlisting}

After loading our code into \textsc{SageMath}, we can compute the Buch insertion to verify Table~\ref{table:K_jdt_ex} agrees with insertion in~\cite{Buch02}.

\begin{lstlisting}
sage: L = [[{1}],[{2}],[{3}],[{1,2}],[{1,3}],[{2,3}],[{1,2,3}]]
sage: ascii_art([[buch_insertion(b[0], [a]) for a in L] for b in L])
\end{lstlisting}

\bibliographystyle{alpha}
\bibliography{crystals}{}
\end{document}